\def\phi{\varphi}
\def\e{\varepsilon}
\def\eps{\epsilon}
\def\bi{\begin{itemize}}
\def\ei{\end{itemize}}
\newcommand{\oK}{\overline{K}}
\newcommand{\oP}{\overline{P}}
\newcommand{\ov}{\overline{v}}
\renewcommand{\qed}{{\flushright{\hfill \rule{2mm}{2mm}}}}
\newcommand{\fe}{{\mathfrak e}}
\newcommand{\wt}{\widetilde}
\newcommand{\wh}{\widehat}
\newcommand{\Ker}{{\mathrm {Ker}}}
\newcommand{\Coker}{\mathrm {Coker}}
\newcommand{\Int}{{\mathrm {Int\,}}}
\newcommand{\Id}{{\mathrm {Id}}}
\newcommand{\Z}{{\mathbb Z}}
\newcommand{\R}{\mathbb {R}}
\newcommand{\Q}{\mathbb {Q}\,}
\newcommand{\C}{\mathbb {C}\,}
\newcommand{\Fib}{\textsf{Fib}}
\newcommand{\FFib}{\textsf{FFib}}
\newcommand{\Fold}{\textsf{Fold}}
\newcommand{\p}{{\partial}}
\newcommand{\Ver}{\mathrm{Vert}}
\newcommand{\ppxx}{\frac{\partial}{\partial x_1}}
\newcommand{\ppxk}{\frac{\partial}{\partial x_k}}
\newcommand{\ppxkk}{\frac{\partial}{\partial x_{k+1}}}
\newcommand{\ppxdd}{\frac{\partial}{\partial x_{d+1}}}
\newcommand{\Tinf}{T_\infty}
\newcommand{\oTinf}{\overline T_\infty}
\newcommand{\oV}{\overline V}
\newcommand{\oC}{\overline C}
\newtheorem{theorem}{Theorem}[section]
\newtheorem{corollary}[theorem]{Corollary}
\newtheorem{construction}[theorem]{Construction}
\newtheorem{lemma}[theorem]{Lemma}
\newtheorem{proposition}[theorem]{Proposition}
\newtheorem{example}[theorem]{Example}
\newtheorem{remark}[theorem]{Remark}
\newtheorem{definition}[theorem]{Definition}
\def\proclaim #1. #2\par{\medbreak
  \noindent{\bf#1.\enspace}{\sl#2\par}%
   \ifdim\lastskip<\medskipamount \removelastskip\penalty55
        \medskip\fi}
\def\R{\mathbb{R}}
\def\bbR{\mathbb{R}}
\def\Z{\mathbb{Z}}
\def\cL{{\cal L}}
\def\oZ{\overline{Z}}
\def\Op{{\mathcal O}{\it p}\,}
\def\mn{\medskip\noindent}
\newcommand{\ff}{\mathfrak{f}}
\newcommand{\Emb}{\mathrm{Emb}}
\newcommand{\Diff}{\mathrm{Diff}}
\newcommand{\Cone}{\mathrm{Cone}}
\newcommand{\Span}{\mathrm{Span}}
\begin{document}
\title{Madsen-Weiss for geometrically minded topologists}

\author{Yakov Eliashberg
\footnote{Partially supported by NSF grants DMS-0707103 and DMS 0244663}
\\Stanford University
\\ USA
\and
S{\o}ren Galatius
\footnote{Partially supported by NSF grant DMS-0805843 and Clay Research Fellowship}
\\ Stanford University
\\USA
\and
Nikolai Mishachev
\footnote{Partially supported by  NSF grant DMS 0244663}
\\Lipetsk Technical University
\\Russia\\
 {\small  To D.B. Fuchs on his 70th birthday}}
\maketitle

\begin{abstract}
  We give an alternative proof of Madsen-Weiss' \emph{generalized
    Mumford conjecture}.  Our proof is based on ideas similar to
  Madsen-Weiss' original proof, but it is more geometrical and less
  homotopy theoretical in nature.  At the heart of the argument is a
  geometric version of \emph{Harer stability}, which we formulate as a
  theorem about folded maps.
\end{abstract}
\tableofcontents
  %\end{document}

\section{Introduction and statement of results}\label{intro}

Our main theorem gives a relation between \emph{fibrations} (or
\emph{surface bundles}) and a related notion of \emph{formal
  fibrations}.  By a fibration we shall mean a smooth map $f: M \to
X$, where $M$ and $X$ are smooth, oriented, compact manifolds and $f$
is a submersion (i.e.\ $df: TM \to f^* TX$ is surjective).  A
cobordism between two fibrations $f_0: M_0 \to X_0$ and $f_1: M_1 \to
X_1$ is a triple $(W,Y,F)$ where $W$ is a cobordism from $M_0$ to
$M_1$, $Y$ is a cobordism from $X_0$ to $X_1$, and $F: W \to Y$ is a
submersion which extends $f_0 \amalg f_1$.

\begin{definition}\label{defn:formal-fib}
  \begin{enumerate}[(i)]
  \item An \emph{unstable formal fibration} is a pair $(f,\phi)$,
    where $f: M \to X$ is a smooth proper map, and $\phi: TM \to f^*
    TX$ is a bundle epimorphism.
  \item A \emph{stable formal fibration} (henceforth just a formal
    fibration) is a pair $(f,\phi)$, where $f$ is as before, but
    $\phi$ is defined only as a \emph{stable} bundle map.  Thus for
    large enough $j$ there is given an epimorphism $\phi: TM \oplus
    \epsilon^j \to TX \oplus \epsilon^j$, and we identify $\phi$ with
    its stabilization $\phi \oplus \epsilon^1$.  A cobordism between
    formal fibrations $(f_0,\phi_0)$ and $(f_1, \phi_1)$ is a
    quadruple $(W,Y,F,\Phi)$ which restricts to $(f_0,\phi_0) \amalg
    (f_1, \phi_1)$.
%   {\rm (There is no good notion of cobordism of
%       unstable formal fibrations.)}
  \item The formal fibration induced by a fibration $f: M \to X$ is
    the pair $(f, df)$, and a formal fibration is \emph{integrable} if
    it is of this form.
  \end{enumerate}
\end{definition}

Our main theorem relates the set of cobordism classes of fibrations 
with the set of cobordism classes of formal fibrations.  Let us first
discuss the \emph{stabilization} process (or more precisely
``stabilization with respect to genus''.  This should not be confused
with the use of ``stable'' in ``stable formal fibration''.  In the
former, ``stabilization'' refers to increasing genus; in the latter it
refers to increasing the dimension of vector bundles.)  Suppose $f: M
\to X$ is a formal fibration (we will often suppress the bundle
epimorphism $\phi$ from the notation) and $j : X \times D^2 \to M$ is
an embedding over $X$ (i.e. $f\circ j=\Id:X\to X$), such that $f$ is
integrable on the image of $j$.  Then we can \emph{stabilize} $f$ by
taking the fiberwise connected sum of $M$ with $X \times T$ (along
$j$), where $T = S^1 \times S^1$ is the torus.  If $f$ happens to be
integrable, this process increases the genus of each fiber by 1.

% First we notice that the notion of \emph{genus} of a surface
% generalize to formal fibrations.  Indeed, the kernel of $\phi$ is an
% oriented 2-dimensional bundle $L \to M$, and the genus of $(f,\phi)$
% is defined by the equation $2-2g = f_!(e(L)) \in H^0(X)$.  Here $f_!:
% H^2(M) \to H^0(X)$ denotes integration along the fiber.  If $X$ is
% connected, $g$ is just a number (we will see that it is always an
% integer.  If $f$ is integrable and has connected fibers, $g$ is the
% ``usual'' genus of the fiber.)

Our main theorem is the following.
\begin{theorem}\label{thm:main0}
  Let $f:M \to X$ be a formal fibration which is integrable over the
  image of a fiberwise embedding $j: X \times D^2 \to M$.  Then, after
  possibly stabilizing a finite number of times, $f$ is cobordant to
  an integrable fibration with connected fibers.
\end{theorem}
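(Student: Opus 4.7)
The plan is to proceed in three stages. First I would convert the formal fibration into an honest smooth map whose only singularities are folds. Second, I would use the integrable disk $j(X \times D^2) \subset M$ together with repeated genus stabilization to eliminate the fold locus, obtaining a genuine fibration. Finally, I would perform additional surgery/stabilization to ensure the fibers are connected.

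\textbf{Stage 1: Reduction to folded maps via the $h$-principle.} Using the $h$-principle for maps with prescribed singularities (Eliashberg, in parametric form), every formal fibration $(f,\phi)$ is homotopic through formal fibrations to the formal data of a map $g: M \to X$ which is a submersion away from a codimension-one fold locus $\Sigma \subset M$. Near $\Sigma$, the map $g$ is locally of the form $(x,y_1,\dots,y_k) \mapsto (x, y_1, \dots, y_{k-1}, \pm y_k^2)$, where $k = \dim M - \dim X$. I would carry out this step relative to the integrable neighborhood $j(X \times D^2)$ so that $g$ agrees with $f$ there and so that $\Sigma$ is disjoint from this neighborhood.

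\textbf{Stage 2: Elimination of folds by genus stabilization.} This is the heart of the argument, the \emph{geometric Harer stability}. Near each component of $\Sigma$, the map $g$ presents the fiber on one side as obtained from the fiber on the other by a handle attachment. By taking fiberwise connected sums with copies of $X \times T$ along $j$, I increase the genus of every fiber uniformly, providing ``room'' to slide fold arcs around and cancel pairs of folds of complementary indices via a parametric Cerf-theoretic argument. After reducing all folds to a single index (by births and exchanges) and pairing them off against the extra handles supplied by the stabilizations, $\Sigma$ can be made empty, producing a genuine fibration cobordant to $g$.

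\textbf{Stage 3: Connecting the fibers.} Once we have an honest fibration, the fibers may still be disconnected, but $j(X \times D^2)$ singles out a distinguished component in each fiber. Each extra component can be joined to the distinguished one by an ambient $1$-handle surgery performed continuously over $X$, which is realized as a further cobordism. A final batch of stabilizations absorbs the resulting change in genus, and we arrive at an integrable fibration with connected fibers that is cobordant to the original $f$.

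The hard part will clearly be Stage 2. One must show that the global topology of $\Sigma \subset M$ does not obstruct cancellation once enough genus has been added to the fibers, and that the cancellations can be performed coherently in a family over $X$. This requires an inductive procedure ordering the folds by index, a parametrized version of Cerf's theorem to realize the births and cancellations continuously, and a bookkeeping argument showing that the number of required stabilizations stays finite. Stages 1 and 3 are relatively formal consequences of the $h$-principle and surgery theory; the stabilized fold-cancellation of Stage 2 is where the essential Madsen-Weiss content resides in this geometric incarnation.
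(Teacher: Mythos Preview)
Your three-stage outline is broadly in the right spirit, but Stage~2 contains a genuine gap and Stage~1 has an error in the local model.

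First the minor point: your normal form $(x,y_1,\dots,y_{k-1},\pm y_k^2)$ is the fold model for equidimensional maps. Here $\dim M = \dim X + 2$, so a fold has a three-dimensional kernel and the local model is $(y,x)\mapsto (y,\, -\sum_1^s x_i^2 + \sum_{s+1}^3 x_i^2)$, with index $s\in\{0,1,2,3\}$. The distinction between \emph{elliptic} folds ($s=0,3$) and \emph{hyperbolic} folds ($s=1,2$) matters: elliptic ones cannot be removed by homotopy and are traded for hyperbolic ones by an explicit bordism, and one must also arrange connected fibers \emph{before} attacking the hyperbolic folds, since the Harer-type argument needs connected complements.

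The real gap is your mechanism in Stage~2. You propose to ``pair folds off against the extra handles supplied by stabilizations'' via parametric Cerf theory. This does not work: eliminating a hyperbolic fold $C$ with image $\oC=\partial P\subset X$ is not a cancellation of adjacent critical points along a homotopy, it is a fiberwise \emph{surgery} over $P$. For an index-$1$ fold, say, one needs two disjoint framed sections $s_\pm:P\to M$ extending the vanishing $0$-cycle from a collar of $\partial P$; for index~$2$, an embedded annulus extending the vanishing circle. Such a \emph{surgery basis} need not exist no matter how much genus you add, and Cerf theory says nothing about producing it. What Harer stability actually buys you is this: the inclusion $B\Diff(F)\to B\Diff(F')$ is a homology (hence bordism) isomorphism in a range, which translates geometrically into the statement that after replacing $f$ by a \emph{bordant} map, the partial section data over $\partial P$ extends over $P$. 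The paper packages this via an auxiliary structure on the folded map (membranes with framings, an ``enrichment'') that both records the formal-fibration data and supplies the germ of the surgery basis along $\partial P$; Harer stability then furnishes the extension over $P$ after a bordism, and an induction over the self-intersection strata of the fold image handles the global picture. Your plan is missing this enrichment structure and mischaracterizes the role of stabilization: high genus does not give ``room to slide folds''; rather, working with $T_\infty$-ends puts you in the stable range where Harer's theorem applies, and the theorem is invoked to produce surgery data after a bordism, not to cancel folds by homotopy.
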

We will also prove a relative version of the theorem.  Namely, if $X$
has boundary and $f$ is already integrable, with connected fibers,
over a neighborhood of $\partial X$, then the cobordism in the theorem
can be also be assumed integrable over a neighborhood of the boundary.

Theorem~\ref{thm:main0} is equivalent to Madsen-Weiss' ``generalized
Mumford conjecture'' \cite{MW07} which states that a certain map
\begin{align}\label{eq:1}
  \Z \times B \Gamma_\infty \to \Omega^\infty \C P^\infty_{-1}
\end{align}
induces an isomorphism in integral homology.  In the rest of this
introduction we will explain the equivalence and introduce the methods
that go into our proof of Theorem~\ref{thm:main0}.  Our proof is
somewhat similar in ideas to the original proof, but quite different
in details and language.  The general scheme of reduction of some
algebro-topological problem to a problem of existence of bordisms
between formal and genuine (integrable) fibrations was first proposed
by D.\ B.\ Fuchs in \cite{Fu74}. That one might prove Madsen-Weiss'
theorem in the form of Theorem~\ref{thm:main0} was also suggested by
I.\ Madsen and U.\ Tillmann in \cite{MT01}.

\subsection{Diffeomorphism groups and mapping class groups}
\label{sec:diff-groups-mapp}

Let $F$ be a compact oriented surface, possibly with boundary
$\partial F = S$.  Let $\Diff(F)$ denote the topological group of
diffeomorphisms of $F$ which restrict to the identity on the boundary.
The classifying space $B\Diff(F)$ can be defined as the orbit space
\begin{align*}
  B\Diff(F) = \Emb(F,\R^\infty)/\Diff(F),
\end{align*}
and it is a classifying space for fibrations: for a manifold $X$,
there is a natural bijection between isomorphism classes of smooth
surface bundles $E \to X$ with fiber $F$ and trivialized boundary
$\partial E = X \times S$, and homotopy classes of maps $X \to
B\Diff(F)$.

The \emph{mapping class group} is defined as $\Gamma(F) = \pi_0
\Diff(F)$, i.e.\ the group of isotopy classes of diffeomorphisms of
the surface.  It is known that the identity component of $\Diff(F)$ is
contractible (as long as $g \geq 2$ or $\partial F \neq \varnothing$),
so $B\Diff(F)$ is also a classifying space for $\Gamma(F)$ (i.e.\ an
Eilenberg-Mac Lane space $K(\Gamma(F),1)$).  When $\partial F =
\varnothing$, this is also related to the \emph{moduli space} of Riemann
surfaces (i.e.\ the space of isomorphism classes of Riemann surfaces
of genus $g$) via a map
\begin{align}\label{eq:5}
  B \Diff(F) \to \mathcal{M}_g
\end{align}
which induces an isomorphism in rational homology and cohomology.
Mumford defined characteristic classes $\kappa_i \in
H^{2i}(\mathcal{M}_g;\Q)$ for $i\geq 1$ and conjectured that the
resulting map
\begin{align*}
  \Q[\kappa_1, \kappa_2, \dots] \to H^*(\mathcal{M}_g;\Q)
\end{align*}
is an isomorphism in degrees less than $(g-1)/2$.  This is the
original \emph{Mumford Conjecture}. 

It is convenient to take the limit $g \to \infty$.  Geometrically,
that can be interpreted as follows.  Pick a surface $F_{g,1}$ of genus
$g$ and with one boundary component.  Also pick an inclusion $F_{g,1}
\to F_{g+1,1}$.  Let $T_\infty$ be the union of the $F_{g,1}$ over all
$g$, i.e.\ a countably infinite connected sum of tori.  We will
consider fibrations $E \to X$ with trivialized ``$T_\infty$ ends''.

\begin{figure}[hi]
%\centerline{\psfig{figure=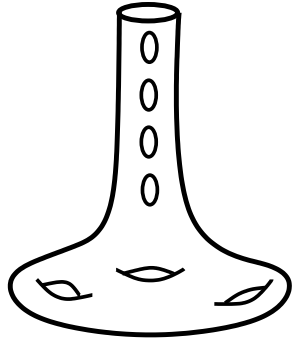,height=30mm}}
\centerline{\includegraphics[height=45mm]{mw5.eps}}
\caption{Surface with $\Tinf$-end}
\label{mw5}
\end{figure}

We are considering pairs $(f,j)$ where $f: E \to X$ is a smooth fiber
bundle with fiber $T_\infty$, and
\begin{align}\label{eq:6}
  j: X \times T_\infty \leadsto E
\end{align}
is a \emph{germ at infinity} of an embedding over $X$.  This means,
for $X$ compact, that representatives of $j$ are defined on the
complement of some compact set in $X \times T_\infty$, and their
images contain the complement of some compact set in $E$.

Let us describe a classifying space for fibrations with $T_\infty$
ends.  Let $B \Gamma_\infty$ be the the mapping telescope (alias
\emph{homotopy colimit}) of the direct system
\begin{align}\label{eq:4}
  B \Diff(F_{0,1}) \to B\Diff(F_{1,1}) \to B\Diff(F_{2,1}) \to \dots.
\end{align}
\begin{lemma}
  $\Z \times B\Gamma_\infty$ is a classifying space for fibrations
  with $T_\infty$ ends.
\end{lemma}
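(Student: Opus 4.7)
The plan is to establish mutually inverse natural bijections
\begin{align*}
  [X, \Z \times B\Gamma_\infty] \longleftrightarrow \{\text{iso.\ classes of fibrations over $X$ with $\Tinf$ ends}\}
\end{align*}
for $X$ a finite CW complex (the general case follows from compatibility with skeletal filtrations). The geometric idea is that a $\Tinf$-end fibration $(f: E \to X, j)$ is essentially the data of an $F_{h,1}$-bundle $E' \to X$ with trivialized boundary $X \times S^1$, together with a trivial tail $X \times (\Tinf \setminus \mathring{F}_{g,1})$ glued along this boundary. The ambiguity of where to cut off the tail is exactly the stabilization $B\Diff(F_{h,1}) \to B\Diff(F_{h+1,1})$, and the residual invariant is the integer $n = h - g$.

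From fibration to map: given $(f, j)$, by compactness of $X$ the germ $j$ has a representative defined on $X \times (\Tinf \setminus \mathring{F}_{g,1})$ for some $g \geq 0$, whose image is the complement of a compact subbundle $E'_g \to X$ with fiber $F_{h,1}$ and boundary trivialized as $X \times \partial F_{g,1}$ via $j$. The standard classification of surface bundles yields a map $\alpha_g : X \to B\Diff(F_{h,1})$. Replacing $g$ by $g+1$ enlarges the core by gluing in $j(X \times (F_{g+1,1} \setminus \mathring{F}_{g,1}))$, which is a fiberwise boundary connect sum with a torus-with-boundary; this is by definition the stabilization map. Hence the $\alpha_g$ assemble into a well-defined homotopy class $\alpha : X \to B\Gamma_\infty$, and $h - g$ is independent of the choice of $g$, locally constant on $X$, and records the component in $\Z \times B\Gamma_\infty$.

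From map to fibration: since $X$ is finite-dimensional, any map $X \to \{n\} \times B\Gamma_\infty$ factors up to homotopy through some $B\Diff(F_{h,1})$; after stabilizing we may assume $g := h - n \geq 0$. This classifies an $F_{h,1}$-bundle $E' \to X$ with trivialized boundary $X \times \partial F_{g,1}$, and we glue on $X \times (\Tinf \setminus \mathring{F}_{g,1})$ along the boundary to produce a fibration $E \to X$ with fiber $\Tinf$, the tail inclusion providing the germ $j$ at infinity.

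The main obstacle is verifying mutual inverseness and independence of all auxiliary choices: one must check that different representatives of the germ and different cutoff parameters $g$ produce isomorphic fibrations (respectively homotopic maps), that the two procedures compose to the identity on both sides, and that the integer component is consistently defined. This is essentially bookkeeping; the only point of substance is the identification of the geometric boundary connect sum operation with the abstract stabilization map in the telescope \eqref{eq:4}, which is immediate from the definition of the latter as extension of diffeomorphisms by the identity on a torus-with-boundary.
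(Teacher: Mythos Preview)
Your proof is correct and follows essentially the same approach as the paper's: cut off the trivialized $\Tinf$-tail at some finite level, identify the compact core as a surface bundle with trivialized boundary classified by the appropriate $B\Diff(F_{h,1})$, and show that changing the cutoff level corresponds exactly to the stabilization map in the telescope. The paper phrases this slightly more abstractly---it introduces the notion of a ``$k$-trivialized'' bundle, observes that the classifying space $B = \coprod_g B\Diff(F_{g,1})$ is independent of $k$, and then identifies the homotopy colimit of the self-maps $s: B \to B$ with $\Z \times B\Gamma_\infty$---whereas you construct the bijection by hand and track the integer $h-g$ explicitly; but this is a difference of presentation rather than substance.
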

\begin{proof}
  Any compact $K \subset T_\infty$ is contained in $F_{g,1}
  \subset T_\infty$ for some finite $g$.  Let $T_{\infty}^g
  \subset T_\infty$ be the complement of $F_{g,1}$.  Let us consider
  for a moment fibrations with fiber $T_\infty$ and an embedding as in
  (\ref{eq:6}), but which is actually defined on $T_\infty^k$.  Call
  such bundles \emph{$k$-trivialized}.  Specifying a $k$-trivialized
  bundle is the same thing as specifying a fibration $E' \to X$ with
  connected, compact fibers, and trivialized boundary $\partial E' = X
  \times S^1$ (namely $E'$ is the complement of the image of $j$).
  Thus, the disjoint union
  \begin{align}\label{eq:7}
    B = \coprod_g B \Diff(F_{g,1})
  \end{align}
  is a classifying space for $k$-trivialized bundles (notice $B$ is
  independent of $k$).  A $k$-trivialized bundle is also a
  $(k+1)$-trivialized bundle, and increasing $k$ is represented by a
  ``stabilization'' self-map $s: B \to B$.  In the
  representation~\eqref{eq:7}, $s$ maps $B\Diff(F_{g,1})$ to
  $B\Diff(F_{g+1,1})$ and this is induced by the same map as
  in~\eqref{eq:4}.

  Now the statement follows
  by taking the direct limit as $k \to \infty$, and noticing that $\Z
  \times B\Gamma_\infty$ is the homotopy colimit of the direct system
  \begin{align*}
    B \xrightarrow{s} B \xrightarrow{s} \dots.
  \end{align*}
\end{proof}
 
\subsection{A Thom spectrum}\label{sec:thom-spectrum}
In this section we define a space $\Omega^\infty \C P_{-1}^\infty$ and
interpret it as a classifying space for formal fibrations.  The
forgetful functor from fibrations to formal fibrations is represented
by a map
\begin{align}\label{eq:17}
  B\Diff(F) \to \Omega^\infty \C P_{-1}^\infty.
\end{align}
We then consider the same situation, but where fibrations and formal
fibrations have $T_\infty$ ends.  This changes the source of the
map~\eqref{eq:17} to $\Z \times B\Gamma_\infty$, but turns out to not
change the homotopy type of the target.  We get a map
\begin{align*}
  \Z \times B\Gamma_\infty \to \Omega^\infty \C P_{-1}^\infty,
\end{align*}
representing the forgetful functor from fibrations with $T_\infty$
ends to formal fibrations with $T_\infty$ ends.

The space $\Omega^\infty \C P_{-1}^\infty$ is defined as the Thom
spectrum of the negative of the canonical complex line bundle over $\C
P^\infty$.  In more detail, let $\mathrm{Gr}_2^+(\R^N)$ be the
Grassmannian manifold of oriented 2-planes in $\R^N$.  It supports a
canonical 2-dimensional vector bundle $\gamma_{N}$ with an
$(N-2)$-dimensional complement $\gamma_{N}^\perp$ such that $\gamma_N
\oplus \gamma_{N}^\perp = \epsilon^{N}$.  There is a canonical
identification
\begin{align}\label{eq:2}
  \gamma_{N+1}^\perp | \mathrm{Gr}_2^+(\R^N) = \gamma_N^\perp \oplus
  \epsilon^1.
\end{align}
The Thom space $\mathrm{Th}(\gamma_N^\perp)$ is defined as the
one-point compactification of the total space of $\gamma_N^\perp$, and
the identification~(\ref{eq:2}) induces a map $S^1 \wedge
\mathrm{Th}(\gamma_N) \to \mathrm{Th}(\gamma_{N+1})$.  The space
$\Omega^\infty \C P^\infty_{-1}$ is defined as the direct limit
\begin{align*}
  \Omega^\infty \C P^\infty_{-1} = \lim_{N \to \infty} \Omega^N
  \mathrm{Th}(\gamma_N^\perp).
\end{align*}

Like we did for $B\Diff(F)$, we shall think of $\Omega^\infty \C
P^\infty_{-1}$ as a classifying space, i.e.\ interpret homotopy
classes of maps $X \to \Omega^\infty \C P^\infty_{-1}$ from a smooth
manifold $X$ in terms of certain geometric objects over $X$.  Recall
the notion of \emph{formal fibration} from
Definition~\ref{defn:formal-fib} above.  A cobordism $(W,Y,F,\Phi)$ of
formal fibrations is a \emph{concordance} if the target cobordism is a
cylinder: $Y = X \times [0,1]$.
\begin{lemma}\label{lemma:PT1}
  There is a natural bijection between set
  \begin{align*}
    [X, \Omega^\infty \C P^\infty_{-1}]
  \end{align*}
  of homotopy classes of maps, and the set of concordance classes of
  formal fibrations over $X$.
\end{lemma}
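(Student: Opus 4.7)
The plan is to prove this by the Pontryagin--Thom construction in a form adapted to the virtual rank $-2$ setting: the space $\Omega^\infty \mathbb{C} P^\infty_{-1}$ is designed precisely so that transverse preimages of the zero section of $\gamma_N^\perp$ are codimension $N-2$ submanifolds, i.e., manifolds of dimension $\dim X + 2$, endowed with just the right stable tangential data.

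First I would construct the map from $[X,\Omega^\infty \mathbb{C} P^\infty_{-1}]$ to concordance classes of formal fibrations. Represent a homotopy class by a map $g: X \to \Omega^N \mathrm{Th}(\gamma_N^\perp)$ for some large $N$ and form its adjoint $\widehat g: X \times \mathbb{R}^N \to \mathrm{Th}(\gamma_N^\perp)$, which is constant at the basepoint outside a compact set. Smooth $\widehat g$ and make it transverse to the zero section $\mathrm{Gr}_2^+(\mathbb{R}^N)$; the preimage $M \subset X \times \mathbb{R}^N$ is a smooth compact submanifold of codimension $N-2$. Let $f : M \to X$ be the projection (proper because $M$ is compact) and let $\pi : M \to \mathrm{Gr}_2^+(\mathbb{R}^N)$ be the restricted Gauss map. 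The derivative of $\widehat g$ identifies the normal bundle of $M$ with $\pi^*\gamma_N^\perp$, and combining this with the tautological identity $\gamma_N \oplus \gamma_N^\perp = \epsilon^N$ produces a stable isomorphism $TM \oplus \epsilon^N \cong f^*TX \oplus \pi^*\gamma_N \oplus \epsilon^N$, hence a stable epimorphism $\phi : TM \oplus \epsilon^N \to f^*TX \oplus \epsilon^N$ whose stable kernel is the oriented rank-$2$ bundle $\pi^*\gamma_N$.

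For the inverse map, start with a formal fibration $(f,\phi)$ on $X$ with $f : M \to X$ proper. The stable kernel of $\phi$ is an oriented rank-$2$ bundle $K \to M$, and the splitting of $\phi$ gives $TM \oplus \epsilon^j \cong f^*TX \oplus K \oplus \epsilon^j$. Choose an embedding $M \hookrightarrow X \times \mathbb{R}^N$ over $X$ for $N$ large; its normal bundle $\nu$ satisfies $TM \oplus \nu \cong f^*TX \oplus \epsilon^N$, and combining with the splitting of $\phi$ forces $\nu \oplus K \cong \epsilon^N$, honestly so after increasing $N$. A choice of such trivialization is a Gauss map $\pi : M \to \mathrm{Gr}_2^+(\mathbb{R}^N)$ with $\pi^*\gamma_N = K$ and $\pi^*\gamma_N^\perp = \nu$. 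Collapsing a tubular neighborhood of $M$ in $X \times \mathbb{R}^N$ then yields a compactly supported map $X \times \mathbb{R}^N \to \mathrm{Th}(\nu) \to \mathrm{Th}(\gamma_N^\perp)$, whose adjoint gives the desired element of $[X, \Omega^N \mathrm{Th}(\gamma_N^\perp)] \to [X, \Omega^\infty \mathbb{C} P^\infty_{-1}]$.

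Finally I would verify that the two constructions are mutually inverse up to concordance, and that they descend to concordance/homotopy classes. Both implications follow from the same two constructions applied to a homotopy (respectively concordance) over $X \times [0,1]$, plus standard transversality: independence from the smoothing and transverse approximation of $\widehat g$, from the choice of proper embedding $M \hookrightarrow X \times \mathbb{R}^N$ and its tubular neighborhood, and from the stabilization parameters $N$ and $j$. The main obstacle is the bookkeeping in the stable regime: one must keep track of the rank-$2$ kernel $K$, orient it, promote stable isomorphisms to honest ones after enlarging $N$, and verify that enlarging $N$ corresponds on the spectrum side to the structure maps of $\mathbb{C} P^\infty_{-1}$. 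These manoeuvres are routine in Pontryagin--Thom theory but constitute the technical heart of the lemma.
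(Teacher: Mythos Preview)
Your proposal is correct and follows essentially the same Pontryagin--Thom approach as the paper: adjoint, transversality to the zero section of $\gamma_N^\perp$ to produce $M$ with the right stable tangential data in one direction, and an embedding $M\hookrightarrow X\times\R^N$ over $X$ followed by the collapse map and a Gauss map identifying $\nu$ with $\xi^*\gamma_N^\perp$ in the other. The only cosmetic difference is that the paper reaches the unstable isomorphism $\nu\cong\xi^*\gamma_N^\perp$ by adding $\xi^*\gamma_N^\perp$ to both sides of $\xi^*\gamma_N\oplus\nu\cong_{\mathrm{st}}\epsilon^N$, whereas you phrase it directly as the trivialization $\nu\oplus K\cong\epsilon^N$ giving the Gauss map; these are the same step.
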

\begin{proof}[Proof sketch]
  This is the standard argument of Pontryagin-Thom theory.  In one
  direction, given a map $X \to \Omega^N \mathrm{Th}(\gamma_N^\perp)$,
  one makes the adjoint map $g: X \times \R^N \to
  \mathrm{Th}(\gamma_N^\perp)$ transverse to the zero section of
  $\gamma_N^\perp$ and sets $M = g^{-1}(\text{zero-section})$.  Then
  $M$ comes with a map $c:M \to \mathrm{Gr}_2^+(\R^N)$ and the normal
  bundle of $M \subset X \times \R^N$ is $c^*(\gamma_N^\perp)$.  This
  gives a stable isomorphism $TM \cong_{\mathrm{st}} TX \oplus
  c^*(\gamma_N)$ and hence a stable epimorphism $TM \to TX$.

  In the other direction, given a formal fibration $(f,\phi)$ with $f:
  M \to X$, we pick an embedding $M \subset X \times \R^N$.  Letting
  $\nu$ be the normal bundle of this embedding, we get a ``collapse''
  map
  \begin{align}
    \label{eq:10}
    X_+ \wedge S^N \to \mathrm{Th}(\nu).
  \end{align}
  We also get an isomorphism of vector bundles over $M$
  \begin{align}\label{eq:8}
    TM\oplus \nu \cong f^* TX \oplus \epsilon^N.
  \end{align}
  Let $\xi: M \to \mathrm{Gr}_2^+(\R^N)$ be a classifying map for the
  kernel of the stable epimorphism $\phi: TM \to TX$ (this is a
  two-dimensional vector bundle with orientation induced by the
  orientations of $X$ and $M$), so we have a stable isomorphism $TM
  \cong_{\mathrm{st}} TX \oplus \xi^* \gamma_N$.  Combining
  with~(\ref{eq:8}) we get a stable isomorphism $\xi^* \gamma_N \oplus
  \nu \cong_{\mathrm{st}} \epsilon^N$.  By adding
  $\xi^*(\gamma_N^\perp)$ we get a stable isomorphism $\nu \cong_{st}
  \xi^*(\gamma_N^\perp) \oplus \epsilon^N$ which we can assume is
  induced by an unstable isomorphism (since we can assume $N \gg \dim
  M$)
  \begin{align}
    \label{eq:9}
    \nu \cong \xi^* \gamma^\perp_N.
  \end{align}
  This gives a proper map $\nu \to \gamma_N^\perp$ and hence a map of
  Thom spaces $\mathrm{Th}(\nu) \to \mathrm{Th}(\gamma^\perp_N)$.
  Compose with (\ref{eq:10}) and take the adjoint to get a map $X \to
  \Omega^N \mathrm{Th}(\gamma^\perp_N)$.  Finally let $N \to \infty$
  to get a map $X \to \Omega^\infty \C P^\infty_{-1}$.

  The homotopy class of the resulting map $X \to \Omega^\infty \C
  P^\infty_{-1}$ is well defined and depends only on the concordance
  class of the formal fibration $f: M \to X$.
\end{proof}

A fibration naturally gives rise to a formal fibration, and this
association gives rise to a map of classifying spaces which is the
map~(\ref{eq:5}).  We would like to make this process compatible with
the stabilization procedure explained in
section~\ref{sec:diff-groups-mapp} above.  To this end we consider
formal fibrations with $k$-trivialized $T_\infty$ ends.  This means
that $f: M \to X$ is equipped with an embedding over $X$
\begin{align*}
  j: X \times T_{\infty}^k \to M,
\end{align*}
and that $(f,\phi)$ is integrable on the image of $j$.  Of course, we
also replace the requirement that $M$ be compact by the requirement
that the complement of the image of $j$ be compact.
\begin{lemma}\label{lemma:PT2}
  Formal fibrations with $k$-trivialized ends are represented by the
  space $\Omega^\infty \C P^\infty_{-1}$.
\end{lemma}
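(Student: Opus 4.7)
The plan is to parametrize the Pontryagin--Thom argument of Lemma~\ref{lemma:PT1}, modifying it to accommodate the non-compact end $X \times \Tinf^k \subset M$. The key observation is that, because $\Tinf^k$ is an open oriented surface, its tangent bundle is trivial, so the contribution of the end to the Pontryagin--Thom collapse can be standardized and declared to represent the basepoint. Concretely, I would first fix once and for all a proper embedding $\iota_0: \Tinf^k \hookrightarrow \R^{N_0}$ together with a trivialization of $T\Tinf^k$; these data determine a based map $\tau_0: S^{N_0} \to \mathrm{Th}(\gamma_{N_0}^\perp)$, and we declare its class in $\Omega^\infty \C P^\infty_{-1}$ to be the basepoint.

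Given a $k$-trivialized formal fibration $(f: M\to X, \phi, j)$, in the forward direction I would choose an embedding $M \hookrightarrow X \times \R^N$ over $X$ with $N \gg N_0$, arranging it to restrict on the image of $j$ to $\mathrm{id}_X$ times (the stabilization of) $\iota_0$. Applying the Pontryagin--Thom construction of Lemma~\ref{lemma:PT1} verbatim yields a map $X_+ \wedge S^N \to \mathrm{Th}(\gamma_N^\perp)$ whose restriction to $\{x\} \wedge S^N$ agrees at infinity with $\tau_0$; subtracting $\tau_0$ using the infinite loop space structure produces a well-defined based map and hence a class in $[X, \Omega^\infty \C P^\infty_{-1}]$. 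In the reverse direction, given a class $[\alpha]: X \to \Omega^\infty \C P^\infty_{-1}$, Lemma~\ref{lemma:PT1} produces a formal fibration $(f', \phi')$ over $X$ with compact total space $M'$; I would then form the disjoint union $M = M' \sqcup (X \times \Tinf^k)$ equipped with the integrable structure on the second summand induced by $\iota_0$, yielding a $k$-trivialized formal fibration representing $[\alpha]$.

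The principal technical point---and the only substantive addition beyond the argument of Lemma~\ref{lemma:PT1}---is verifying that the above two constructions are mutually inverse on concordance classes and are independent of the auxiliary choice of $(\iota_0, \tau_0)$. This reduces to showing that the space of proper embeddings $\Tinf^k \hookrightarrow \R^\infty$ paired with tangential trivializations is path-connected, which follows from the fact that $\Tinf^k$ is a non-compact oriented surface (hence parallelizable, since $H^2(\Tinf^k) = 0$) and that the space of proper embeddings of any fixed manifold into $\R^\infty$ is contractible.
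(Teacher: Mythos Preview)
Your strategy—normalize the contribution of the standard end and then ``subtract'' it—is morally the right one, and it is close in spirit to what the paper does.  However, there is a genuine technical gap in the implementation.  You assert that a proper embedding $\iota_0: T_\infty^k \hookrightarrow \R^{N_0}$ determines a based map $\tau_0: S^{N_0} \to \mathrm{Th}(\gamma_{N_0}^\perp)$, and likewise that the Pontryagin--Thom construction applied to $M \hookrightarrow X \times \R^N$ yields a map $X_+ \wedge S^N \to \mathrm{Th}(\gamma_N^\perp)$.  Neither of these maps exists.  The collapse map $\R^N \to \mathrm{Th}(\gamma_N^\perp)$ extends to the one-point compactification $S^N$ only when the source manifold is compact: if $M$ is non-compact and properly embedded, points of $\R^N$ going to infinity inside the tubular neighborhood are mapped to points of $\gamma_N^\perp$ lying over the (non-convergent) Gauss map of the end of $M$, so the map does not extend continuously over $\infty$.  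The triviality of $T T_\infty^k$ makes the Gauss map nullhomotopic, but not constant, and that is not enough.  Consequently your ``element'' $\tau_0 \in \Omega^\infty \C P^\infty_{-1}$ is not defined, and there is nothing to subtract in the loop-space sense.

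The paper's fix is to exploit the one distinguished non-compact direction of $T_\infty$, the ``genus direction''.  One embeds $M \subset X \times [0,\infty) \times \R^{N-1}$ so that the end is standard over $[k,\infty)$.  For each fixed $s$ the slice of $M$ at height $s$ is compact, so the collapse restricted to $\{s\} \times \R^{N-1}$ \emph{does} extend over $\infty$ and gives an honest point $\alpha(s) \in \Omega^{N-1}\mathrm{Th}(\gamma_N^\perp)$.  One obtains a path $\alpha:[0,\infty) \to \Omega^{N-1}\mathrm{Th}(\gamma_N^\perp)$ whose restriction to $[k,\infty)$ is the fixed path $\alpha_0$ coming from the standard end, and the space of such paths is homotopy equivalent to $\Omega^N\mathrm{Th}(\gamma_N^\perp)$.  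This is exactly the ``relative to the end'' construction your subtraction is groping for, but carried out so that every object in sight is well defined.  If you rewrite your argument replacing ``based map $S^N \to \mathrm{Th}$'' by ``map $\R^N \to \mathrm{Th}$ with prescribed behaviour outside a compact set'', it becomes correct and essentially equivalent to the paper's.
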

\begin{proof}[Proof sketch]
  This is similar to the proof of Lemma~\ref{lemma:PT1} above.
  Applying the Pontryagin-Thom construction from the proof of that
  lemma to the projection $X \times T_\infty^k \to X$ gives a path
  $\alpha_0: [k,\infty) \to \Omega^{N-1} \mathrm{Th}(\gamma_N^\perp)$.
  Applying the Pontryagin-Thom construction to an arbitrary
  $k$-trivialized formal fibration gives a path $\alpha: [0,\infty)
  \to \Omega^{N-1} \mathrm{Th}(\gamma_N^\perp)$ whose restriction to
  $[k,\infty)$ is $\alpha_0$.  The space of all such paths is homotopy
  equivalent to the loop space $\Omega^{N}
  \mathrm{Th}(\gamma_N^\perp)$.
\end{proof}
Increasing $k$ gives a diagram of stabilization maps
\begin{align*}
  \xymatrix{ {\coprod_g B \Diff(F_{g,1})} \ar[d]^s \ar[r] &
    {\Omega^\infty \C
      P^\infty_{-1}} \ar[d]^s\\
    {\coprod_g B \Diff(F_{g,1})} \ar[r] & {\Omega^\infty \C
      P^\infty_{-1}.}
  }
\end{align*}
On the right hand ``formal'' side, the stabilization map is up to
homotopy described as multiplication by a fixed element
(multiplication in the loop space structure.  See the proof of
Lemma~\ref{lemma:PT2}.)  In particular it is a homotopy equivalence,
so the direct limit has the same homotopy type ${\Omega^\infty \C
  P^\infty_{-1}}$.  Taking the direct limit we get the desired
map
\begin{align*}
  \Z \times B \Gamma_\infty \to {\Omega^\infty \C P^\infty_{-1}}.
\end{align*}
This is the map~\eqref{eq:1}.  The target of this map should be
thought of as the homotopy direct limit of the system
\begin{align*}
  {\Omega^\infty \C P^\infty_{-1}} \xrightarrow{s} {\Omega^\infty \C
    P^\infty_{-1}} \xrightarrow{s} \dots
\end{align*}
and as a classifying space for formal fibrations with $T_\infty$ ends.
(In some sense it is a ``coincidence'' that the classifying space for
formal fibrations and the classifying space for formal fibrations with
$T_\infty$ ends have the same homotopy type).

% The right hand side is a classifying space for formal fibrations with
% $T_\infty$ ends, i.e.\ triples $(f,\phi,j)$, where $f: M \to X$ is a
% smooth map, $j: X \times T_\infty \leadsto E$ is a germ at infinity of
% an embedding over $X$, such that the image contains the complement of
% a compact set, $\phi: TM \to f^*TX$ is a bundle epimorphism which is
% integrable on the image of some representative of $j$.

\subsection{Oriented bordism}

For a pair $(X,A)$ of spaces, oriented bordism $\Omega_n(X,A) =
\Omega^{\mathrm{SO}}_n(X,A)$ is defined as the set of bordism classes
of continuous maps of pairs
\begin{equation*}
  f: (M, \partial M) \to (X,A)
\end{equation*}
for smooth oriented compact $n$-manifolds $M$ with boundary $\partial
M$.   To be precise, a bordism between two maps
$f_\pm: (M_\pm, \partial M_\pm) \to (X,A)$ is a map $F:(W,\p'W)\to (X,A)$, where $W$ is a compact, oriented manifold   with boundary with 
corners, so that $\p W=\p_-W\cup\p'W\cup\p_+W$, where $\p_\pm W=M_\pm$ and  $\p'W$ is a cobordism between closed manifolds $\p M_-$ and $\p M_+$, and the map
$F: (W,\partial' W) \to (X,A)$ such that $F|_{\p_\pm W}=f_\pm$.   

For a single space $X$ set $\Omega_n(X) = \Omega_n(X,\varnothing)$.
Oriented bordism is a \emph{generalized homology theory}.  This means
that it satisfies the usual Eilenberg-Steenrod axioms for homology
(long exact sequence etc) except for the dimension axiom.  In
particular a map $A \to X$ induces an isomorphism $\Omega_*(A) \to
\Omega_*(X)$ if and only if the relative groups $\Omega_*(X,A)$ all
vanish.  The following result is well known.  It follows easily from
the Atiyah-Hirzebruch spectral sequence (for completeness we give a
geometric proof in Appendix B).
\begin{lemma}\label{lem:bordism-to-homology}
  Let $f: X \to Y$ be a continuous map of topological spaces.  Then
  the following statements are equivalent.
  \begin{enumerate}[(i)]
  \item $f_*: H_k(X) \to H_k(Y)$ is an isomorphism for $k < n$ and an
    epimorphism for $k = n$.
  \item $f_*: \Omega_k(X) \to \Omega_k(Y)$ is an isomorphism for $k <
    n$ and an epimorphism for $k = n$.
  \end{enumerate}
  In particular, $f$ induces an isomorphism in homology in all degrees
  if and only if it does so in oriented bordism.
\end{lemma}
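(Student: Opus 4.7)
My plan is to reduce the statement to its relative formulation and then compare the two theories via the Atiyah-Hirzebruch spectral sequence (AHSS); the ``geometric proof'' promised for Appendix~B presumably avoids the AHSS by a direct construction, which I sketch at the end.

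First I would replace $f$ by a cofibration (using the mapping cylinder) so that $X\subset Y$, and observe that both oriented bordism and singular homology satisfy long exact sequences for the pair $(Y,X)$. A standard diagram chase shows that the hypothesis ``iso for $k<n$, epi for $k=n$'' on $f_*$ is equivalent to the vanishing $\Omega_k(Y,X)=0$ for $k\le n$ (resp.\ $H_k(Y,X)=0$ for $k\le n$). So it suffices to prove the equivalence
\[
H_k(Y,X)=0 \text{ for all }k\le n \iff \Omega_k(Y,X)=0 \text{ for all }k\le n.
\]

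Now I would invoke the AHSS $E^2_{p,q}=H_p(Y,X;\Omega_q)\Rightarrow \Omega_{p+q}(Y,X)$, where $\Omega_q=\Omega_q(\mathrm{pt})$. For the implication $(i)\Rightarrow(ii)$, if $H_k(Y,X)=0$ for all $k\le n$, then by the universal coefficient theorem $H_p(Y,X;\Omega_q)=0$ for every $q$ provided $p\le n$ (this needs both $H_p$ and $H_{p-1}$ to vanish, which holds). Hence every $E^2$-term on the diagonals $p+q\le n$ is zero, forcing $\Omega_k(Y,X)=0$ for $k\le n$. For the reverse $(ii)\Rightarrow(i)$, I would induct on $n$: the base $n=0$ is immediate from $\Omega_0(\mathrm{pt})=\Z$. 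Assuming $H_k(Y,X)=0$ for $k\le n-1$, every term on the diagonal $p+q=n$ with $p<n$ vanishes. The only possible contribution in total degree $n$ is $E^2_{n,0}=H_n(Y,X)$, and I would check that all differentials touching $E^r_{n,0}$ vanish: outgoing ones land in $E^r_{n-r,r-1}$, which is zero by the inductive hypothesis and UCT, while incoming ones come from $E^r_{n+r,1-r}$, which is zero because $\Omega_{1-r}=0$ for $r\ge 2$. Therefore $E^\infty_{n,0}=H_n(Y,X)$ is a subquotient of $\Omega_n(Y,X)=0$, closing the induction.

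A fully geometric argument (of the sort I expect in Appendix~B) would proceed differently: represent an element of $\Omega_n(Y,X)$ by a smooth map $\phi:(M,\partial M)\to(Y,X)$, use a singular chain bounding $\phi_*[M]$ in the given homology to build a bordism by resolving simplices to manifolds, and smooth the result by surgery; conversely, realize a class in $H_n(Y,X)$ by a map of a pseudomanifold, and inductively resolve it to a smooth manifold using the lower-degree vanishing of $\Omega_*$, then apply $\Omega_n(Y,X)=0$ to conclude. The main obstacle here is precisely the resolution step: integral homology classes are not in general representable by smooth oriented manifolds (this is Thom's observation), so the inductive vanishing of lower bordism groups is essential to make the resolution work. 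The spectral-sequence proof sidesteps this obstacle by encoding exactly the same information via the vanishing of the coefficients $\Omega_1=\Omega_2=\Omega_3=0$ in low degrees.
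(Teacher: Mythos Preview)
Your AHSS argument is correct and is precisely the route the paper alludes to in the sentence ``It follows easily from the Atiyah--Hirzebruch spectral sequence.'' Your reduction to the relative statement and the inductive edge argument are clean and handle the full $n$-truncated version of the lemma.

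The paper's \emph{geometric} proof in Appendix~B, however, is quite different from the sketch you propose. It does not attempt to resolve simplices into manifolds or to realise homology classes by pseudomanifolds. Instead it goes through the Hurewicz theorem: first, a map $f$ is a homology isomorphism if and only if the unreduced suspension $\Sigma f$ is a weak equivalence (classical Hurewicz applied to the simply connected space $\Sigma X$). The core of the appendix is then a \emph{bordism Hurewicz theorem}: for an $(n-1)$-connected pointed space $(X,x_0)$, the Hurewicz map $\pi_n(X,x_0)\to\Omega_n(X,x_0)$ is an isomorphism. The inverse is built by representing a bordism class by $(M^n,\partial M)\to(X,x_0)$, choosing a CW structure on $M$ with a single top cell, and using connectivity of $X$ to factor the map through $M/M^{[n-1]}\simeq S^n$; the subtle point is that the attaching map of the top cell of an oriented manifold is null in homology (by Poincar\'e duality), hence nullhomotopic since $n\ge2$. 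From this one deduces that a map of simply connected spaces is a weak equivalence iff it is a bordism isomorphism, and combining with the suspension step gives the equivalence of homology and bordism isomorphisms.

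Two remarks. First, the paper's Appendix~B, as written, really only establishes the ``in particular'' clause (isomorphism in all degrees); the $n$-truncated version is not spelled out there, so your AHSS argument actually fills that in. Second, your proposed geometric route via resolving simplices would run into exactly the representability obstruction you mention, and making it precise would be considerably more work than either the AHSS or the paper's Hurewicz approach.
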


We apply this to the pair $(\Omega^\infty \C P_{-1}^\infty, \Z \times
B\Gamma_\infty)$.  Interpreting $\Z \times B\Gamma_\infty$ and
$\Omega^\infty \C P_{-1}^\infty$ as classifying spaces for fibrations,
resp.\ formal fibrations, with $T_\infty$ ends, we get the following
interpretaion.
\begin{lemma}\label{lemma:interpret-relative-bord}
  There is a natural bijection between the relative oriented bordism
  groups $\Omega_*(\Omega^\infty \C P_{-1}^\infty, \Z \times
  B\Gamma_\infty)$ and cobordism classes of formal fibrations $f: M
  \to X$ with $T_\infty$ ends.  The formal fibration is required to be
  integrable over a neighborhood of $\partial X$, and cobordisms $F: W
  \to Y$ are required to be integrable over a neighborhood of
  $\partial' Y$.
\end{lemma}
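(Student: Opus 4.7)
The plan is to apply a relative version of the Pontryagin--Thom construction behind Lemmas~\ref{lemma:PT1} and~\ref{lemma:PT2}. A class in $\Omega_n(\Omega^\infty \C P_{-1}^\infty,\ \Z\times B\Gamma_\infty)$ is represented by a continuous map of pairs
\[
g:(X,\partial X)\to(\Omega^\infty \C P_{-1}^\infty,\ \Z\times B\Gamma_\infty),
\]
with $X$ a compact oriented $n$-manifold. By the interpretation of $\Z\times B\Gamma_\infty$ as a classifying space for surface bundles with $T_\infty$ ends, the restriction $g|_{\partial X}$ corresponds to a genuine fiber bundle $E_0\to\partial X$ with $T_\infty$ ends; Lemma~\ref{lemma:PT2} turns the full map $g$ into a formal fibration with $T_\infty$ ends over $X$. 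The goal is to arrange these two outputs so that the formal fibration over $X$ realizes $E_0$ integrably near $\partial X$, and to invert this construction.

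First I would choose a collar $\partial X\times[0,1)\hookrightarrow X$ and, using the factorization of $g|_{\partial X}$ through $\Z\times B\Gamma_\infty$, homotope $g$ rel $\partial X$ so that on the collar it equals the composition of the projection $\partial X\times[0,1)\to\partial X$ with $g|_{\partial X}$. The Pontryagin--Thom construction of Lemma~\ref{lemma:PT2} can then be performed in two stages: first on the collar, where the hypothesis of Lemma~\ref{lemma:PT1} applies and delivers a genuine fibration (the pullback of $E_0$) together with the tautological integrable $\phi=df$; then on the complement by choosing a transverse extension rel the collar. The outcome is a formal fibration over $X$ with $T_\infty$ ends that is integrable on an open neighborhood of $\partial X$ and whose underlying bundle on that neighborhood is pulled back from $E_0$.

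Conversely, given a formal fibration $f:M\to X$ with $T_\infty$ ends that is integrable on a neighborhood $U$ of $\partial X$, Lemma~\ref{lemma:PT2} applied to $f|_U$ produces, via the integrable PT construction of Lemma~\ref{lemma:PT1}, a classifying map $U\to\Z\times B\Gamma_\infty$ for the genuine fibration $f|_U$; together with the classifying map $X\to\Omega^\infty \C P_{-1}^\infty$ for all of $f$, this assembles (after restricting from $U$ to $\partial X$) to a map of pairs and hence to a relative bordism class. The same recipe applied to a cobordism $(W,Y,F,\Phi)$ with the prescribed integrability over a neighborhood of $\partial'Y$ produces a relative bordism between the classifying maps of its two ends, so the correspondence descends to bordism and cobordism classes in both directions.

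The main technical obstacle is the passage between the homotopical datum ``the classifying map is given on $\partial X$ with values in $\Z\times B\Gamma_\infty$'' and the geometric datum ``the formal fibration is integrable on an \emph{open} neighborhood of $\partial X$''. This is handled by a standard collar/homotopy-extension argument combined with the homotopy invariance of Pontryagin--Thom: the restriction of the formal fibration to a collar is, up to concordance, the pullback from $\partial X$, and when the boundary value is already integrable the concordance can be chosen through integrable formal fibrations, because the forgetful natural transformation from fibrations to formal fibrations is realized by the universal map $\coprod_g B\Diff(F_{g,1})\to\Omega^\infty \C P_{-1}^\infty$. Once this collar adjustment is in place for both objects and cobordisms, bijectivity of the correspondence reduces to the concordance-classification statements of Lemmas~\ref{lemma:PT1} and~\ref{lemma:PT2} by the usual argument that concordance equals bordism for objects represented by classifying spaces.
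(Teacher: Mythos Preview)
The paper does not give an explicit proof of this lemma; it is stated as an immediate consequence of the classifying-space interpretations established in Lemmas~\ref{lemma:PT1} and~\ref{lemma:PT2}, together with the definition of relative bordism. Your proposal correctly supplies the details the paper leaves implicit, and the approach---relative Pontryagin--Thom plus a collar argument to trade ``classifying map lands in $\Z\times B\Gamma_\infty$ on $\partial X$'' for ``integrable on a neighborhood of $\partial X$''---is exactly what is intended.
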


That~\eqref{eq:1} induces an isomorphism in integral homology
(Madsen-Weiss' theorem) is now, by
Lemma~\ref{lem:bordism-to-homology}, equivalent to the statement that
the relative groups
\begin{align*}
  \Omega_*(\Omega^\infty \C P_{-1}^\infty, \Z \times B\Gamma_\infty)
\end{align*}
all vanish.  By Lemma~\ref{lemma:interpret-relative-bord}, this is
equivalent to
\begin{theorem}
  \label{thm:main1}
  Any formal fibration $f: M \to X$ with $T_\infty$ ends is cobordant
  to an integrable one.  If $(f,\phi)$ is already integrable over
  $\partial X$, then the cobordism can be assumed integrable over
  $\partial'$.
\end{theorem}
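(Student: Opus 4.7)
The plan is to prove Theorem~\ref{thm:main1} in three stages: first, reduce an arbitrary formal fibration to one whose underlying map has only the mildest possible singularities (folds); second, use a geometric form of Harer stability to trade folds against stabilizations coming from the $T_\infty$ end; and third, take care of the connected-fiber and boundary-relative conditions.

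First I would apply an h-principle of Eliashberg--Mishachev type to the formal fibration $(f,\phi)$. The goal is to show that, up to cobordism of formal fibrations rel $\partial X$, one can assume $f:M\to X$ is a \emph{folded map}: the singular set $\Sigma\subset M$ where $df$ fails to be surjective is a codimension-one submanifold along which $f$ has fold singularities in the usual normal form, and $f$ is a submersion on $M\setminus\Sigma$. The stable epimorphism $\phi$ then agrees with $df$ up to a homotopy supported in a neighborhood of $\Sigma$, and $\phi$ itself provides the formal solution needed to run the h-principle. This step is ``soft'': the obstructions are homotopy-theoretic and are killed by the existence of $\phi$, and since $(f,\phi)$ is already integrable near $\partial X$ the construction can be made relative to the boundary.

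Second, and this is the heart of the proof, I would invoke the geometric Harer stability statement advertised in the introduction. Near each component of $\Sigma$, the folded map $f$ looks like a family of handle attachments on the fibers, and the failure to be a genuine fibration is measured by the fold locus. The fiberwise embedding $j:X\times D^2\hookrightarrow M$ together with the ability to stabilize by fiberwise connected sum with $X\times T$ provides, in each fiber, a distinguished torus summand with a marked disk. Harer stability, formulated geometrically, says that after enough such stabilizations the local cobordism model that eliminates a fold can be realized globally in the total space: one constructs a cobordism $(W,X\times[0,1],F,\Phi)$ on which $F$ is integrable, whose top face is the folded $f$ fiber-summed with tori, and whose bottom face is a genuine fibration. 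Iterating over the (finitely many) components of $\Sigma$ and keeping all surgeries disjoint from a collar of $\partial X$ produces an integrable fibration cobordant to the original $(f,\phi)$. The main obstacle is precisely this step: one must produce explicit, fiberwise-controlled fold-elimination cobordisms, and show that the topological price one pays (extra genus) is supplied by the $T_\infty$ ends -- this is exactly the geometric reformulation of Harer stability that the paper is built around.

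Third, I would perform a clean-up to ensure connected fibers and the relative boundary condition. Any disconnected fibers can be tube-summed along arcs in $X$ at the cost of more stabilizations, which the $T_\infty$ ends can absorb. The relative statement is obtained by performing every construction of Steps~1 and~2 in a collar-sensitive way, i.e.\ parametrizing the homotopies and cobordisms so that they are stationary over a smaller collar of $\partial X$; since $(f,\phi)$ is already integrable with connected fibers over that collar by hypothesis, no modification is needed there. This yields the desired integrable fibration cobordant to $(f,\phi)$ rel $\partial'Y$, completing the proof.
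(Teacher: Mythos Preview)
Your three-stage outline matches the paper's architecture, but two ingredients are genuinely missing.

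First, reducing to a folded map is not enough: the paper needs \emph{enriched} folded maps. A bare folded map $f$ carries no canonical suspension back to a stable epimorphism $TM\oplus\epsilon^1\to f^*TX\oplus\epsilon^1$, so you have no way to certify that the folded map you produce still represents the original formal fibration $(f,\phi)$; your sentence ``$\phi$ then agrees with $df$ up to a homotopy supported near $\Sigma$'' is exactly the unsupported claim. The paper's enrichment $\mathfrak{e}$---framed membranes $V_i\subset M$ with $\partial V_i=\Sigma(f)$---supplies both a canonical suspension $\mathcal{L}(f,\mathfrak{e})$ and the data (where and how) for the fold-eliminating surgeries in your Step~2. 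Closely tied to this is a destabilization problem you skip: the wrinkling $h$-principle takes as input an \emph{unstable} epimorphism $TM\to TX$, whereas a formal fibration only gives a stable one, and there is an Euler-class obstruction to destabilizing. The paper spends a whole section arranging $\Phi$ to already be of enriched-folded form over a submanifold $M_0\subset M$ with the correct Euler characteristic, so that the obstruction vanishes on the complement; ``the obstructions are homotopy-theoretic and killed by $\phi$'' does not cover this.

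Second, your ordering is off in a way that matters. Connectedness of fibers is not a clean-up step: the Harer-stability argument for eliminating a hyperbolic fold requires that fibers be connected even after removing the singular points, so this must be arranged \emph{before} your Step~2, not after. The paper also separates elliptic folds (index $0$ and $3$) from hyperbolic ones (index $1$ and $2$): elliptic folds are traded for hyperbolic ones by an elementary bordism trick that does not use Harer at all, and only afterward is the generalized Harer stability theorem invoked to construct bases for the surgeries that remove the hyperbolic folds and their membranes.
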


Theorem~\ref{thm:main1} is our main result.  It is a geometric
version of Madsen-Weiss' theorem.  It is obviously equivalent to
Theorem~\ref{thm:main0} above (with its relative form).

\subsection{Harer stability}\label{sec:Harer-intro}

J.\ Harer proved a homological stability theorem in \cite{Ha85} which
implies precise bounds on the number of stabilizations needed in
Theorem~\ref{thm:main0}.  At the same time, it will be an important
part of the proof of the same theorem (as it does in \cite{MW07}).

Roughly it says that the homology of the mapping class group of a
surface $F$ is independent of the topological type $F$, as long as the
genus is high enough.  The result was later improved by Ivanov
(\cite{Iv89,Iv93}) and then by Boldsen \cite{Bo09}. We state the precise result.

Consider an inclusion $F \to F'$ of compact, connected,
oriented surfaces.  Let $S = \partial F'$, and let $\Sigma\subset
F'$ denote the complement of $F$.  Thus $F' = F \cup_{\partial F}
\Sigma$.  There is an induced map of classifying spaces
\begin{align}\label{eq:3}
  B\Diff(F) \to B\Diff(F').
\end{align}
A map $f:X \to B\Diff(F')$ classifies a fibration $E \to X$ with
fiber $F'$ and boundary $\partial E = X \times S$, where $S = \partial
F'$.  Lifting it to a map into $B\Diff(F)$ amounts to extending the
embedding $X \times S \to E$ to an embedding
\begin{align*}
  X \times \Sigma \to E
\end{align*}
over $X$.

The most general form of Harer stability states that the
map~(\ref{eq:3}) induces an isomorphism in $H_k(-;\Z)$ for $k <
2(g-1)/3$, where $g$ is the genus of $F$.  Consequently, by
Lemma~\ref{lem:bordism-to-homology}, it induces an isomorphism in
oriented bordism $\Omega_n(-)$ for $n < 2(g-1)/3$ or, equivalently, the
relative bordism group
\begin{align*}
  \Omega_n(B\Diff(F'), B\Diff(F))
\end{align*}
vanishes for $n < 2(g-1)/3$.  Thus, Harer stability has the following
very geometric interpretation: For any fibration $f:E \to X$ with
fiber $F'$ and boundary $\partial E = X \times S$, $f$ is cobordant to
a fibration $f': E' \to X'$ via a cobordism $F: W \to M$ (which is a
fibration with trivialized boundary $M \times S$, which restricts to
$f \amalg f'$) where the embedding $X' \times S \to E'$ extends to an
embedding
\begin{align*}
  X' \times \Sigma \to E'
\end{align*}
Moreover, this can be assumed compatible with any given extension
$(\partial X) \times \Sigma \to E$ over the boundary of $X$.  Here we
assume $F' = F\cup_{\partial F} \Sigma$ as above, that $F$ and $F'$
are connected, and that $F$ has large genus.  If the fibration has
$T_\infty$ ends, the genus assumption is automatically satisfied, and
we get the following corollary.
 %%%%%%%%%%%%%%
%\marginpar{Picture N1: A surface of higher genus, with $\Sigma_2$ consisting of two annuli, $A$ and $B$ and and a surface $\Sigma_2\subset \Sigma_1$ which consists of the annulus $A$ and a disc $D\subset\Int B$.}
%%%%%%%%%%%%%%%%%%%%

\begin{theorem}[Geometric form of Harer stability]
  \label{thm:Harer-geometric}
  Let $\Sigma_1 \subset \Sigma_2$ be compact surfaces with boundary
  (not necessarily connected).  Let $f: M \to X$ be a fibration with
  $T_\infty$ ends, and let
  \begin{align*}
    j: (\partial X \times \Sigma_2) \cup (X \times \Sigma_1) \to M
  \end{align*}
  be a fiberwise embedding over $X$, such that in each fiber the
  complement of its image is connected.
 \begin{figure}[hi]
  \centerline{\includegraphics[height=60mm]{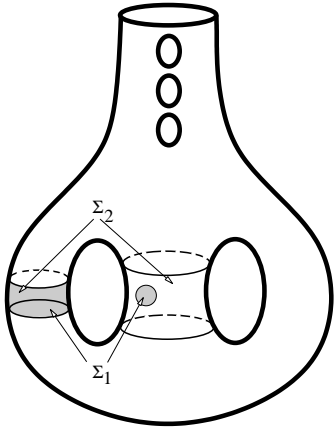}}
  \caption{ $\Sigma_1\subset \Sigma_2\subset F$}
  \label{mw15}
\end{figure}
  Then, after possibly changing $f: M \to X$ by a bordism which is the
  trivial bordism over $\partial X$, the embedding $j$ extends to an
  embedding of $X \times \Sigma_2$.
\end{theorem}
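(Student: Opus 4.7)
The plan is to deduce the theorem directly from the geometric interpretation of Harer stability given in the paragraph immediately preceding the statement, combined with Lemma~\ref{lem:bordism-to-homology}.

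First I would use the $T_\infty$-ends to reduce to a compact-fibered situation. Since $f: M \to X$ has $T_\infty$-ends, I can cut off a trivialized collar of the ends, deep enough that the cutoff locus lies away from $\Sigma_1$ and contains enough room for a tentative copy of $\Sigma_2$; this yields a fibration $f_0: M_0 \to X$ whose fiber is a compact connected surface $F$ of arbitrarily large genus $g$, with $\partial F$ trivialized over $X$. Writing $F_1 = \overline{F \setminus \Sigma_1}$ and $F_2 = \overline{F \setminus \Sigma_2}$, both connected by hypothesis, the data $(f, j|_{X \times \Sigma_1})$ is classified by a map $X \to B\Diff(F_1)$, and extending $j$ to $X \times \Sigma_2$ is precisely the problem of lifting this classifying map along the Harer stabilization map $B\Diff(F_2) \to B\Diff(F_1)$.

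Next, by Harer stability in the Ivanov--Boldsen form cited above, this map induces an isomorphism on $H_k(-;\Z)$ for $k < 2(g-1)/3$, and hence by Lemma~\ref{lem:bordism-to-homology} the same holds for oriented bordism $\Omega_k(-)$. Consequently the relative bordism group
\[
\Omega_n\bigl(B\Diff(F_1),\, B\Diff(F_2)\bigr)
\]
vanishes in this range, and by choosing the $T_\infty$-cutoff deep enough (so that $g$ is large enough) we may arrange $n = \dim X$ to lie in the stable range. The vanishing of this relative bordism group is exactly the statement that the class of $(f_0, j|_{X \times \Sigma_1})$ admits a bordism to a fibration which carries an embedding of $X \times \Sigma_2$. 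Re-gluing the trivialized $T_\infty$-ends then produces the desired bordism of the original $f$.

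The boundary condition (trivial bordism over $\partial X$) is handled by working with the relative analogue of the preceding step: the hypothesized extension of $j$ over $\partial X$ to $\partial X \times \Sigma_2$ means that the classifying map already factors through $B\Diff(F_2)$ on $\partial X$, so the obstruction lives in a relative bordism group with prescribed behavior on $\partial X$, which again vanishes by Harer stability in the same range. The main obstacle, as I see it, is not a deep one but a bookkeeping matter: arranging the $T_\infty$-cutoff to be compatible both fiberwise over $X$ and with the $\Sigma_2$-extension already given over $\partial X$, and upgrading the cobordism produced from the vanishing of the relative bordism group to one that is a \emph{product} over $\partial X$, rather than merely concordant to a product there.
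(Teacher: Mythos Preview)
Your proposal is correct and follows essentially the same route as the paper. The paper does not give a separate formal proof of this theorem: it is stated as a direct corollary of the homological Harer stability result together with Lemma~\ref{lem:bordism-to-homology}, via exactly the classifying-space and relative-bordism interpretation you describe, with the $T_\infty$-ends used (as you do) to guarantee that the genus is large enough to land in the stable range.
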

Explicitly, the bordism in the theorem is a fibration $F: W \to Y$
with $T_\infty$ ends, where the boundary $\partial Y$ is partitioned
as $\partial Y = X \cup X'$ with $\partial X = \partial X' = X \cap X'$
and $F_{|X} = f$.  The extension of $j$ is a fiberwise embedding
\begin{align*}
  J: (Y\times \Sigma_1) \cup (X' \times \Sigma_2) \to W
\end{align*}
over $Y$.

\subsection{Outline of proof}

\subsubsection{From formal fibrations to folded maps}\label{sec:formal-folded}
Given a formal fibration $(f:M\to X,\phi)$ with $\Tinf$ ends, the
overall aim is to get rid of all singularities of $f$ after changing
it via \emph{bordisms}.  Our first task will be to simplify the
singularities of $f$ as much as possible using only \emph{homotopies}.
The simplest generic singularities of a map $f:M\to X$ are {\it
  folds}. The fold locus $\Sigma(f)$ consists of points where the rank
of $f$ is equal to $\dim X-1$, while the restriction
$f|_{\Sigma(f)}:\Sigma(f)\to M$ is an immersion.  In the case when
$\dim M=\dim X+2=n+2$, which is the case we consider in this paper, we
have $\dim\Sigma(f)=n-1$.  A certain additional structure on folded
maps, called an {\it enrichment}, allows one to define a homotopically
canonical \emph{suspension}, i.e.\ a bundle epimorphism $\phi=\phi_f:
TM \oplus \epsilon^1 \to TX \oplus \epsilon^1$, such that $(f,\phi)$
is a formal fibration.  The enrichment of a folded map $f$ consists of
\begin{itemize}
\item an $n$-dimensional submanifold $V\subset M$ such that $\p
  V=\Sigma(f)$, and the restriction of $f$ to each connected component
  of $V_i\subset V$ is an embedding $\Int V_i\to X$;
\item a trivialization of the bundle $\Ker df|_{\Int V}$ with a
  certain additional condition on the behavior of this trivialization
  on $\p V=\Sigma(f)$.
\end{itemize}
 
Of course, existence of an enrichment is a strong additional condition
on the fold map.  In Section~\ref{sec:enriched}, we explain how to
associate to an enriched folded map $(f,\e)$ a formal fibration
$(f,\mathcal{L}(f,\e))$, where $\mathcal{L}(f,\e): TM \oplus
\epsilon^1 \to TX\oplus \epsilon^1$ is a bundle epimorphism associated
to the enrichment $\epsilon$.  The main result of
Section~\ref{sec:prelim} is Theorem~\ref{thm:epi-folds}, which proves
that any formal fibration can be represented in this way (plus a
corresponding relative statement).  This is proved using the
$h$-principle type result proven in \cite{EM97}.  Note that Theorem~\ref{thm:epi-folds} is a variation of the main result from \cite{El72} and can also be proven by the methods of that paper.
 Also in Section
%  allows
% one to deform a formal fibration $(f,\phi)$ to a formal fibration of
% the form $(g,\phi_g)$ where $g$ is an enriched folded map. In
% addition, the constructed folded maps have the property that images of
% all connected components of $\Sigma(f)$ are embedded and are
% homological to $0$ in $X$.  
%This program is realized in Section
\ref{sec:prelim} we recall some basic facts about folds and other
simple singularities of smooth maps, and discuss certain surgery
constructions needed for the rest of the proof of Theorem
\ref{thm:main1}.  This part works independently of the codimension
$d=\dim N-\dim M$, and hence the exposition in this section is done
for arbitrary $d>0$.

\subsubsection{Getting rid of elliptic folds}\label{sec:elliptic}
Each fold component has an index which is well defined provided that
the projection of the fold is co-oriented.  Assuming this is done,
folds in the case $d=2$ can be of index $0,1,2$ and $3$. We call folds
of index $1,2$ {\it hyperbolic} and folds of index $0,3$ {\it
  elliptic}. It is generally impossible to get rid of elliptic folds
by a homotopy of the map $f$. However, it is easy to do so  if one allows to change $f$
to a {\it bordant} map $\wt f:\wt M\to X$.  This bordism trades each
elliptic fold component by a parallel copy of a hyperbolic fold, see
Figure \ref{mw11} and Section \ref{sec:to-hyperbolic} below. A similar
argument allows one to make all fibers $\wt f^{-1}(x),x\in X$,
connected (comp. 
  \cite{MW07}).

\subsubsection{Generalized Harer stability theorem}\label{sec:gen-Harer}
%An important role in our proof plays a 

A generalization of Harer's stability theorem to enriched folded maps,
see Theorem \ref{thm:Harer2} below, plays an important role in our
proof.  In Section~\ref{sec:proof-Harer} below we deduce Theorem
\ref{thm:Harer2} from Harer's Theorem \ref{thm:Harer-geometric} by
induction over strata in the stratification of the image
$f(\Sigma)\subset X$ of the fold, according to multiplicity of
self-intersections. 

\subsubsection{Getting rid of hyperbolic folds}\label{sec:final-step}
Let $f$ be an enriched folded map with hyperbolic folds and with
connected fibers.
% Our construction guarantees that images of all fold
% components are embedded and homologically trivial.  
Let $C$ be one of the fold components and $\oC=f(C)\subset X$ its
image. For the purpose of this introduction we will consider only the following special case.
First, we will assume that $C$ is homologically trivial. As we will see,   when $\dim X>1$ this will always be possible to arrange. In particular, $\oC$ bounds a domain $U_C\subset X$.
Next, we will assume that  the fold $C$ has index 1 with respect to the outward coorientation of the boundary of the domain $U_C$.   In other words, when the point $x\in
X$ travels across $\oC$ inside $U_C$ then
one of the circle in the fiber $f^{-1}(x)$ collapses to a point, so
locally the fiber gets disconnected to two discs, see Figure
\ref{mw13}. The inverse index 1 surgery makes a connected sum of two
discs at their centers.
 \begin{figure}[hi]
  % \centerline{\psfig{figure=,height=30mm}}
  \centerline{\includegraphics[height=50mm]{}}
  \caption{Fibers of an index 2 fold}
  \label{mw13}
\end{figure}
  Note that on an open collar $\Omega=\p
U_C\times(0,1)\subset \Int U_C$ along $C$ in $U_C$ there exists two sections
$S_\pm:\Omega\to M$ such that the $0$-sphere $\{S_-(x), S_+(x)\}$ is
the ``vanishing cycle'', for the index $1$ surgery when $x$ travels
across $\oC$. Moreover, the enrichment structure ensures that the
vertical bundle along these sections is trivial. If one could extend
the sections $S_\pm$ to all of $U_C$ preserving all these properties,
then the fiberwise index $1$ surgery, attaching $1$-handle along small
discs surrounding $S_\pm(x)$ and $S_\pm(x)$, $x\in U_C$, would eliminate
the fold $\oC$. This is one of the fold surgeries described in detail
in Section \ref{sec:surgery-general}.

Though such extensions $S_\pm(x):\Int U_C\to M$ need not exist for our
original folded map $f$, Harer stability theorem in the form
\ref{sec:surgery-general} states that there is an enriched folded map
$\wt f:\wt M\to\wt X$, bordant to $f$, for which such sections do
exist, and hence the fold $\oC$ could be eliminated.  
   
\subsubsection{Organization of the paper}
As already mentioned, Section \ref{sec:prelim} recalls basic
definition and necessary results and constructions involving fold
singularities. In Section \ref{sec:folds-cusps} we define folded maps.
The goal of Section \ref{sec:double-folds} is Theorem
\ref{thm:special-folded} which is an $h$-principle for constructing
so-called {\it special folded} maps, whose folds are organized in pairs
of spheres.  This theorem is a reformulation of the Wrinkling Theorem
from \cite{EM97}.  We deduce Theorem~\ref{thm:special-folded} from the
Wrinkling Theorem in Appendix A.  In Section \ref{sec:enriched} we
define the notion of enrichment for folded maps and prove that an
enriched folded map admits a homotopically canonical suspension and
hence gives rise to a formal fibration.  The rest of
Section~\ref{sec:prelim} will prove that any formal fibration is
cobordant to one induced by an enriched folded map.  Section
\ref{sec:surgery-general} is devoted to fold surgery constructions
which we use later in the proof of the main theorem. These are just
fiberwise Morse surgeries, in the spirit of surgery of  singularities
techniques developed in \cite{El72}.  For further applications we need a  version of Theorem \ref{thm:special-folded}
 applicable to a slightly stronger version of formal fibrations
$(f,\phi)$ when $\phi:TM\to TX$ is a surjective map between the {\it
  non-stabilized} tangent bundle.  In Section \ref{sec:destab} we
explain the modifications which are necessary in the stable case, and
in Section \ref{sec:epi-folds} we formulate and prove
Theorem \ref{thm:epi-folds}, reducing formal fibrations to enriched folded
maps.
   %%%%%%%
   
In Section \ref{sec:cob} we introduce several special bordism
categories and formulate the two remaining steps of the
proof: Proposition \ref{prop:enriched-hyperbolic} which allows us to
get rid of elliptic folds, and Proposition
\ref{prop:hyperbolic-fibrations} which eliminates the remaining
hyperbolic folds. 
 
Section \ref{sec:Harer-generalized} is devoted to the proof of the Harer
stability theorem for folded maps (Theorem \ref{thm:Harer2}).  We
conclude the proof of  Theorem \ref{thm:main1} in Section
\ref{sec:main-proof} by proving Proposition
\ref{prop:enriched-hyperbolic} in \ref{sec:to-hyperbolic} and
Proposition \ref{prop:hyperbolic-fibrations} in
\ref{sec:to-fibrations}.  In Section \ref{sec:misc} we collect two
Appendices. In Appendix A we deduce Theorem \ref{thm:special-folded}
from the Wrinkling Theorem from \cite{EM97}. Appendix B is devoted to
a geometric proof of Lemma~\ref{lem:bordism-to-homology}.

\section{Folded maps}\label{sec:prelim}

\subsection{Folds}\label{sec:folds-cusps}
%%%%
%%%%

Let $M$ and $X$ be smooth manifolds of dimension $m=n+d$ and $n$,
respectively.\footnote{For applications in this paper we will need
  only the case $d=2$.}  For a smooth map $f:M \rightarrow X$ we will
denote by $\Sigma(f)$ the set of its singular points, i.e.
$$
\Sigma(f) = \left\{ p \in M, \; \hbox { rank } d_pf < n \right\}\;.
$$
A point $p \in \Sigma(f)$ is called a {\it fold} type singularity or a
{\it fold} of index $k$ if near the point $p$ the map $f$ is
equivalent to the map
$$
\bbR ^{n-1} \times \bbR ^{d+1} \rightarrow \bbR ^{n-1} \times \bbR
^{1}
$$
given by the formula
\begin{equation}\label{eq:fold}
(y,x) \mapsto \left( y,\,\, Q(x)= -\sum^k_1 x^2_i + \sum_{k+1}^{d+1} x^2_j
\right)
\end{equation}
where $x=(x_1, \dots, x_{d+1}) \in \bbR ^{d+1}$ and
$y=(y_1,...,y_{n-1}) \in \bbR ^{n-1}$. We will also denote
$x_-=(x_1,\dots, x_k),\; x_+=(k_{k+1},\dots, x_{d+1})$ and write
$Q(x)=-|x_-|^2+|x_+|^2$.  For $M=\bbR ^1$ this is just a
non-degenerate index $k$ critical point of the function $f:V
\rightarrow \bbR ^1$. By a {\it folded map} we will mean a map with
only fold type singularities.
Given $y\in\bbR^{n-1}$ and  an  $\epsilon>0$ we will call a $(k-1)$-dimensional sphere $y\times\{Q(x)=-\epsilon, x_+=0\}\subset\bbR^{n-1}\times\bbR^{d+1}$ the {\it vanishing cycle} of the fold over the point
$(y,-\epsilon)$.

\mn The normal bundle of the image $\oC = f(C)$ of the fold is a real
line bundle over $C$.  A \emph{coorientation} of $\oC$ is a
trivialization of this line bundle.  A choice of coorientation allows one to provide each fold component $C$ with
 a well-defined index $s$, which changes from $s$ to $d+1-s$
with a switch of the coorientation.  The normal bundle of $C$ is $\Ker\,
df$, and the second derivatives of $f$ gives an invariantly defined non-degenerate
quadratic form $d^2 f:\Ker\, df|_C\to\Coker\,\,df$.    Denote $\Cone_\pm(C):=\{z\in\Ker\,df; \pm d^2f(z)>0\}$.    
There is a splitting  $$\Ker\, df|_{C}= \Ker_-(C)\oplus\Ker_+(C), $$ which is defined uniquely up to homotopy by the condition
$\Ker_\pm(C)\setminus 0\subset\Cone_\pm(C)$.
 
%%%%%%%%%%%%%%%%%%%%%%%%%%%%

%%%% Removed here (until "End removal").  We'll explain this local
%%%% form where we need it. -Soren.

% If the bundles $\Ker_\pm(C) $ are trivial and trivialized then a
% parametric version of Morse's lemma allows us to extend the normal
% form \eqref{eq:fold} globally near $C$.  Let us write
% $$H=\{x \in \R^{d+1} \mid |Q(x)|\leq 1, |x_+|\leq 2\}.$$
% \marginpar{Should we prove or give a reference for this? Will check
%   the litt. -Soren}
% \begin{lemma}%\label{lm:normal-near-fold}
%   Consider a folded map $f:M\to X$ with co-oriented folds. Let $C
%   \subset \Sigma(f)$ be a connected component of index $k \in
%   \{0,\dots, d+1\}$. Suppose that the two subbundles $\Ker_\pm(C)
%   \subset \Ker(C)$ are trivial.  Then there exist splittings
%   $$U_{C}=C\times H,\;\;   U'=C\times[-1,1],$$
%   of tubular neighborhoods $U_C\supset C$ in $M$ and of $U\supset \oC$
%   in $X$ such that $f(y,x)=(y,Q(x))$ for $(y,x)\in C\times H$.
% \end{lemma}

% Here the frames
% $$\left(\ppxx,\dots, \ppxk\right)\;\;\hbox{and}\;\;\left(\frac{\p}{\p
%     x_{k+1}},\dots, \frac{\p}{\p x_{d+1}}\right)
% $$
% along $C$ correspond to any given trivializations of the bundles
% $\Ker_-(C)$ and $\Ker_+(C)$.  \medskip
  
% Consider a subset $E_C=U_C\cap \{Q(x)<0; x_+=0\}$. Then the
% restriction $f|_{E_C}: E_C\to U'_-=C\times[-1,0)\subset U'_C$ is an
% $S^{k-1}$-fibration. The fibers of this fibration will be call {\it
%   vanishing cycles}. Each vanishing cycle bound a $k$-disc in $M$
% which transversely intersects $C$ in $1$ point.

%%%  End removal -Soren

\subsection{Double folds and special folded
  mappings}\label{sec:double-folds}

Given an orientable $(n-1)$-dimensional manifold $C$, let us consider
the map
\begin{align}
  \label{eq:double-fold}
  w_C(n+d,n,k):C\times \bbR^1 \times \bbR^{d}
  \rightarrow C \times \bbR^1
\end{align}
given by the formula
\begin{align}\label{eq:double-fold2}
  (y,z,x) \mapsto
  \left( y, z^3 - 3z - \sum^k_1 x^2_i + \sum_{k+1}^{d} x_j^2 \right)\,,
\end{align}
where $y\in C,\,\, z \in \bbR^1$ and   $x\in \bbR ^{d}$.

\mn The singularity $\Sigma (w_C(n+d,n,k))$ consists of two copies of
the manifold $C$:
$$
C\times S^0\times 0\subset C\times \bbR^1\times\bbR ^{d}.
$$
The manifold $C\times 1$ is a fold of index $k$, while $C\times \{-1
\}$ is a fold of index $k+1$, with respect to the coorientation of
the folds in the image given by the vector field $\frac\p{\p u}$ where
$u$ is the coordinate $C\times\R\to\R$. It is important to notice that
the restriction of the map $w_C(n+d,n,k)$ to the annulus
$$A=C\times \Int D^1=C\times \Int D^1\times 0
\subset C\times \bbR^1\times\bbR ^{d}$$
is an {\it embedding}.

\begin{figure}[hi]
  % \centerline{\psfig{figure=mw2.eps,height=30mm}}
  \centerline{\includegraphics[height=40mm]{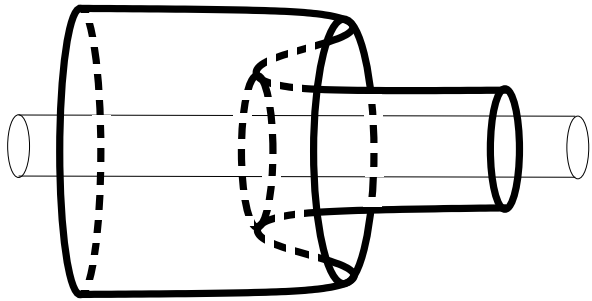}}
  \caption{The radial projection to the cylinder has a double fold along $C=S^1$}
  \label{mw6}
\end{figure}
%  \marginpar{N2: Put a concentric cylinder inside.}
\mn
Although the differential
$$dw_C(n+d,n,k):T(C\times\bbR^1\times\bbR^{d})
\rightarrow T(C\times\bbR^1)$$ degenerates at points of singularity
$\Sigma(w_C)$, it can be canonically {\it regularized} over $\Op
(C\times D^1)$, an open neighborhood of the annulus $C\times D^1$.
Namely, we can change the element $3(z^2-1)$ in the Jacobi matrix
of $w_C(n+d,n,k)$ to a positive function $\gamma$, which coincides
with $3(z^2-1)$ on $\bbR^1\setminus [-1-\delta,1+\delta]$ for
sufficiently small $\delta$.  The new bundle map
$${\cal R}(dw_C):T(C\times \bbR^1\times \bbR ^{d})
\rightarrow T(C\times \bbR^1)$$ provides a homotopically canonical
extension of the map
$$dw_C:T(C\times \bbR^1\times \bbR^{d}\setminus\Op (C\times D^1))
\rightarrow T(C\times \bbR ^1)$$ to an epimorphism (fiberwise
surjective bundle map)
$$T(C\times \bbR^1\times \bbR ^{d})
\rightarrow T(C\times \bbR^1)$$ We call ${\cal R}(dw_C)$ the {\it
  regularized differential} of the map $w_C(n+d,n,k)$.

\mn A map $f:U\rightarrow X$ defined on an open %Yasha: "ball"?
$U \subset M$ is called a {\it double $C$-fold} of index $k +\frac {1
}{ 2}$\,\, if it is equivalent to the restriction of the map
$w_C(n+d,n,k)$ to $\Op(C\times D^1)$. For instance, when $X=\R$ and
$C$ is a point, a double $C$-fold is a Morse function given in a
neighborhood of a gradient trajectory connecting two critical points
of neighboring indices.  In the case of general $n$, a double $C$-fold is called a
{\it spherical double fold} if $C=S^{n-1}$.

It is always easy to create a double $C$-fold as the following lemma
shows. This lemma is a parametric version of the creation of two new
critical points of a Morse function.

\begin{lemma}\label{lm:2fold-creation}
  Given a submersion $f:U\to X$ of a manifold $U$, a closed
  submanifold $C\subset U$ of dimension $n-1$ such that $f|_C:C\to X$
  is an embedding with trivialized normal bundle, and a splitting
  $K_-\oplus K_+$ of the vertical bundle $\Ver= \Ker\,df$
  over $\Op C$, one can
  construct a map $\wt f:U\to X$ such that
  \begin{itemize}
  \item $\wt f$ coincides with $f$ near $\p U$;
  \item in a neighborhood of $C$ the map $\wt f$ has a double $C$-fold,
   i.e. it is equivalent to the map \eqref{eq:double-fold2}
    restricted to $\Op(A=C\times D^1)$, where the
    frames $$\left(\ppxx,\dots,
      \ppxk\right)\;\;\hbox{and}\;\;\left(\ppxkk,\dots,\ppxdd\right)$$
    along $A$ provide the given trivializations of the bundles $K_-$
    and $K_+$;
  \item $df$ and ${\cal R}(d\wt f)$ are homotopic via a homotopy of
    epimorphisms fixed near $\p U$.
  \end{itemize}
\end{lemma}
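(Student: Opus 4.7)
The plan is to reduce to a local normal form around $C$ and then perform a parametric Morse-theoretic birth of a pair of cancelling critical points. The hypotheses supply all the data needed to set up such a normal form: since $f|_C$ is an embedding with trivialized normal bundle, a tubular neighborhood of $f(C) \subset X$ is identified with $f(C) \times \bbR^1$; since $f$ is a submersion along $C$ and the vertical bundle carries a trivialized splitting $K_- \oplus K_+$, a standard exponential-map construction produces a tubular neighborhood $\mathcal{N}\subset U$ of $C$ diffeomorphic to $C \times \bbR^1 \times \bbR^d$ with coordinates $(c,t,x_1,\dots,x_d)$ in which $f(c,t,x) = (f|_C(c), t)$, and such that $\left(\ppxx,\dots,\ppxk\right)$ trivializes $K_-$ and $\left(\ppxkk,\dots,\ppxd\right)$ trivializes $K_+$ along $C$.

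Pick smooth cutoffs $\rho:\bbR \to [0,1]$ with $\rho\equiv 1$ on $[-3/2,3/2]$ and $\mathrm{supp}\,\rho \subset [-2,2]$, and $\sigma:[0,\infty) \to [0,1]$ with $\sigma\equiv 1$ on $[0,1/2]$ and $\mathrm{supp}\,\sigma \subset [0,1]$. Set $\wt f = f$ on $U\setminus \mathcal{N}$, and on $\mathcal{N}$ define
$$\wt f(c,t,x) = \Bigl(f|_C(c),\; t + \rho(t)\,\sigma(|x|^2)\bigl(t^3 - 4t - |x_-|^2 + |x_+|^2\bigr)\Bigr).$$
On the plateau $\{\rho = \sigma = 1\}$ the second coordinate equals $t^3 - 3t - |x_-|^2 + |x_+|^2$, which is exactly the model double-fold $w_C(n+d,n,k)$ with singular locus $C\times\{\pm 1\}\times 0$; outside the support of $\rho\sigma$ it equals $t$, so $\wt f = f$ there and in particular near $\p U$. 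The frames read off from the tubular coordinates give the trivializations required by the second bullet.

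For the third bullet, $d\wt f$ agrees with $df$ away from $\mathcal{N}$. On $\mathcal{N}$ both bundle maps act as the identity on $TC$ and act on the $(t,x)$-directions by a $1\times(d+1)$ row: $df$ by $(1,0,\dots,0)$ and $d\wt f$ by $(\p_t G, \p_{x_1}G,\dots,\p_{x_d}G)$, where $G$ is the second component of $\wt f$. The regularization $\mathcal{R}(d\wt f)$ replaces the degenerate entry $\p_t G$ by a smooth positive function $\gamma$, and the linear homotopy $s \mapsto \bigl((1-s)\gamma + s,\,(1-s)\p_{x_1}G,\dots,(1-s)\p_{x_d}G\bigr)$ connects $\mathcal{R}(d\wt f)$ to $df$ through epimorphisms (its first entry stays positive), stationary where $\wt f = f$. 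The main obstacle is verifying that $\wt f$ has \emph{no} critical points outside the pair $C\times\{\pm 1\}\times 0$: an explicit computation of $\p_t G$ and $\p_{x_i} G$ shows that any stray zero must be produced by the error terms supported in the transition zones of $\rho$ or $\sigma$, and choosing these cutoffs with sufficiently small $C^1$-norm on their transition intervals keeps $\p_t G$ bounded away from zero there. This is the standard parametric Morse-birth construction applied fiberwise over $C$.
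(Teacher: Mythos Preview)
Your overall strategy---set up product coordinates $C\times\bbR\times\bbR^d$ near $C$ in which $f$ is the projection, insert the model double fold with cutoffs, then linearly interpolate the regularized Jacobian back to $df$---is exactly the paper's approach. The gap is in your final paragraph. The assertion that ``choosing these cutoffs with sufficiently small $C^1$-norm on their transition intervals keeps $\partial_t G$ bounded away from zero there'' is wrong and cannot be repaired by adjusting derivatives of the cutoffs. On the plateau $\rho\equiv 1$ you have $\partial_t G = 1+\sigma(|x|^2)(3t^2-4)$; at $t=0$ this equals $1-4\sigma(|x|^2)$, which vanishes wherever $\sigma=\tfrac14$. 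This is zeroth-order in $\sigma$, so no bound on $\sigma'$ helps, and a cutoff passing from $1$ to $0$ on a fixed interval must take the value $\tfrac14$ (and cannot have small $C^1$-norm anyway). Thus your formula does not preclude stray singularities in the $\sigma$-transition region, and since $\mathcal{R}(d\wt f)$ is supposed to equal $d\wt f$ outside $\Op(C\times D^1)$, such singularities would prevent it from being an epimorphism.

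The paper structures the interpolation so as to avoid this zeroth-order trap. Rather than a cutoff $\rho$ in the longitudinal variable, it first builds a single function $\lambda:[-2,2]\to[-2,2]$ that \emph{already} interpolates: $\lambda(z)=z^3-3z$ on $[-1,1]$, $\lambda(z)=z$ near $\pm 2$, with $\pm 1$ its only critical points. Then, with two cutoffs $\alpha,\beta$ in $|x|$ whose supports are nested so that $\beta\equiv 1$ wherever $\alpha>0$, it sets
\[
G(z,x)=\alpha(|x|)\,\lambda(z)+(1-\alpha(|x|))\,z+\beta(|x|)\,Q(x).
\]
Now $\partial_z G=\alpha\,\lambda'(z)+(1-\alpha)$ is positive whenever $\alpha=0$ or $|z|>1$, so the only region to inspect is $\{|z|\le 1,\ \alpha>0\}$, and there $\beta\equiv 1$, so the $\pm 2x_i$ contributions to $\partial_{x_i}G$ from $Q$ are undamped. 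This decoupling of the longitudinal interpolation (handled once and for all by $\lambda$) from the transverse damping (via the nested $\alpha,\beta$) is exactly what your single product cutoff $\rho(t)\sigma(|x|^2)$ lacks; reworking your formula along these lines closes the gap.
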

\begin{proof}
  There exists splittings $U_1=C\times[-2,2]\times D^d$, where $D^d$
  is the unit $d$-disc, and $U_2=\oC\times[-2,2]$ of neighborhoods
  $U_1\supset C=C\times 0\times 0$ in $U$ and $U_2\supset\oC=f(C)$ in
  $X$, such that the map $f$ has the form
  $$C\times[-2,2]\times [-1,1]^d\ni (v,z,x=(x_1,\dots, x_d))\mapsto
  (\ov=f(v),z)\in \oC\times[-2,2].
  $$ 
  Consider a $ C^\infty$ function $\lambda:[-2,2]\to[-2,2]$ which
  coincides with $z^3-3z$ on $[-1,1]$, with $z$ near $\pm 2$, and such
  that $\pm 1$ are its only critical points. Take two cut-off
  $C^\infty$-functions $\alpha,\beta:\R^+\to[0,1]$ such that
  $\alpha=1$ on $[0,\frac 14]$, $\alpha=0$ on $[\frac12,1]$, $\beta=1$
  on $[0,\frac12]$ and $\beta=0$ on $[\frac34, 1]$.  Set $Q(x)=-
  \sum^k_1 x^2_i + \sum_{k+1}^{d} x_j^2 $ and define first a map $\wh
  f:U_1\to U_2$ by the formula
  $$
  \wh f(v,z,x)=\left(\ov=f(v),\alpha(|x|)\lambda(z)+(1-\alpha(|x|))z
    +\beta(|x|)Q(x)\right),
  $$ 
  and then extend it to $M$, being equal to $f$ outside $U_1\subset
  U$.  The regularized differential of a linear deformation between
  $f$ an $\wh f$ provides the required homotopy between $df$ and
  ${\cal R}(d\wt f)$.
\end{proof}
\begin{remark}\label{rem:killing}{\rm
  It can be difficult to eliminate a double $C$-fold.  Even in the case
  $n=1$   this  is
  one of the central problems of Morse theory.} \end{remark}

\mn A map $f:M \rightarrow X$ is called {\it special folded}, if there
exist disjoint open subsets $U_1, \ldots, U_l \subset M$ such that the
restriction $f|_{M\setminus U}, \,\, U=\bigcup^l_1 U_i, $ is a
submersion (i.e.\ has rank equal $n$) and for each $i=1, \ldots, l$
the restriction $f|_{U_i}$ is a spherical double fold. {\it In
  addition,} we require that the images of all fold components bound
balls in $X$.

\mn The singular locus $\Sigma(f)$ of a special folded map $f$ is a
union of $(n-1)$-dimensional double spheres $S^{n-1}\times
S^0_{\,(i)}=\Sigma(f|_{U_i}) \subset U_i$. It is convenient to fix for
each double sphere $S^{n-1}\times S^0_{\,(i)}$ the corresponding
annulus $S^{n-1}\times D^1_{\,(i)}$ which spans them.  Notice that although the
restriction of $f$ to each annulus $S^{n-1}\times\Int D^1_{\,(i)}$ is an
{\it embedding}, the restriction of $f$ to the union of all the annuli
$S^{n-1}\times \Int D^1_{\,(i)}$ is, in general, only an {\it immersion},
because the images of the annuli may intersect each other.  Using an
appropriate version of the transversality theorem we can arrange   by a $C^{\infty}$-small
perturbation of $f$ that all combinations of images of its fold components intersect transversally.  The differential $df:TM \rightarrow TX$ can be
regularized to obtain an epimorphism ${\cal R} (df):TM \rightarrow
TX$.  To get ${\cal R} (df)$ we regularize $df|_{U_i}$ for each double
fold $f|_{U_i}$.

\mn In our proof of Theorem \ref{thm:main1} we will use the following
result about special folded maps.

\begin{theorem}\label{thm:special-folded}
  {\bf [Special folded mappings]}\label{thm:special} Let $F: TM
  \rightarrow TX$ be an epimorphism which covers a map $f:
  M\rightarrow X$.  Suppose that $f$ is a submersion on a neighborhood
  of a closed subset $K \subset M$, and that $F$ coincides with $df$
  over that neighborhood.  Then if $d>0$ then there exists a special folded map $g:
  M \rightarrow X$ which coincides with $f$ near $K$ and such that
  ${\cal R}(dg)$ and $F$ are homotopic rel. $TM|_K$.  Moreover, the
  map $g$ can be chosen arbitrarily $C^0$-close to $f$ and with
  arbitrarily small double folds.
\end{theorem}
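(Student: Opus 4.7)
My plan is to reduce Theorem~\ref{thm:special-folded} to the Wrinkling Theorem of \cite{EM97}. That theorem, applied to an epimorphism $F: TM \to TX$ covering a map $f$ that is already a submersion near $K$ with $F = df$ there, produces a homotopy of $F$ rel $TM|_K$ to the regularized differential $\mathcal{R}(d\tilde g)$ of a \emph{wrinkled map} $\tilde g$ that coincides with $f$ near $K$. The singular locus of $\tilde g$ is a disjoint union of \emph{wrinkles}, each with the standard local model whose singular set is an $(n-1)$-sphere consisting of two hemispheres of folds of adjacent indices $k$ and $k+1$ meeting along an $(n-2)$-equator of cusps. Moreover, the Wrinkling Theorem allows the wrinkles to be taken arbitrarily small and $\tilde g$ arbitrarily $C^0$-close to $f$.

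The next step is purely local: replace each wrinkle by a spherical double fold in the sense of Section~\ref{sec:double-folds}. In the wrinkle chart, the map has the form
\begin{equation*}
(y,z,x)\ \mapsto\ \bigl(y,\ z^3 + 3(|y|^2-1)z + Q(x)\bigr),
\end{equation*}
whose singular locus $\{z^2 + |y|^2 = 1\}$ is a sphere with an equator of cusps. I would explicitly construct a one-parameter family of smooth maps (fixed near the boundary of the chart) interpolating between this wrinkle and a spherical double fold of local model
\begin{equation*}
(y,z,x)\ \mapsto\ \bigl(y,\ z^3 - 3z + Q(x)\bigr)
\end{equation*}
on a concentric smaller disc, obtained by smoothly replacing the coefficient $|y|^2 - 1$ of $z$ by the constant $-1$ in the interior while matching it outside. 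Along this deformation the Jacobian entry $3(z^2 + |y|^2 - 1)$ degenerates along the moving singular set; replacing it by a positive function, as in the definition of $\mathcal{R}$, yields a canonical regularization for each member of the family and thus a homotopy between $\mathcal{R}(d\tilde g)$ and $\mathcal{R}(dg)$ through epimorphisms supported in the chart.

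Finally, I would verify the remaining conditions. The bounding-balls condition on fold images is free: because the Wrinkling Theorem supplies wrinkles of arbitrarily small $C^0$-size, each resulting spherical double fold lies in an arbitrarily small coordinate chart of $X$, inside which its image sphere obviously bounds a ball. A generic $C^0$-small perturbation of $g$ (which does not alter the singular type) then makes all multi-intersections of fold images transverse. Combining the homotopies from the two steps gives $\mathcal{R}(dg) \simeq F$ rel $TM|_K$, and $C^0$-closeness of $g$ to $f$ together with smallness of double folds are inherited from the Wrinkling Theorem.

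The main obstacle is the local replacement step: one must show that the deformation from the wrinkle model to the spherical-double-fold model can be carried out so that the regularized differentials are connected through epimorphisms fixed near the chart boundary. This is essentially a computation for the explicit family above, relying on the fact that the sign of the degenerate Jacobian factor is, in both endpoints, allowed to be freely replaced by any positive function in the regularization procedure, so the two regularizations are visibly homotopic within the space of epimorphisms.
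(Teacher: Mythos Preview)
Your reduction to the Wrinkling Theorem is correct and matches the paper's first step. The gap is in your local replacement step. Replacing the coefficient $|y|^2-1$ of $z$ by a function $\psi(y)$ that equals $-1$ on an inner disc and $|y|^2-1$ near the boundary of the chart does \emph{not} produce a spherical double fold. The singular locus of $(y,z,x)\mapsto(y,\,z^3+3\psi(y)z+Q(x))$ is still $\{x=0,\ z^2=-\psi(y)\}$, a single $(n-1)$-sphere, and the set $\{z=0,\ \psi(y)=0,\ x=0\}$ is a nonempty equator of cusps (since $\psi$ must pass from $-1$ to positive values in order to match the submersion outside). You have deformed the wrinkle without eliminating its cusp locus. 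A spherical double fold, by contrast, has singular locus $S^{n-1}\times S^0$---two disjoint spheres of folds with no cusps---and this cannot be reached by varying the coefficient within your family: the topology of the singular set has to change.

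The paper's argument is different in kind: it removes the cusp equator by a \emph{cusp eliminating surgery}. One chooses an embedded $(n-1)$-disc $A$ with $\partial A=\Sigma^{11}(w)$, tangent to the characteristic vector field along the boundary and suitably framed (Proposition~\ref{prop:preparation}), and then modifies the map in a tubular neighborhood of $A$ so that the two fold hemispheres separate into two disjoint spheres (Propositions~\ref{prop:cusp-surgery} and~\ref{prop:wrinkle-to-double}). This is also where the hypothesis $d>0$ enters: part~(b) of Proposition~\ref{prop:preparation} uses the extra $x$-direction to push $A$ off the image of the fold, ensuring that $w|_{\Sigma(w)\cup A}$ is an embedding and hence that the resulting fold images are embedded in $X$, as the definition of \emph{special} folded map requires. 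Your proposal never invokes $d>0$; the remark about small charts addresses only the ``bounds a ball'' clause, not embeddedness of the image, and indeed for $d=0$ the theorem fails (see the remark following its statement).
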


Let us stress again the point that a
   special folded map $f: M \to X$ by definition has only spherical
  double folds,   each fold component $C \subset M$ is
  a sphere whose image  $\oC \subset X$  is embedded  and bounds a ball in
  $X$. 
 \begin{remark}{\rm In the equidimensional case ($d=0$) it is not  possible, in general, to make images of fold components embedded. See Appendix A below.
 }
 \end{remark}
 
\mn Theorem \ref{thm:special} is a modification of the Wrinkling
Theorem from \cite{EM97}.  We formulate the Wrinkling Theorem (see
Theorem \ref{thm:wrinkled}) and explain how to derive
\ref{thm:special} from \ref{thm:wrinkled} in Appendix A below.
 
\medskip

Special folded mappings give a nice representation of (unstable)
formal fibrations.  As a class of maps, it turns out to be too small
for our purposes.  Namely, we wish to perform certain constructions
(e.g.\ surgery) which does not preserve the class of special folded
maps.  Hence we consider a larger class of maps which allow for these
constructions to be performed, and still small enough to admit a
homotopically canonical extension to a formal fibration.  This is the
class of enriched folded maps.

\subsection{Enriched folded maps and their
  suspensions}\label{sec:enriched}

Recall that a folded map is, by definition, a map $f: M^{n+d} \to X^n$
which locally is of the form (\ref{eq:fold}).    In this section we study a certain extra
structure on folded maps which we dub \emph{framed membranes}.

\begin{definition}\label{defn:membrane}
  Let $M^{n+d},X^n$ be closed manifolds and $f: M \to X$ be a folded
  map.  A \emph{framed membrane} of index $k$, $k=0,\dots, d$, for $f$ is   
  a compact, connected $n$-dimensional
  submanifold $V  \subset  M$ with boundary  $\p V=V \cap \Sigma(f)$, together with a {\it framing} $K=(K_-, K_+)$ where  $K_-, K_+$ are {\it trivialized} subbundles of $(\Ker\, df)|_V$ of dimension $k$ and $d-k$, respectively, such that
   
  \begin{enumerate}[(i)]
 
  \item the restriction $f|_{\Int V}: \Int V \to X$ is an embedding;
  \item $K_\pm$ are transversal to each other and to $TV$;
  \item  there exists  a co-orientation of the image $\oC$ of each fold component $C\subset\p V$   such that $K_\pm|_{C} \subset \Cone_\pm(C)$;
  \end{enumerate}
\end{definition}
Thus,  over $\Int V$ we have $ Ker\,df =K_-\oplus K_+$,  while  over $\p V$   
 $ Ker\,df $   splits as $K_-\oplus K_+\oplus\lambda$, where $\lambda=\lambda(C)$ is a line bundle contained in $\Cone_+(C)\cup\Cone_-(C)$.

A  boundary component of a membrane $V$ is called  {\it positive} if  $\lambda(C)\subset\Cone_+ (C)$, and {\it negative} otherwise. We will denote by $\p_+(V,K)$ and $\p_-(V,K)$, respectively,  the union of  positive and negative boundary components of $V$.
  Note that the coorientation of a component $\oC\subset \p\oV$ implied by the definition of a framed membrane is given by inward normals to $\p\oV$ if $C\subset\p_+(V,K)$, and by outward normals to $\p\oV$ if
$C\subset\p_-(V,K)$.
The index of  the fold component $C$ is equal  to $k$ in the former case, and to    $k+1$  in the latter one.

We will call  a framed  membrane $(V,K)$ {\it pure}  if either $\p_+(V,K)=\varnothing$, or $\p_-(V,K)=\varnothing$. Otherwise we call  it  {\it mixed}.

Switching the roles of the subbundles $K_+$ and $K_-$  gives a {\it dual} framing $\oK=(\oK_-=K_+,\oK_+=K_-)$. The index of the framed membrane $(V,\oK)$ equals $d-k$, and we also have
$\p_\pm(V,\oK)=\p_\mp(V.K)$.

 \begin{definition}\label{defn:enrichment}
  An \emph{enriched} folded map is a pair $(f,\fe)$ where $f: M \to X$
  is a folded map and $\fe$ is an \emph{enrichment} of $f$, consisting
  of finitely many disjoint framed membranes $(V_1,K_1), \dots, (V_N,K_N)$
  in $M$ such that $\partial V = \Sigma(f)$, where $V$ is the
  union of the $V_i$.  \end{definition}
We point out that while the definition implies $f$ is injective on
each $V_i$, the images $f(V_i)$ need not be disjoint. 

\begin{example}\label{ex:double-fold}
  {\rm The double $C$-fold $w_C(n+d,n,k)$ defined by
    \eqref{eq:double-fold2} has the annulus $A=C\times D^1\times
    0\subset V\times\R\times\R^d$ as its membrane. Together with
    the frame $K=(K_-,K_+)$, where the subbundles $K_-$ and $K_+$ are generated, respectively by 
    $$\left(\ppxx,\dots,
      \ppxk\right)\;\;\hbox{and}\;\;\left(\frac{\p}{\p x_{k+1}},\dots,
      \frac{\p}{\p x_{d}}\right) $$ along $A$  the membrane defines a
    {\it canonical enrichment} of the double $C$-fold $w_C(n+d,n,k)$.
    In particular, {\it  any special folded map has a canonical enrichment.
    }  Note that we have $\p_+A=A\times(-1)$ and $\p_-A=A\times 1$.
  }
\end{example}

 From the tubular neighborhood theorem and a parametrized version of
Morse's lemma, we get

\begin{lemma}\label{lemma-replacing-eq:14}
 Let $(V,K)$
  be a connected framed membrane for an enriched folded map  $(f,\eps)$. Let $C $ be a connected component  of $V$, and $\oC=f(C)$   its image.   Then
  there is a tubular neighborhood $U_C \subset M$ of $C$ with
  coordinate functions
  \begin{align*}
    (y,u,x): U_C \to C \times \R \times \R^d
  \end{align*}
  and a tubular neighborhood $U_{\overline{C}} \subset X$ with
  coordinate functions 
  \begin{align*}
    (y,t): U_{\overline{C}} \to C \times \R,
  \end{align*}
  such that  we have
  \begin{description}
  \item{-} $\frac{\p}{\p u}\in\lambda(C)$  along $C$;
  \item{-} the vector field $\frac{\p}{\p t}$ defines the coorientation of $\oC$ implied by the framing of $V$;
  \item{-} the vector fields  $\frac{\p}{\p x_1}\big|_{V\cap\Op C},\dots, \frac{\p}{\p x_k}\big|_{V\cap\Op C}$   belong to $K_-$ and provide  its  given trivialization, while 
   $\frac{\p}{\p x_{k+1}}\big|_{V\cap\Op C},\dots, \frac{\p}{\p x_d}\big|_{V\cap\Op C}$    provide  the given trivialization of $K_+$;
\item{-} in these local coordinates, $f$ is given by
  \begin{align*}
    f(y,u,x) = (y,t(x,u)),
  \end{align*}
  where $$t(x,u) = Q_k(x) \pm u^2=   - \sum_{i=1}^k x_i^2 + \sum_{i=k+1}^d x_i^2\pm u^2,$$ 
  and   $V\cap U_C$ coincides   with $\{x=0, \pm u\geq 0\}$, where the signs in the above formulas coincide with the sign of  the boundary component $C$ of the framed membrane $(V,K)$.
  \end{description}
\end{lemma}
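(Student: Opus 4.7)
The plan is to construct the chart in three stages, combining the tubular neighbourhood theorem with a parametric version of the Morse lemma.

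First I would set up the target chart. Because $f|_{\Int V}$ is an embedding that extends continuously to $C=\partial V$, the restriction $f|_V$ is a proper smooth injection, immersive on $C$ by the fold normal form~\eqref{eq:fold}, and hence an embedding near $C$; in particular $\overline{C}=f(C)$ is an embedded codimension-one submanifold of $X$. The framing $K$ picks a coorientation of $\overline{C}$ (inward to $\overline{V}$ when $C\subset\partial_+V$, outward when $C\subset\partial_-V$), and the tubular neighbourhood theorem supplies a chart $(y,t)\colon U_{\overline{C}}\to \overline{C}\times\R$ in which $\partial/\partial t$ realises this coorientation.

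I would then build coordinates on $V$ and extend them off $V$. The function $t\circ f|_V$ on $V$ vanishes to exactly second order on $C$ with definite sign determined by the framing, since $\lambda(C)\subset\Cone_\pm(C)$; so a one-variable Morse lemma with parameter $y$ yields a smooth coordinate $u$ transverse to $C$ in $V$ with $f|_V(y,u)=(y,\pm u^2)$, $C=\{u=0\}$, and $\partial/\partial u|_{u=0}\in\lambda(C)$. To extend off $V$, I would exponentiate the trivialised splitting $K_-\oplus K_+$ of the normal bundle of $V$ in $M$, using any auxiliary metric, into tubular coordinates $(x_1,\dots,x_d)$, producing coordinates $(y,u,x)$ on a neighbourhood $U_C$ with $V\cap U_C=\{x=0,\pm u\geq 0\}$ and with $\partial/\partial x_i|_V$ providing the given trivialisations of $K_-$ and $K_+$. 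In these coordinates $f=(y,s(y,u,x))$ with $s(y,u,0)=\pm u^2$ and $\partial s/\partial x_i|_{x=0}\equiv 0$, since $K_\pm\subset\Ker\,df|_V$, so
\[
s(y,u,x)=\pm u^2+\tfrac12\,x^{\top} h(y,u)\,x+O(|x|^3),
\]
where by condition (iii) of Definition~\ref{defn:membrane} the symmetric matrix $h(y,u)$ is negative definite on the $K_-$-block and positive definite on the $K_+$-block.

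Finally, I would apply a parametric Morse lemma in the $x$-variables, with $(y,u)$ as parameters, to produce a smooth fibrewise change of the $x$-coordinates fixing $V$ pointwise and preserving each of $K_-$ and $K_+$, bringing $s$ into the target normal form $Q_k(x)\pm u^2$. The main obstacle will be arranging this last change to respect the prescribed trivialisations of $K_\pm$ along $V$; I would handle this by first performing a $(y,u)$-smooth $h$-Gram--Schmidt within each of $K_-$ and $K_+$ to diagonalise $h$ block by block, and then applying the usual parametric Morse reduction, which is possible because the signature structure of $h$ already matches the splitting $K_-\oplus K_+$ dictated by the framed membrane.
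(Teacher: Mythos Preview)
Your approach is exactly the paper's: the authors give only the one-line justification ``from the tubular neighborhood theorem and a parametrized version of Morse's lemma,'' and in the subsequent remark even note that a $C^1$-small perturbation of the data near $C$ would suffice for everything they later need. Your three-stage plan (target chart, coordinates on $V$ via a one-variable parametric Morse lemma, then tubular coordinates from the framing $K_-\oplus K_+$ followed by a parametric Morse lemma in $x$) is a correct fleshing-out of that sentence.

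Two small slips to fix. First, $f|_V$ is \emph{not} immersive along $C$: in the very normal form you are aiming for, $f|_V(y,u)=(y,\pm u^2)$ degenerates at $u=0$. What you actually need---that $\overline{C}$ is an embedded hypersurface near any point---follows directly from the fold normal form for $f|_C$, not from $f|_V$. Second, after exponentiating $K_-\oplus K_+$ you write $f=(y,s(y,u,x))$, but the $\overline{C}$-component of $f$ need not be exactly $y$ off $V$; insert the standard step of replacing $y$ by $y'=(\text{projection to }\overline{C})\circ f$, which is a legitimate coordinate change near $C$ since that projection is a submersion there and restricts to the identity on $V$. After this, your Taylor expansion and block Gram--Schmidt argument go through as written.
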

\begin{remark} {\rm While the use of    normal form \ref{lemma-replacing-eq:14} is convenient but  it is not necessary.
Indeed, it   is obvious that all the stated properties can be achieved by a $C^1$-small perturbation of our data near $C$, and this will  suffice  for our purposes.}
\end{remark}
 
  \medskip
  
A \emph{suspension} of a folded map $f:M\to X$ is a surjective
homomorphism $\Phi :TM\oplus\eps^1\to f^*TX\oplus\eps^1$ such that
$\pi_X\circ \Phi|_{TM}\circ i_M=df $, where $\pi_X$ is the projection
$TX\oplus\eps^1\to TX$ and $i_M:TX\to TX\oplus0 \hookrightarrow
TX\oplus\eps^1$ is the inclusion.  The main reason for considering
enrichments of folds is that an enriched folded map  admits a suspension
whose  homotopy class depends only on the enrichment.
\begin{proposition}\label{prop:canonical-enrichment}
  To an enriched folded map $(f,\fe)$ we can associate a homotopically
  well defined suspension $\cL(f,\fe)$.
\end{proposition}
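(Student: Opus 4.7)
The plan is to define $\cL(f,\fe)$ locally on a tubular neighborhood of each fold component $C\subset\p V$ via the normal form of Lemma~\ref{lemma-replacing-eq:14}, and to glue the local models to the trivial suspension $df\oplus\mathrm{id}_{\eps^1}$ on the complement of a neighborhood of $\Sigma(f)$. Writing a candidate bundle map $\Phi:TM\oplus\eps^1\to f^*TX\oplus\eps^1$ in block form
\begin{align*}
\Phi=\begin{pmatrix} A & B\\ C & D\end{pmatrix}
\end{align*}
with respect to the two summands, the compatibility condition $\pi_X\circ\Phi|_{TM}\circ i_M=df$ says exactly that $A=df$, while $(B,C,D)$ are free subject only to the surjectivity of $\Phi$.

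For the local model near a fold $C$, in the coordinates $(y,u,x)$ on $M$ and $(y,t)$ on $X$ provided by Lemma~\ref{lemma-replacing-eq:14} (in which $f(y,u,x)=(y,Q_k(x)\pm u^2)$, the membrane direction $\lambda(C)$ is $\p/\p u$, and the framings $K_\pm$ are spanned by the relevant $\p/\p x_i$), I would define $\Phi$ to agree with $df$ on the $\p/\p y_j$ and $\p/\p x_i$ directions (with zero $\eps^1$-component) and on the two-plane spanned by $(\p/\p u,\,e_M)$ to be given by the matrix
\begin{align*}
\begin{pmatrix} \pm 2u & -1\\ 1 & \pm 2u\end{pmatrix}
\end{align*}
acting into the target basis $(\p/\p t,\,e_X)$, where $e_M, e_X$ are the canonical generators of the source and target $\eps^1$ summands. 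This matrix has determinant $4u^2+1>0$ throughout the chart, so $\Phi$ is surjective everywhere on it --- including at the fold $\{u=x=0\}$ --- and the constraint $A=df$ is manifest.

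To extend globally, I would interpolate between this local model and $df\oplus\mathrm{id}_{\eps^1}$ on an annular collar surrounding the fold. On the collar $f$ is a submersion, so $df$ has full rank $n$, and a short linear-algebra computation shows that in the fiberwise affine space of bundle maps $\Phi$ with $A=df$, the non-surjective locus has real codimension $d+1$: non-surjectivity forces $C|_{\Ker(df)}=0$ (codim $d$) and then $D=\ell(B)$ for the resulting $\ell:f^*TX\to\eps^1$ (one further codimension). For $d\geq 1$ the complement is path-connected, so the two surjective models can be joined by a path of compatible surjective maps, and smoothing this path over the collar produces a globally defined surjective $\cL(f,\fe)$.

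Finally, to verify well-definedness of the homotopy class $[\cL(f,\fe)]$, I would note that the continuous choices in the construction (normal-form chart, tubular neighborhood sizes, interpolation profile) form a contractible parameter space once $\fe$ is fixed, and apply a parametric version of the same codimension argument. In the main case $d=2$ the codimension is $3$, so the space of compatible surjective maps is even simply connected, which produces a canonical homotopy between any two outputs. The main technical obstacle is thus condensed into the connectedness (and, for homotopy uniqueness, simple connectedness) of the space of compatible surjective bundle maps, which the codimension estimate supplies.
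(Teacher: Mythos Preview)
Your construction has a genuine gap: it uses the enrichment only through the normal form near each fold component $C=\partial V$, and then attempts to glue to the trivial suspension $df\oplus\mathrm{id}$ everywhere else. But the membrane $V$ is a \emph{global} datum, and the paper's $\cL(f,\fe)$ depends on all of $V$, not just on its germ along $\partial V$. Indeed, the paper constructs a function $\theta$ equal to $-\pi$ deep inside $V$ and $+\pi$ outside a neighborhood of $V$, so that the $(X,q)$ column of~\eqref{eq:21} records where $V$ sits; the remark following the proof notes that $V$ can be recovered from $\cL(f,\fe)$ as the locus where $q\leq 0$ and $(df,\alpha)$ fails to surject. Your construction, by contrast, would assign the same suspension to two enrichments whose membranes agree near $\Sigma(f)$ but differ globally.

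The place where this manifests technically is your interpolation step. Your codimension computation is correct: over the submersion locus the fiber of compatible surjective $\Phi$'s is the complement of a codimension-$(d+1)$ affine subspace, hence has the homotopy type of $S^d$. But path-connectedness of the fiber only lets you join two \emph{values} at a single point; to interpolate two \emph{sections} over the collar (which is an open subset of $M^{n+d}$), obstruction theory puts the obstructions in $H^k$ of the collar with coefficients in $\pi_{k-1}(S^d)$, and since $\pi_d(S^d)=\Z$ these can be nonzero as soon as $n\geq 1$. The same issue obstructs uniqueness of the interpolation up to homotopy. So neither existence nor well-definedness of your glued suspension follows from the connectivity you establish. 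The paper sidesteps this entirely by writing down an explicit global extension of the coordinate $u$ over a tubular neighborhood of all of $V$ (via the expansion~\eqref{eq:15}) and then an explicit $(X,q)$ in terms of $\theta$; no abstract extension argument is needed.
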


\begin{proof}
  The suspension $TM\oplus \epsilon^1 \to f^* TX \oplus \epsilon^1$
  will be of the form
  \begin{align}\label{eq:21}
    \begin{pmatrix}
      df & X\\
      \alpha & q\\
    \end{pmatrix},
  \end{align}
  where $\alpha: TM \to \epsilon^1$ is a 1-form, $X$ is a section of
  $f^* TX$, and $q$ is a function.

  The 1-form $\alpha$ is defined as $\alpha = du$ near $\partial V$,
  using the local coordinate $u$ on $U_C$.  To extend it to a 1-form
  on all of $M$, we  will extend the function $u$.  We first construct
  convenient local coordinates near $V$.

  The map $f: M \to X$ restricts to a local diffeomorphism on
  $\Int(V)$.  The local coordinate functions $ x=(x_1,\dots, x_d)$  near $\p V$ from the normal form \ref{lemma-replacing-eq:14} extend to $\Op V$ in such a way that
      the vector fields
  $\frac{\partial}{\partial x_1}, \dots, \frac{\partial}{\partial
    x_k}$ along $V$ generate the bundle $K_-$, while  $\frac{\partial}{\partial x_{k+1}}, \dots, \frac{\partial}{\partial
    x_k}$ along $V$ generate the bundle $K_+$.   The tubular neighborhood theorem
  then gives a neighborhood $U_{\Int V} \subset M$ with coordinate
  functions
  \begin{align}\label{eq:20}
    (\wt y,x): U_{\Int V} \to \Int V \times \R^d
  \end{align}
  so that the fibers of $\wt y$ are the fibers of $f$, or, more precisely, $f(y,x)=f(y,0)$.
   (The   function $\wt y$ and the     function $y$ of the Lemma \ref{lemma-replacing-eq:14}   are
  not directly related; in fact they have codomains of different
  dimension.)  On this overlap, the function $u$ of
  Lemma~\ref{lemma-replacing-eq:14} can be expressed as a function of
  the local coordinates~\eqref{eq:20}.  Indeed, we have $f(\wt y ,x) = Q(x)
  \pm u(\wt y,x)^2$, so
  \begin{align*}
    \pm u(\wt y,x)^2 = f(\wt y,x) - Q(x) = f(\wt y,0) - Q(x) =  \pm u(\wt y ,0)^2 -
    Q(x)
  \end{align*}
  so
  \begin{align*}
    u(\wt y,x) = \sqrt{u(\wt y,0)^2 \mp Q(x)}.
  \end{align*}
%%%%  Expanded this a bit:  (Soren)
% is given as
%   \begin{align*}
%     u(y,x) = \sqrt{c(y) \pm Q(x)},
%   \end{align*}
%   where $c(y) = |u(y,0)|$. 
  If we Taylor expand the square root, we get
  \begin{align}\label{eq:15}
    u(\wt y,x) = \gamma(\wt y) \mp \delta(\wt y) Q(x) + o(|x|^2).
  \end{align}
  for positive functions $\gamma,\delta$.  The function $Q$ extends
  over $U_{\Int V}$ (use the same formula in the local coordinates of
  the tubular neighborhood of $\Int V$), and hence we can also extend
  $u$ to a neighborhood of $V$ inside $U_V = U_C \cup U_{\Int V}$,
  such that on $U_{\Int V}$ is satisfies~(\ref{eq:15}).  Extend $u$ to
  all of $M$ in any way, and let $\alpha = du$.

  We have defined a bundle map $(df,\alpha): TM \to f^* TX \oplus
  \epsilon^1$ which is surjective whenever $\alpha |_{\Ker df} \neq
  0$.  Near $V$, $\alpha |_{\Ker df} = 0$ precisely when $x = 0 \in
  \R^d$.  It remains to define the section $(X,q)$ of $f^* TX \oplus
  \epsilon^1$.  Pick a function $\theta: U_V \to [-\pi,\pi]$ such that
  $u = - \sin \theta$ near $\partial V$, is negative on $\Int V$ and
  equal to $-\pi$ on $V - U_C$, and which is equal to $\pi$ outside a
  small neighborhood of $V$.  Then set
  \begin{align}\label{eq:suspension}
    X(u) &= (\cos\theta) \frac{\partial}{\partial u}\\
    q(u) &= \sin\theta
  \end{align}
\end{proof}
\begin{remark}\label{rem:neg-stab} 
{\rm Changing the sign of $X(u)$ in the formula \eqref{eq:suspension}
provides another suspension of the enriched folded map $(f,\fe)$ which we will denote
by $\mathcal{L}_-(f,\e)$.  If $n$ is even then the two suspensions  $\mathcal{L}(f,\e)$ and $\mathcal{L}_-(f,\e)$ are homotopic.  
}
%Putting 
\end{remark}
 
\begin{remark}
{\rm 
  Most (but not all) of the data of an enrichment $\e$ of a folded map $f: M \to X$
  can be reconstructed from the suspension $\Phi = \mathcal{L}(f,\fe)$. 
  If we write $\Phi$ in the matrix form~\eqref{eq:21}, the manifold
  $V$ is the set of points with $q \leq 0$ and $(df,\alpha): TM \to
  f^*TX \oplus \epsilon^1$ not surjective.  The partition of $\partial
  V$ into $\partial_\pm (V,K)$ is determined by the  coorientation of images of 
  the fold components. On the other hand, the splitting $\Ver=K_+\oplus K_-$    cannot  be
  reconstructed from the suspension.}
\end{remark}

\begin{lemma}\label{lm:special-enrichment}
  Let $f:M\to X$ be a special folded map.  Then the suspension
  $\cL(f,\fe)$ (as well as the suspension $\mathcal{L}_-(f,\fe)$) of the canonically enriched folded map $ (f,\fe)$ is
  homotopic to its stabilized regularized differential ${\mathcal R}
  df$.
\end{lemma}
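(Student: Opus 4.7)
To prove the lemma, I plan to reduce to the local model of a single spherical double fold and exhibit an explicit homotopy through bundle epimorphisms. I first localize: both $\cL(f,\fe)$ and $\mathcal{R}(df)\oplus\mathrm{id}_{\epsilon^1}$ are bundle epimorphisms $TM\oplus\epsilon^1\to f^*TX\oplus\epsilon^1$ that coincide with $df\oplus\mathrm{id}_{\epsilon^1}$ outside an arbitrarily small neighborhood of the union of annular membranes $A_i=S^{n-1}\times D^1\times\{0\}$ constituting the canonical enrichment (Example \ref{ex:double-fold}). Hence the homotopy can be built one membrane at a time, relative to the complement of a tubular neighborhood of $A_i$, and since each such neighborhood is by definition equivalent to the standard double fold $w_{S^{n-1}}(n+d,n,k)$, the problem reduces to a single normal-form model.

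Next, in the normal coordinates $(v,z,x)\in S^{n-1}\times\R\times\R^d$ I will write both epimorphisms explicitly. The regularization $\mathcal{R}(df)$ replaces only the Jacobian entry $3(z^2-1)$ by a positive function $\gamma(z)$; its stabilization appends an identity on $\epsilon^1$. For the suspension, I will choose the function $u$ of Proposition \ref{prop:canonical-enrichment} to depend only on $(z,x)$ and to match the normal coordinate of Lemma \ref{lemma-replacing-eq:14} near each of the two fold components of $\partial V$. Then $\theta$ depends on $(z,x)$ alone, and both epimorphisms factor as the identity on $TS^{n-1}$ plus a fiberwise surjective linear map $\R\oplus\R^d\oplus\R\to\R\oplus\R$.

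Finally, in this $2\times(d+2)$ matrix setting, I will construct the homotopy within the Stiefel manifold of full-rank matrices in two stages. Stage one rotates the $\epsilon^1$-column $(\cos\theta,\sin\theta)$ (and symmetrically the $\epsilon^1$-row containing $\alpha$ and $q$) into the trivial configuration, using a path of angles chosen so that at the folds the $\epsilon^1$-factor continuously supplies the direction missing from $\mathrm{im}(df)$. Stage two interpolates linearly between $3(z^2-1)$ and $\gamma(z)$ in the now-standard block; surjectivity is automatic because the $\epsilon^1$-factor already surjects onto the second target factor. The argument for $\cL_-(f,\fe)$ is identical up to the sign of the rotation.

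The hard part is guaranteeing surjectivity in stage one at the fold singularities, where $df$ has rank $n-1$ and the missing target direction must be supplied throughout the interpolation. The defining relation $u=-\sin\theta$ near $\partial V$ together with the local form $t(x,u)=Q(x)\pm u^2$ pins down how $\alpha=du$ and the rotating column jointly cover the cokernel of $df$ at the singular locus; a compatible choice of the rotation path then yields the needed surjective homotopy, and the ``fold images bound balls'' hypothesis of a special folded map plays no role beyond guaranteeing existence of the canonical enrichment in the first place.
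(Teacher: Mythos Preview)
Your proposal is correct and follows exactly the route the paper indicates: the paper's entire proof is the single sentence ``This can be seen in the local models,'' and your outline is precisely a fleshing-out of that sentence---localize to one spherical double fold in its normal form, write both epimorphisms as $2\times(d+2)$ matrices over $S^{n-1}$, and homotope within the Stiefel fiber. Your identification of the delicate point (keeping rank full while rotating the $\epsilon^1$ column/row at the fold locus) is apt, and your use of the relation $u=-\sin\theta$ near $\partial V$ to control the cokernel direction is the right mechanism; nothing further is needed.
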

\begin{proof}
  This can be seen in the local models.
% \marginpar{Should we expand?
%     Will not illuminate much, though. -Soren}
\end{proof}
%%%%%%%%%%%%%%%%%%%%%%%%%%%%%
\subsection{Fold surgery}\label{sec:surgery-general}

As in the previous section, we study a folded map $f: M \to X$ with
cooriented  fold images.  Let $C \subset \Sigma(f)$ be a connected
component of index $k$, and let $\overline{C} \subset X$ be its
image.  For $p \in \oC$, the fibers $f^{-1}(p)$ has a singularity.
There are two directions in which we can move $p$ away from $\oC$ to
resolve the singularity and get a manifold.  The manifold we get by
moving $p$ to the positive side (with respect to the coorientation)
differs from the manifold we get by moving $p$ to the negative side by
a surgery of index $k$, i.e.\ it has an embedded $D^k \times \partial
D^{d-k}$ instead of a $\partial D^k \times D^{d-k}$.
% When the point $p\in X$ travels across a fold component $\oC$ of
% index $k$ of a folded map $f:M\to X$, the fiber $f^{-1}(p)$
% experiences a Morse surgery of the same index.
If $\oC$ bounds an embedded domain $\oP\subset X$, then one can try to
prevent the surgery from happening, or, which is the same, to perform
an inverse Morse surgery fiberwise in each fiber $f^{-1}(p),
p\in\oP$. In this section we describe this process, which we call
\emph{fold eliminating surgery} in more detail.
 
\subsubsection{Surgery template}\label{sec:surgery-template}
 
We begin with a local model for the surgery.  Let $P$ be an
$n$-dimensional oriented manifold with collared boundary $\partial P$.
The collar consists of an embedding $\partial P \times [-1,0] \to P$,
mapping $(p,0) \mapsto p$.
Extend $P$ by gluing a bicollar $U= \partial P \times [-1,1]$,
  
\begin{align}\label{eq:19}
  \tilde P = P \mathop{\cup}\limits_{\partial P \times [-1,0]} U.
\end{align}

Let $Q $ be a quadratic form of index $k$ on $\R^{d+1}$:
 
\begin{align*}
  Q(x)& =-||x_-||^2+||x_+||^2,
\end{align*}
where $x=(x_1,\dots, x_{d+1})\in\R^{d+1}, \;x_-=(x_1,\dots, x_k),\;
x_+=(x_{k+1},\dots, x_{d+1})$.

Let $H \subset \R^{d+1}$ be the domain
\begin{align*}
  H  &= \left\{ |Q|\leq 1,\,||x_+||\leq 2\right\}, 
\end{align*}
We are going to use the map $Q: H \to [-1,1]$ as a prototype of a
fold.  The boundary of the (possibly singular) fiber $Q^{-1}(t)$ 
can be identified with $S^{k-1} \times S^{d-k-1}$
via the diffeomorphism
\begin{align}\label{eq:22}
  \{Q = t, \|x_+\| = 2\} & \to S^{k-1} \times S^{d-k-1}\\
  (x_-, x_+) & \mapsto \left(\frac{x_-}{\|x_-\|},
  \frac{x_+}{\|x_+\|}\right).
\end{align}

The map $\Id\times Q :\wt P\times H\to \wt P\times[-1,1]$ is a folded
map which has $P\times 0\subset P\times H$ as its fold of index $k $
with respect to the coorientation of the fold defined by the second
coordinate of the product $P\times [-1,1]$.
\begin{figure}[hi]
  % \centerline{\psfig{figure=,height=40mm}}
  \centerline{\includegraphics[height=40mm]{}}
  \caption{$\wt P^\varphi=\{Q(x)=x_1^2+x_2^2=\varphi(p)\}$}
  \label{mw16}
\end{figure} 
Given a smooth function $\varphi:\wt P\to[-1,1]$ we define
\begin{equation}
\label{eq:level}
\begin{split}
  \wt P ^\varphi&=\{(p,x)\in \wt P\times H \,|\, Q
  (x)=\varphi(p)\}\,,\cr Z^\varphi &=\{(p,x)\in \p \wt P\times H \,|\,
  Q (x)=\varphi(p)\}\,,\cr R^{\varphi}
  &=P^{\varphi}\cap\{||x_+||=2\}.\cr
 \end{split}
\end{equation}
We then have $\p\wt P^{\varphi} =Z ^{\varphi}\cup R^{\varphi}$.

Given a one-parameter family of functions $\varphi_t:\wt P\to[-1,1]$,
$t\in[0,1]$, we denote
\begin{equation}\label{eq:interpolation}
  \begin{split}
    \wt P^{\varphi_t}&=\{(p,x,t)\in \wt P\times H \times[0,1]\,|\,   Q (x
    )=\varphi_t(p)\}\,,\cr
    Z^{\varphi_t} &=\{(p,x,t)\in \p \wt P\times H \times[0,1]\,|\,  Q
    (x )=\varphi_t(p)\}\,,\cr 
    R^{\varphi_t} &=P^{\varphi_t}\cap \{||x_+||=2\}.\cr
    %%%%% 
\end{split}
\end{equation}
We have $\p\wt P^{\varphi_t}=Z^{\varphi_t}\cup R^{\varphi_t}\cap \wt
P^{\varphi_0}\cup\wt P^{\varphi_1}$.  We will consider the projection
\begin{align*}
  \pi: \tilde P \times H \times [0,1] \to \tilde P \times [0,1]
\end{align*}
and especially its restriction to the
subsets~\eqref{eq:interpolation}.  Using~\eqref{eq:22}, the set
$R^{\varphi_t}$ can be identified with $(\tilde P \times [0,1]) \times
S^{k-1} \times S^{d-k-1}$ via a diffeomorphism over $\tilde P \times
[0,1]$.  In particular we get a diffeomorphism
\begin{align}\label{eq:23}
  R^{\varphi_0} \times [0,1] \to R^{\varphi_t},
\end{align}
which scales the $x_-$ coordinates.  In fact it can be seen to be
given by the formula
\begin{align*}
  (p,(x_-, x_+), t) \mapsto
  (p,(\sqrt{\frac{4-\varphi_t(p)}{4-\varphi_0(p)}} x_-, x_+),t),
\end{align*}
although we shall not need this explicit formula.

The restriction $\pi: \tilde P ^{\varphi_t} \to \tilde P \times [0,1]$
is our ``template cobordism'', and we record its properties in a lemma.
\begin{lemma}\label{lm:model1}
  \begin{description}
  \item{a)} Suppose that $0$ is not a critical value of
    $\varphi$. Then $\wt P^\varphi$ is a smooth manifold of dimension
    $n+d$, and the projection $\pi|_{\wt P^\varphi}:\wt P^\varphi\to
    \wt P $ is a folded map with the fold $C = \tilde P ^\varphi \cap
    (\tilde P \times \{0\})$, which projects to
    $\oC=\pi(C)=\varphi^{-1}(0) \subset \tilde P$.  In particular,
    the map $\pi|_{\wt P^\varphi}$ is non-singular if $\varphi$ does
    not take the value $0$.  The fold $C$ has index $k$ with respect
    to the coorientation of $\oC$ by an outward normal vector field to
    the domain $\{\phi\leq0\}$.
  \item{b)} Let $\varphi_t:\wt P\to[-1,1]$, $t\in[0,1]$, be a
    one-parameter family of functions such that $0$ is not a critical
    value of $\varphi_0$ and $\varphi_1$ and of the function $ \wt
    P\times[0,1]\to[-1,1] $ defined by $(p,t)\mapsto \varphi_t(p)$,
    $p\in\wt P, t\in[0,1]$.  We also assume $\phi_t(p)$ is independent
    of $t$ for $p$ near $\partial P$.  Then $\pi|_{\tilde
      P^{\varphi_t}}: \tilde P^{\varphi_t} \to \tilde P \times [0,1]$
    % the manifold $\wt P^{\phi_t}$ together
    % with the restriction to it of the projection
    % $(p,x,t)\mapsto(p,t)$
    % to $\wt P\times[0,1]$
    is a folded cobordism between the folded maps $\wt
    P^{\phi_0}\to\wt P$ and $\wt P^{\phi_1} \to \wt P$.  We have
    $Z^{\phi_t} = Z^{\phi_0} \times [0,1]$, so together
    with~\eqref{eq:23} we get a diffeomorphism
    \begin{align}\label{eq:24}
      (Z^{\phi_0} \cup R^{\phi_0}) \times [0,1] \to Z^{\phi_t} \cup
      R^{\phi_t}.
    \end{align}
    % The
    % cobordism is trivial over $Z^{\varphi_0}\cup R^{\varphi_0}$,
    % i.e. there exists a diffeomorphism $g:Z^{\varphi_t}\cup
    % R^{\varphi_t}\to Z^{\varphi_0}\cup R^{\varphi_0}\times[0,1]$
    % which
    % commutes with the projection $(p,x,t)\mapsto(p,t)$, $p\in\wt P,
    % x\in
    % H, t\in[0,1]$.
  \end{description}
\end{lemma}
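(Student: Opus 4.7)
The plan is to prove (a) by a direct application of the regular value theorem followed by a local-coordinate identification with the standard fold model~\eqref{eq:fold}, and then to deduce (b) by applying (a) with base $\tilde P \times [0,1]$ together with an explicit construction of the boundary diffeomorphism.

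For (a), consider the smooth function $F: \tilde P \times H \to \R$ defined by $F(p,x) = Q(x) - \varphi(p)$, so that $\tilde P^\varphi = F^{-1}(0)$. A critical point of $F$ must satisfy $dQ(x) = 0$ and $d\varphi(p) = 0$, i.e.\ $x = 0$ and $p$ critical for $\varphi$. If such a point lay in $F^{-1}(0)$, then $\varphi(p) = Q(0) = 0$ would exhibit $0$ as a critical value of $\varphi$, contradicting the hypothesis. Hence $0$ is a regular value of $F$ and $\tilde P^\varphi$ is a smooth manifold of dimension $n+d$. To read off the behavior of $\pi|_{\tilde P^\varphi}$, choose local coordinates $(y_1,\dots,y_n)$ on $\tilde P$ near any point of $\varphi^{-1}(0)$ adapted so that $\varphi = y_n$, possible since $0$ is a regular value. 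Then $(y_1,\dots,y_{n-1},x_1,\dots,x_{d+1})$ is a local coordinate system on $\tilde P^\varphi$, the constraint being $y_n = Q(x)$, and in these coordinates
\begin{align*}
  \pi(y_1,\dots,y_{n-1},x) = (y_1,\dots,y_{n-1},\, Q(x)),
\end{align*}
which is precisely the fold normal form~\eqref{eq:fold} of index $k$. Away from $\{x=0\}$ the implicit function theorem solves $y_n = Q(x)$ for some $x_i$, so $\pi$ is a local diffeomorphism; in particular if $\varphi$ never vanishes then $\pi$ is a submersion. The fold locus is $C = \varphi^{-1}(0) \times \{0\}$, with image $\overline C = \varphi^{-1}(0)$; the vector field $\partial/\partial y_n$ is the outward normal to $\{\varphi \leq 0\}$, and with this coorientation the number of negative squares in $Q$ gives index $k$.

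Part (b) follows from (a) applied with base $\tilde P \times [0,1]$ and function $\Phi(p,t) = \varphi_t(p)$. The hypothesis that $0$ is a regular value of $\Phi$ and of $\varphi_0,\varphi_1$ guarantees that $\tilde P^{\varphi_t}$ is a smooth $(n+d+1)$-manifold carrying a folded projection onto $\tilde P \times [0,1]$ whose boundary restrictions to $t=0$ and $t=1$ are the folded maps $\tilde P^{\varphi_0} \to \tilde P$ and $\tilde P^{\varphi_1} \to \tilde P$. The identification $Z^{\varphi_t} = Z^{\varphi_0} \times [0,1]$ is immediate from the assumption that $\varphi_t(p)$ is independent of $t$ for $p$ near $\partial P$, and the scaling~\eqref{eq:23} of the $x_-$ coordinate supplies a diffeomorphism $R^{\varphi_0} \times [0,1] \to R^{\varphi_t}$ over $\tilde P \times [0,1]$. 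Glueing these two identifications along their common corner $\partial P \times \{ \|x_+\|=2\} \times [0,1]$ yields the diffeomorphism~\eqref{eq:24}.

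The argument is essentially a routine local-coordinate computation; the only slightly delicate point is matching coorientations with the claimed fold index, which I would handle by carefully tracking how the orientation of $\partial/\partial y_n$ compares with the outward normal to $\{\varphi \leq 0\}$ and pairing this with the signs of $x_-$ versus $x_+$ in $Q$.
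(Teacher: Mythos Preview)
The paper does not prove this lemma; it is stated as a record of elementary properties of the template construction (``we record its properties in a lemma'') and immediately applied. Your proof fills this in correctly, via exactly the regular-value and local-coordinate computation one would expect.

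One slip to fix: where you write ``so $\pi$ is a local diffeomorphism'', you mean ``so $\pi$ is a submersion''. The map $\pi|_{\tilde P^\varphi}$ goes from a manifold of dimension $n+d$ to one of dimension $n$, so it cannot be a local diffeomorphism when $d>0$. The argument you have in mind is that away from $\{x=0\}$ some $\partial Q/\partial x_i \neq 0$, so the implicit function theorem makes $(y_1,\dots,y_n,x_1,\dots,\hat x_i,\dots,x_{d+1})$ local coordinates on $\tilde P^\varphi$ in which $\pi$ is projection onto the first $n$ factors---hence a submersion. Note also that this argument works globally (not only near $\varphi^{-1}(0)$): at any point of $\tilde P^\varphi$ over $p$ with $\varphi(p)\neq 0$ one has $Q(x)=\varphi(p)\neq 0$, hence $x\neq 0$ automatically, and the same implicit-function reasoning applies.
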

%%%

We will need to apply the above lemma to two particular functions on
$\wt P$.  Recall that $U = \partial P \times [-1,1] \subset \tilde
P$ denotes the bicollar.  Take $\varphi_0\equiv 1$ and pick a function
$\varphi_1$ with the following properties:
\begin{itemize}
\item $\phi_1 = 1$ near $\partial \tilde P$,
\item $\phi_1 = -1$ on $\tilde P \setminus U$,
\item For $(p,v) \in U$, $\phi_1(p,v)$ is a non-decreasing function of
  $v$,
\item $\phi_1(p,v) = v$ for $|v| <.5$.
\end{itemize}
% \begin{equation}\label{eq:special-function}
%   \varphi_1 =\begin{cases}  1& \hbox{near}\;\; \p\wt P;\cr
%     -1&\hbox{on} \;\; \wt P\setminus U,\cr
% \end{cases}
% \end{equation}
% and $\varphi_1|_{U=\p P\times[-1,1]}$ is a non-decreasing function of
% the coordinate $ v\in[-1,1]$ which is equal to $v$ on $[-\frac12,
% \frac12]$.  \medskip
 
We will write $\wt P^0$ and $\wt P^1$ for $\wt P^{\varphi_0}$ and $\wt
P^{\varphi_1}$, and denote by $\pi^0$ and $\pi^1$ the respective
projections $\wt P^0\to\wt P$ and $\wt P^1\to\wt P$. Similarly, we
will use the notation $Z^0, Z^1, R^0$ and $R^1$ instead of
$Z^{\varphi_0}, Z^{\varphi_1},R^{\varphi_0}$ and $R^{\varphi_1}$.  The
map $\wt P^0\to\wt P$ is non-singular, while the map $\wt P^1\to\wt P$
has a fold singularity with image $\p P\subset\wt P$. The index of
this fold with respect of the outward coorientation to the boundary of
$P$ is equal to $k$.
% Let us also denote $U^1:=(\pi^1)^{-1}(U)\subset \wt P^1$.

Taking linear interpolations between $\varphi_0$ and $\varphi_1$ in
one order or the other, we get folded cobordisms in two directions
between the map $\wt P^0\to\wt P$ and $\wt P^1\to \wt P$. We will
denote the corresponding cobordisms by $\wt P^{01}$ and $\wt P^{10}$,
respectively.  The projections $\pi^{01}:\wt P^{01}\to\wt
P\times[0,1]$ and $\pi^{10}:\wt P^{10}\to\wt P\times[0,1]$ are folded
bordisms in two directions between $\pi^0:\wt P^0\to\wt P$ and
$\pi^1:\wt P^1\to \wt P $.  We think of $\tilde P^{\phi_t}$ as a
one-parameter family of (possibly singular) manifolds, interpolating
between $\tilde P^0$ and $\tilde P^1$.  Using the
trivialization~\eqref{eq:24}, these manifolds all have the same
boundary, so $\pi^{01}$ and $\pi^{10}$ may be used as local models for
cobordisms.  They allow us to create, or annihilate a fold component,
respectively.  We describe the fold eliminating surgery more formally
in the next section and leave the formal description of the inverse
process of fold creating surgery to the reader. In fact fold creating
surgery will not be needed for the proof of the main theorem.
\medskip

In the context of enriched folded maps there are two versions of fold  eliminating surgery.
One will be referred to as  {\it membrane eliminating}. In this case the membrane will be
eliminated together with the fold.  The second one will be referred to
as   {\it membrane expanding.} In that case the
membrane after the surgery will be spread over $\oP$, the image of $P$
in the target.
  
For the  membrane eliminating case we choose  
 the  submanifold $$V_-=\{(x_2,\dots, x_{d+1})=0, x_1\leq 0\}\cap\wt P^{10}\subset \wt P\times H\times [0,1]$$ as a template membrane for the folded bordism
  $\pi^{10}:\wt P^{10}\to\oP$.  Next we choose the subbundles  $K_-$ and $K_+$ 
   spanned by vector fields
$\frac{\partial}{\partial x_2}, \dots, \frac{\partial}{\partial x_{k}}$ and $\frac{\partial}{\partial x_{k+1}}, \dots, \frac{\partial}{\partial x_{d+1}}$, respectively, as  a template framing. Note that with this choice we have  $\p V_-=\p_-(V_-,K)$ and the index of the membrane $V_-$ is equal to $k-1$.

For the membrane expanding surgery we choose  as a template membrane the submanifold 
$$V_+=\{(x_1,\dots, x_k,x_{k+2},\dots, x_{d+1})=0, x_k\geq 0\}\cap\wt P^{10}\subset \wt P\times H\times [0,1]$$ with boundary $\Sigma(\pi^{10})$ as the membrane for  the folded bordism   $\pi^{10}:\wt P^{10}\to\wt P\times[0,1]$.  
We choose the   subbundles  $K_-$ and $K_+$ 
   spanned by vector fields
$\frac{\partial}{\partial x_1}, \dots, \frac{\partial}{\partial x_{k}}$ and $\frac{\partial}{\partial x_{k+2}}, \dots, \frac{\partial}{\partial x_{d+1}}$, respectively,  as  a template framing.
Note that with this choice we have  $\p V_+=\p_+(V_+,K)$
and the index of the membrane $V_+$ is equal to $k$.

 Note that in the membrane eliminating  case the restriction of the membrane $V_-$ to $\wt P^1$  projects diffeomorphically onto $P\subset \wt P$, while in the membrane expanding case  the restriction of the membrane 
   $V_+$ to $\wt P^1$  projects diffeomorphically onto $\wt P\setminus\Int P\subset \wt P$.

    %%%%%%%%%%%%%%%%%%%%%%%%%%%%%%%%%
\subsubsection{Surgery}\label{sec:surgery-construction}
{\sl Membrane eliminating surgery}. Let $(f:M\to X,\fe)$ be an enriched
folded map and $(V,K)$ one of its membranes.  Suppose that the framed  membrane $(V,K)$ is pure and assume first that
$\p_+(V,K)=\varnothing$, and that the index of the membrane is equal to $k-1\geq 0$. Note that in this case the boundary fold $\p V$ has index $k$ with respect to the outward coorientation of $\p \oV$.
Consider the model enriched folded map $\pi^1:\wt
P^1\to \wt P$ where $P$ is diffeomorphic to $V$. Fix a diffeomorphism
$\psi:P\to \oV=f(V)\subset X$.  Let $U^1$ denote a neighborhood of
$\partial P \subset \tilde P^1$.  According to
Lemma~\ref{lemma-replacing-eq:14} there exist an extension
$\wt\psi:\wt P\to X$ of the embedding $\psi$ and an embedding
$\Psi:U^1\to M$ such that
\begin{itemize}
\item  the diagram
  \begin{equation}
    \xymatrix{U^1\ar[r]^{\Psi}\ar[d]^{\pi^1}&M\ar[d]^f\\
      U \ar[r]^{\wt\psi} &X}
  \end{equation}
  commutes;
\item $\Psi^{-1}(V) = V_- \cap U^1$;
\item the canonical framing of the membrane $V_-\cap U^1$ is sent by
  $\Psi$ to the given framing of the membrane $V$.
\end{itemize}
 
The data needed for eliminating the fold $\partial V$ by surgery
consists of an extension of $\Psi$ to all of $\tilde P^1$ such that
\begin{itemize}
\item  the diagram
  \begin{equation}\label{eq:diagram-neg}
    \xymatrix{\wt P^{1}\ar[r]^{\Psi}\ar[d]^{\pi^1}&M\ar[d]^f\\
      \wt P \ar[r]^{\wt\psi} &X}
  \end{equation}
  commutes;
\item $\Psi^{-1}(V) = V^1_- := V_- \cap \tilde P^1$;
\item the canonical framing of the membrane $V_-^1$ is sent by $\Psi$
  to the given framing of the membrane $V$.
\end{itemize}
 
\begin{construction}\label{constr:membrane-elim}
  Fold eliminating surgery consists of replacing
  $\wt P^1$ by $\wt P_0$ with the projection $\pi^0$.  More precisely,
  cut out $\Psi(\tilde P^1) \times [0,1]$ from $M \times [0,1]$ and
  glue in $\tilde P^{10}$ along the identification~\eqref{eq:24}.
  This gives an enriched folded map $W \to X \times [0,1]$ which is a
  cobordism starting at $f$, and ending in an enriched folded map
  where the fold $\partial V$, {\it together with its membrane } $V$, has been removed.
% The cobordism
%   $\wt P^{10}$ is trivial over $\Op(Z^1\cup R^1)$ and hence, one gets
%   an enriched folded bordism between the enriched folded map $\ff$ and
%   the result of the surgery $\wh \ff$ by cutting out $\Psi(\wt
%   P^1)\times [0,1]$ from $M\times[0,1]$ and gluing $\wt P^{10}$ back
%   in.
\end{construction}

\begin{figure}[hi]
  % \centerline{\psfig{figure=mw7.eps,height=40mm}}
  \centerline{\includegraphics[height=40mm]{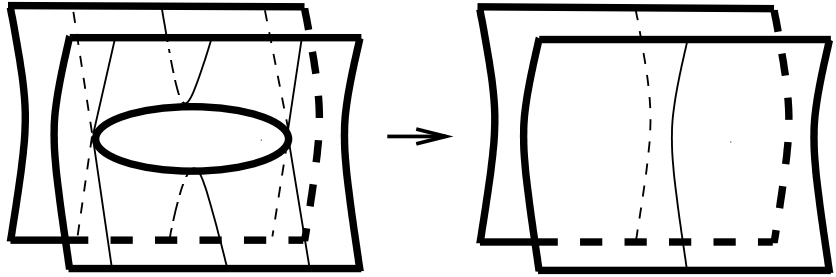}}
  \caption{Fold eliminating surgery $(n=2,d=0)$}
  \label{mw7}
\end{figure}

\begin{figure}[hi]
  % \centerline{\psfig{figure=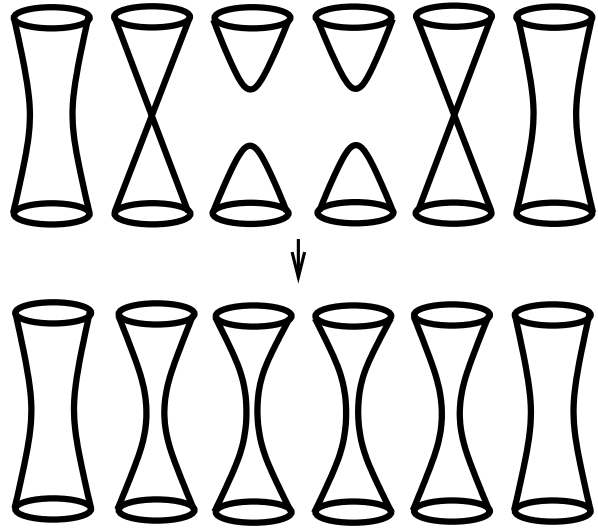,height=70mm}}
  \centerline{\includegraphics[height=70mm]{mw8.eps}}
  \caption{Fold eliminating surgery $(n=1,d=2)$}
  \label{mw8}
\end{figure}

 The case   $\p_-(V,K)=\varnothing$ can be reduced to the previous one by the following procedure.
 Let $\oK$ be the dual framing of $V$, see Section \ref{sec:enriched} above. Then
 $\p V=\p_-(V,\oK)$, $\p_+(V,\oK)=\varnothing$, $d>k$,  and the membrane $(V,\oK)$ has index $d-k-1$.
 Hence, we can use for the membrane eliminating surgery the  above template of index $d-k$, and then switch the framing of the constructed membrane in the cobordism to the dual one.
 
\medskip {\it Membrane expanding case.}  Let $ P$ be a domain in $X$
with   boundary bounded by  image $\oC$  of   a fold $C$ of index $k$ with respect to the outward orientation
of $\oC=\p P$.  Suppose that the membrane  $V$  adjacent to $C$ projects to the complement of $P$ in $X$,  
 i.e. $\oV=f(V)\subset X\setminus\Int P$, and $C\subset\p_+(V,K)$.
  The case $C\subset \p_-(V,K)$ can be reduced to the positive by passing to the dual framing as it was explained above  in the membrane eliminating case.
 
\medskip
Consider the template enriched folded map $\pi^1:\wt P^1\to \wt P$ as
in Section~\ref{sec:surgery-template}.  Let $\psi$ denote the
inclusion $ P\hookrightarrow X$.  According to
Lemma~\ref{lemma-replacing-eq:14}, there exist an extension
$\wt\psi:\wt P\to X$ of the embedding $\psi$ and an embedding
$\Psi:U^1\to M$ such that
\begin{itemize}
\item  the diagram
  \begin{equation}
    \xymatrix{U^1\ar[r]^{\Psi}\ar[d]^{\pi^1}&M\ar[d]^f\\
      U \ar[r]^{\wt\psi} &X}
  \end{equation}
  commutes;
\item $\Psi^{-1}(V) = V_+\cap U^1$;
\item the canonical framing of the membrane $V_+\cap U^1$ is sent by
  $\Psi$ to the given framing of the membrane $V$.
\end{itemize}

In this case the data needed for eliminating the fold $\partial V$ by
surgery consists of an extension of $\Psi$ to all of $\tilde P^1$ such
that
\begin{itemize}
\item  the diagram
  \begin{equation}\label{eq:diagram-pos}
    \xymatrix{\wt P^{1}\ar[r]^{\Psi}\ar[d]^{\pi^1}&M\ar[d]^f\\
      \wt P\ar[r]^{\wt\psi} &X}
  \end{equation}
  commutes;
\item the canonical framing of the membrane $V_+\cap\wt P^1$ is sent
  by $\Psi$ to the given framing of the membrane $V$.
\end{itemize}

\begin{construction}\label{constr:membrane-exp}
  Fold eliminating surgery consists of replacing $\wt P^1$ by $\wt
  P^0$ with the projection $\pi_0$.  Exactly as in
  Construction~\ref{constr:membrane-elim} we get an enriched folded
  map $W \to X \times [0,1]$ which is a cobordism starting at $f$, and
  ending in an enriched folded map where the fold $\partial V$  has
  been removed.
\end{construction}
 
Both constructions eliminate the fold $\p V$.  The difference between
them is that in the membrane expanding case, the above surgery spreads the
membrane $V$ over the domain $P$.  

\subsubsection{Bases for fold surgeries}\label{sec:basis-surgery}

The embedding $\Psi:\wt P^1 \to M$ required for the surgeries in
constructions~\ref{constr:membrane-elim} and~\ref{constr:membrane-exp}
is determined up to isotopy by slightly simpler data.  To any smooth
manifold $P$ with collared boundary, let $\varphi_1: P \to [-1,1]$ be
the function defined in Section~\ref{sec:surgery-template} and
let
\begin{align*}
  S^{k-1}P = \{(p,x_-) \in \tilde P \times D^k |\; \varphi_1(p) = -
  \|x_-\|^2\}.
\end{align*}
This is a closed manifold, which up to diffeomorphism depends only on
$P$.  In fact, it is diffeomorphic to the boundary of $P \times D^k$
(after smoothing the corners of $P \times D^k$).  The projection
$(p,x_-) \mapsto p$ restricts to a folded map $\pi:S^{k-1}P \to P$
with fold $\partial P$ of index $k$ with respect to the outward
orientation of the boundary of $P$.  We have an embedding
\begin{align*}
  S^{k-1} P \to \wt P^1 \subset \tilde P \times H
\end{align*}
given by $(p,x_-) \mapsto (p,x_-,0)$.  The normal bundle of this
embedding has a canonical frame given by projections of the frame 
 $$\frac{\p}{\p x_+}= \left(
 \frac{\partial}{\partial x_{k+1}},
\dots, \frac{\partial}{\partial x_{d+1}} \right)$$ to $TS^{k-1}P$.

We also have an embedding
$\partial P \to S^{k-1}P$ as $p \mapsto (p,0)$, which identifies
$\partial P$ with the folds of the projection $S^{k-1}P \to P$, and
the normal bundle of $\partial P \subset S^{k-1}P$ is framed by

 $$\frac{\p}{\p x_-}= \left(
  \frac{\partial}{\partial x_{1}},
\dots, \frac{\partial}{\partial x_{k}}\right). $$
 
Let us denote $P_-=V_-\cap S^{k-1}P$. Thus we
have $$P_-=S^{k-1}P\cap\{(x_2,\dots, x_k)=0, x_1\leq 0\}=\{(x_2,\dots,
x_k)=0, x_1=-\sqrt{-\varphi_1(p)}\}.$$
 %%%%%%%%%%
%%%% CONTINUE
 
\begin{definition}
  Let $(f: M^{n+d} \to X^n, \fe)$ be an enriched folded map.
 
  a) {\bf Membrane eliminating case.}  Let $(V,K)\subset M$ be a framed  membrane
  with $\p_+(V,K)=\varnothing$. A \emph{basis} for membrane--eliminating
  surgery consists of a pair $(h: S^{k-1}P \to M,\mu)$, where $P$ is a
  compact $n$-manifold with boundary, $h: S^{k-1}P \to M$ is an
  embedding, and $\mu = (\mu_{k+1}, \dots, \mu_{d+1})$ is a framing of
  the normal bundle of $h$, such that the following conditions are
  satisfied.
  \begin{itemize}
  \item $h(P_-)= V$;
  \item the map $f\circ h$ factors through an embedding $g:P\to X$,
    i.e.\ $f\circ h=g\circ\pi$, and hence $f(h(S^{k-1}P))=\overline
    V=f(V)$;
%%% Is a consequence I think: (Soren)
%   \item $dh(\frac{\p}{\p x_+}\big|_{\p P})\subset\Ker_-df$ and
%     coincides with the given framing of the bundle $\Ker_-$ over the
%     fold $C=h(\p P)$;
  \item the vectors $\mu_{k+1},\dots, \mu_{d+1}$ belong to $\Ker
    df|_{h(S^{k-1}P)}$ and along $h( P_-)$ coincide with the given
    framing of the bundle $\wt\Ker_+df$.
  \item the vectors $dh(\frac\p{\p x_2}),\dots,dh(\frac\p{\p x_k})$
    defines the prescribed framing of the bundle $\wt\Ker_-|_V$;
  \item $h(S^{k-1}P)$ is disjoint from membranes of $\e$, other than $V$.
  \end{itemize}
  \begin{figure}[hi]
  % \centerline{\psfig{figure=mw8.eps,height=70mm}}
  \centerline{\includegraphics[height=30mm]{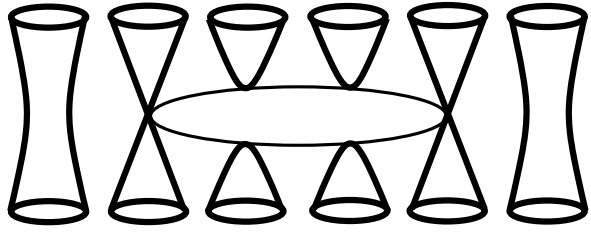}}
  \caption{The oval is the image $h(S^{k-1}P)$, where $k=1$, $P=I$}
  \label{mw14}
\end{figure}
  \smallskip b) {\bf Membrane expanding case.}  Let $ P$ be a domain
  in $X$ bounded by folds of index $k$ with respect to the outward
  orientation of $\oC=\p P$.  Let $C$ be the union of the
  corresponding fold components, and $(V,K)$ the union of framed membranes
  adjacent to $C$.  Suppose that $C\subset\p_+(V,K)$ and $f(V)\subset
  X\setminus\Int P$.  A \emph{basis} for membrane--expanding surgery
  consists of a pair $(h: S^{k-1}P \to M,\mu)$, where $h: S^{k-1}P \to
  M$ is an embedding, and $\mu = (\mu_{k+1}, \dots, \mu_{d+1})$ is a
  framing of the normal bundle of $h$, such that the following
  conditions are satisfied.
  \begin{itemize}
  \item the map $f\circ h$ factors through an embedding
    $g:P\hookrightarrow X$, i.e.\ $f\circ h=g\circ\pi$;
  \item $dh(\frac{\p}{\p x_+}\big|_{\p P})\subset\Ker_-df$ and
    coincides with the given framing of the bundle $\Ker_-$ over the
    fold $C=h(\p P)$;
  \item $h(S^{k-1}P)\cap V=C$ and $h(S^{k-1}P)$ is disjoint from any
    other membranes different from $V$.
  \item the vector field $$dh\left((-1)^k\frac\p {\p
        x_{k+1}}\right)\Big|_{C}$$ is tangent to $V$ and inward
    transversal to $\p V$.
\end{itemize}

\end{definition}
Note that in both cases it follows from the above definitions that
$f\circ h:S^{k-1}P\to X$ is a folded map with the definite fold
$\Sigma(f\circ h)=h(\p P)$.  \medskip

Given a basis $(h,\mu)$, one can extend, uniquely up to isotopy the
embedding $h$ to an embedding $\Psi:\wt P^1\to M$ such that the
diagram \eqref{eq:diagram-neg} or \eqref{eq:diagram-pos}
commutes. This, in turn, enables us to perform a membrane eliminating
or membrane expanding surgery.

\begin{remark}\label{rem: fold-creating}{\it Fold creating surgeries.} {\rm
Fold creating surgeries are inverse to  fold eliminating surgeries. For our purposes we will need only one such surgery which creates a fold of index 1 with respect to the {\it inward} co-orientation of its membrane.
A basis of such a surgery  is given by a pair $(h,\mu)$, where $h$ is an embedding $h:P\times\{ -1,1\}\to M$ over a domain $P\subset X$ disjoint from the folds of the map $f$, and $\mu$ is a framing of the vertical bundle $\Ker\, df|_{h(P\times\{-1, 1\})}$. See Figure \ref{mw17}.}
\end{remark}
  %%%%%%
 \begin{figure}[hi]
  % \centerline{\psfig{figure=mw8.eps,height=70mm}}
  \centerline{\includegraphics[height=70mm]{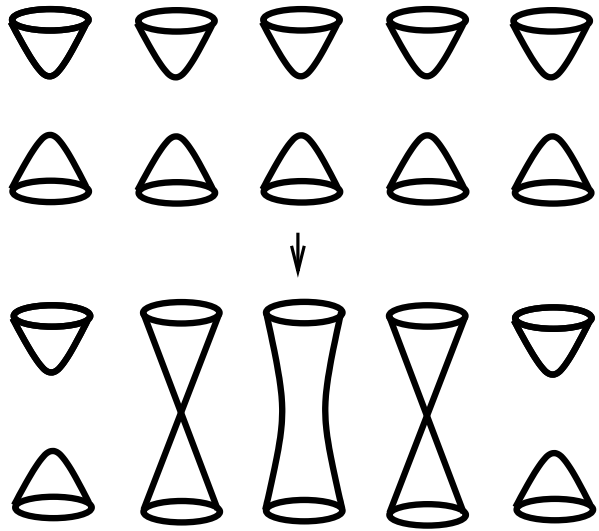}}
  \caption{Fold creating surgery}
  \label{mw17}
\end{figure}
 %%%%%%
 
\subsubsection{The case $d=2$}\label{sec:surgery-d=2}
Let us review fold eliminating surgeries in the case $d=2$.  These
surgeries can be of index $0$, $1$, $2$ or $3$.  Let $C$ be a union of
fold components whose projections bound a domain $P\subset X$. In the
membrane eliminating case, $P$ is the projection $\oV=f(V)$ of the
membrane which spans $C$. In the membrane expanding case the
membranes adjacent to $C$ projects to the complement of the domain
$P$.  All fold indices below are with respect to the outward
coorientation of the boundary of the domain $ P$.
\begin{description}
\item {\it Index 0.}  We have $S^{-1}P=\p P$, i.e.\ the basis of the
  surgery in this case is a framed embedding $h:\p P\to M$ which sends
  $\p P$ to the fold $C$. Only the  membrane expanding
  surgery is possible in this case.  When a point $p\in X$ approaches
  $\p P$ from outside, a spherical components of the fiber $f^{-1}(p)$
  dies. The surgery prevents it from dying end prolongs its existence
  over all points of $P$.
\item {\it Index 1. }  The surgery basis in this case consists of 2 sections
  $s_\pm: P\to M$, together with framings of the bundle $\Ker\, df$ over
  them. As $p\to \overline{z}\in C$ the sections $s_\pm(p)$ converge
  to the same point $z\in C$, $f(z)=\overline z$. In the membrane
  eliminating case, one of these sections is the membrane $V$.
 
  The manifold $M'$ is obtained by a fiberwise index 1 surgery (i.e.\
  the connected sum) along the framed points $s_+(p)$ and $s_-(p)$, $p\in
  P$. This eliminates the fold $C$ together with the membrane $V$ in
  the membrane eliminating case, and spreads the membrane over $P$ in
  the membrane expanding case. In the latter case  the newly created membrane is a section
  over $P$ which takes values in the circle bundle over $P$ formed
 by central circles of added cylinders $S^1\times[-1,1]$.

\item {\it Index 2.} The surgery basis in this case is a
  circle-subbundle over the domain $ P\subset X$, i.e.\ a family of
  circles in fibers $f^{-1}(p),$ $p\in P$, which collapse to points in
  $C$ when $p$ converges to a boundary point of $P$. In the fold
  eliminating case, the membrane $V$ is a section over $P$ of this
  circle bundle.
 
  The surgery consists of fiberwise index 2 surgery of fibers along
  these circles, which eliminates the fold $C$ together with its
  membrane in the eliminating case, and spreads it over $P$ in the
  expanding one.
\item {\it Index 3.}  The basis of the surgery in this case is a
  connected component of $M$ which forms an $S^2$-bundle over $P$. The
  2-spheres collapse to points of $C$ when approaching the boundary of
  $P$.  The surgery eliminates this whole connected component, in
  particular removing the fold and its membrane. The membrane
  expanding case is not possible for $k=3$.
 
\end{description}
\medskip

 %%%%%%%%%%%%%%%%%%%%%%%%%%%%
\subsection{Destabilization}
\label{sec:destab}

So far (in Theorem~\ref{thm:special-folded} and Lemma
\ref{lm:special-enrichment}) we have related \emph{unstable} formal
fibrations to (enriched) folded maps.  In Theorem~\ref{thm:main1} we
need to work with \emph{stable} formal fibrations (because that is
what $\Omega^\infty\C P^\infty_{-1}$ classifies).  In this section we
study the question of whether an epimorphism $\Phi: TM \oplus
\epsilon^1 \to TX \oplus \epsilon^1$ can be ``destabilized'', i.e.\
homotoped to be of the form $\overline{\Phi} \oplus \mathrm{Id}$ for
some unstable epimorphism $\overline{\Phi}: TM \to TX$.  This is not
possible in general of course (the obstruction is an Euler class).
Instead we prove the following.
\begin{proposition}\label{prop:destabilization}
  Let $\Phi: TM \oplus \epsilon^1 \to TX \oplus \epsilon^1$ be a
  bundle epimorphism with underlying map $f: M \to X$.  Assume $M$ and
  $X$ are connected.  Then there is a compact codimension 0
  submanifold $M_0 \subset M$ which is homotopy equivalent to a
  simplicial complex of dimension at most 1, such that the following
  hold, after changing $f$ and $\Phi$ by a homotopy (in the class of
  bundle epimorphisms).
  \begin{enumerate}[(i)]
  \item $f|_{M_0}$ is folded and has an enrichment $\fe$ such that
    \begin{itemize}
    \item $\Phi|_{M_0} = \cL(f_{|M_0},\fe)$ if $n:=\dim\, X> 1$;  
    \item   $\Phi_{|M_0} = \cL(f|_{M_0},\fe)$ or  $\Phi|_{M_0} = \cL_-(f_{|M_0},\fe)$  in the case $n=1$;
    \end{itemize}
  \item $\Phi$ is integrable near $\partial M_0$, i.e.\ it equals $Df
    \oplus \epsilon^1$ there.
  \item $\Phi$ destabilizes outside $M_0$, i.e.\ it equals
    $\overline{\Phi} \oplus \epsilon^1$ there, for some bundle
    epimorphism $\overline{\Phi}: TM|_{M \setminus M_0} \to TX$.
  \end{enumerate}
\end{proposition}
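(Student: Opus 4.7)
My approach uses obstruction theory for the destabilization condition together with Theorem~\ref{thm:special-folded} to build the enriched folded structure on a small neighborhood. At $p \in M$, the surjection $\Phi_p\colon T_pM \oplus \epsilon \to T_{f(p)}X \oplus \epsilon$ destabilizes exactly when $\ker \Phi_p \subset T_pM$. Fiberwise, the destabilized surjections $\mathbb{R}^{m+1} \to \mathbb{R}^{n+1}$ form (up to homotopy) the sub-Stiefel $V_n(\mathbb{R}^m) \subset V_{n+1}(\mathbb{R}^{m+1})$, namely the fiber over the basepoint of the ``last-vector-of-frame'' fibration $V_{n+1}(\mathbb{R}^{m+1}) \to S^m$. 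Thus the obstruction to homotoping $\Phi$ into a destabilized form factors through a section $\varphi$ of the unit sphere bundle of $TM \oplus \epsilon$, measured against the canonical section $s_0 = (0,1)$; since $\pi_k(S^m) = 0$ for $k < m$ this obstruction is concentrated in top degree, and by transversality of $\varphi$ with the antipodal section $-s_0$ we may assume $\varphi^{-1}(-s_0)$ is a finite set $\{p_1, \dots, p_r\} \subset M$.

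Let $K$ be either this finite set (a $0$-complex) or any $1$-complex containing it, and take $M_0$ to be a small regular neighborhood of $K$; this is a compact codimension $0$ submanifold homotopy equivalent to $K$. On $M \setminus M_0$ the section $\varphi$ avoids $-s_0$, hence takes values in a contractible open subset of the sphere bundle and may be deformed to $s_0$; lifting through the Stiefel fibration produces a homotopy of $\Phi$ which makes it destabilized on $M \setminus M_0$, i.e.\ $\Phi = \overline{\Phi} \oplus \epsilon^1$ for some unstable surjection $\overline{\Phi}\colon TM|_{M \setminus M_0} \to f^*TX$. On an open collar of $\partial M_0$, invoke the Gromov--Phillips $h$-principle for submersions (applicable since $d > 0$) to homotope $f$ rel.\ the exterior so that $f$ becomes a submersion on the collar with $Df = \overline{\Phi}$, yielding $\Phi = Df \oplus \epsilon^1$ near $\partial M_0$ and verifying conditions~(ii) and~(iii).

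For the enriched folded structure on $M_0$, note that $M_0 \simeq K$ has homotopy dimension at most $1$ and $m \geq 2$, so $H^m(M_0; \mathbb{Z}) = 0$ and the destabilization obstruction on $M_0$ itself vanishes rel.\ $\partial M_0$. Hence we may further homotope $\Phi|_{M_0}$ rel.\ a neighborhood of $\partial M_0$ to an unstable form $\overline{\Phi}_0 \oplus \epsilon^1$ with $\overline{\Phi}_0 = Df$ near $\partial M_0$. Apply Theorem~\ref{thm:special-folded} to $\overline{\Phi}_0$, taking as its integrability locus the collar of $\partial M_0$ where $f$ is already a submersion: this yields a special folded map $g\colon M_0 \to X$ agreeing with $f$ near $\partial M_0$, whose regularized stabilized differential $\mathcal{R}(dg) \oplus \epsilon^1$ is homotopic rel.\ $\partial M_0$ to $\Phi|_{M_0}$. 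Canonically enrich $g$ as in Example~\ref{ex:double-fold}; by Lemma~\ref{lm:special-enrichment}, the suspension $\mathcal{L}(g,\fe)$ is homotopic to $\mathcal{R}(dg) \oplus \epsilon^1$, and therefore to $\Phi|_{M_0}$. Absorb this last homotopy into $\Phi|_{M_0}$ to achieve $\Phi|_{M_0} = \mathcal{L}(g,\fe)$ on the nose, and redefine $f|_{M_0} := g$.

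The principal technical obstacle is the relative bookkeeping: each of the successive homotopies of $\Phi$ and $f$ above must be performed rel.\ a neighborhood of $\partial M_0$ (and rel.\ $\partial M$ if present), so that the destabilized form on the exterior, the integrability on the collar, and the enriched folded structure on the interior are all preserved simultaneously. The two-case formulation in conclusion~(i) reflects the sign ambiguity in the suspension formula~\eqref{eq:suspension}: in the case $n = 1$ the choices $\mathcal{L}$ and $\mathcal{L}_-$ are genuinely distinct (Remark~\ref{rem:neg-stab}), so the canonical enrichment of $g$ may produce $\mathcal{L}_-(g,\fe)$ in place of $\mathcal{L}(g,\fe)$; for $n > 1$ the sign can be fixed uniformly.
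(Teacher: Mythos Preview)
Your argument has a genuine gap in the third paragraph. You claim that since $M_0 \simeq K$ has homotopy dimension at most $1$, the destabilization obstruction on $M_0$ vanishes \emph{rel $\partial M_0$}. But the obstruction to homotoping the section $\varphi$ to $s_0$ rel $\partial M_0$ lives in $H^m(M_0,\partial M_0;\Z)$, not in $H^m(M_0;\Z)$. For $M_0$ a regular neighborhood of a finite set of $r$ points, $H^m(M_0,\partial M_0;\Z)\cong\Z^r$; for $M_0$ connected, Poincar\'e--Lefschetz duality gives $H^m(M_0,\partial M_0;\Z)\cong H_0(M_0;\Z)\cong\Z$. In neither case does the low homotopy dimension of $K$ help. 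Concretely: if your step~6 worked, then together with step~4 you would have destabilized $\Phi$ on all of $M$, which is impossible when the Euler class of $\Ker\Phi$ is nonzero.

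This is not a repairable detail, because your route to the enriched folded structure on $M_0$ goes through Theorem~\ref{thm:special-folded}, which needs an \emph{unstable} epimorphism as input, and then Lemma~\ref{lm:special-enrichment}, which says the resulting suspension $\cL(g,\fe)$ is homotopic to the destabilized $\mathcal{R}(dg)\oplus\epsilon^1$. So a special folded map can only ever realize destabilizable $\Phi|_{M_0}$; it cannot absorb a nontrivial obstruction. The paper avoids this by building an explicit local model on $M_0$ (Lemma~\ref{lem:local-model}) which is \emph{not} special folded: it has a single definite fold with membrane $\overline U_0$, and its suspension $\cL(h,\fe)$ carries a section $s_K$ whose degree is $\chi(K)$. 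One then chooses the $1$-complex $K$ so that $\chi(K)$ equals the global obstruction (this is where $n>1$ is used, since a $1$-complex in $I^1$ has $\chi\geq 0$; for $n=1$ negative degrees are achieved by switching to $\cL_-$). Your explanation of the $n=1$ dichotomy is accordingly also off: the sign issue is not about which suspension the canonical enrichment of a special folded map happens to produce (both are destabilizable), but about the sign of the obstruction one needs to match.
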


The strategy of the proof of the proposition is as follows.  First we
forget about (iii) in the proposition, and only worry about how $\phi$
and $\Phi$ looks like on $M_0$.  We prove that this can be done for a
large class of possible $M_0$'s.  After that, we consider the
obstruction to destabilizing $\Phi$ outside $M_0$ without changing it
on $M_0$.  This obstruction is essentially an integer, and we prove
that $M_0$ can be chosen so that the obstruction vanishes.

% .  We will
% have $M_0$ sitting inside a cube $I^{n+d} \subset M$ and we first
% construct a local model $M_0 \to I^n$ which is a folded map.  We also
% construct an enrichment.  The folds of the local model do not touch
% $\partial M_0$, so the bundle epimorphism in the local model is
% integrable near $\partial M_0$.  Then we deform a given $\Phi: TM \oplus
% \epsilon^1 \to TX \oplus \epsilon^1$ over $I^{n+d}$ to agree with the
% local model.  This will prove the first two parts of the proposition.
% To get the last part, we need an Euler class obstruction to vanish.
% We prove that all possible obstruction can be realized inside $M_0$ if
% we choose $M_0$ correctly, and hence there is no obstruction to
% destabilizing outside $M_0$.

We first give a local model for the enriched folded map $M_0 \to X$.
Let $I = [-1,1]$ be the interval, and let $K \subset \Int(I^n)$ be a
simplicial complex.  We will consider $I^n \subset I^{n+d}$ as the
subset $I^n \times \{0\}$.  Let $U_0 \subset I^n$ be a regular
neighborhood of $K \subset I^n$, and let $U \subset I^{n+d}$ be a
regular neighborhood of $K \subset I^{n+d}$.  Let $\pi: I^{n+d} \to
I^n$ be the projection.  In order to avoid confusion we will write
$\overline{U}_0 = U_0 \times \{0\} \subset I^{n+d}$, and hence $U_0
= \pi(\overline{U}_0)$.  We can assume that $\pi_{|\partial U}
: \partial U \to I^{n}$ is a folded map, with fold $\partial
\overline{U}_0 \subset \partial U$ which has index 0 with respect to
the inward coorientation of $\partial U_0 \subset U_0$.

  Let $N=\partial U \times [-1,1]$ be  a  bicollar  of $\p U$, i.e.   an embedding $N \to I^{n+d}$,
which maps $(u,0) \mapsto u
\in \partial U$ and  $\partial U \times [-1,0]$ to $U$.  Let $M_0 = U \cup N$.  We construct a folded map
$M_0 \to M_0$ in the following way.  First pick a function $\phi:
[-1,1] \to [-1,1] \times \R$ with the following properties.
\begin{enumerate}[(i)]
\item $\phi(\pm s) = (\pm s,0)$ for $s > .5$;
\item $\phi_1'(s) > 0$ for $s < 0$, $\phi_1'(s) < 0$ for $s > 0$, and
  $\phi_1''(0) < 0$;
\item $\phi_2'(0) < 0$. %\textbf{[OR $>$ -- CHECK]}.
\end{enumerate}
In particular $\phi$ is an immersion of codimension 1, and $\phi_1:
[-1,1]$ is a folded map with fold $\{0\}$.  Extend to an immersion
$\phi: [-1,1] \times \R \to [-1,1]\times \R$ with the property that
\begin{align*}
  \phi(\pm s,t) = (\pm s,\pm t)
\end{align*}
for $s > .5$.  Recall that $N = \partial U \times [-1,1]$ and
construct a codimension 0 immersion $\gamma_0: N \times \R \to N
\times \R$ by
\begin{align*}
  \gamma_0(u,s,t) = (u,\phi(s,t)),
\end{align*}
for $u \in \partial U$.  Extend to a codimension 0 immersion
$\gamma_1: M_0 \times \R \to M_0 \times \R$ by
\begin{align*}
  \gamma_1(x,t) = (x,-t)
\end{align*}
for $x \in M_0 - N$.  For $m \in M_0$, let $\gamma_1(m) \in M_0$ be
the first coordinate of $\gamma_1(m,0) \in M_0 \times \R$.
Differentiating $\gamma_1$ then gives a bundle map
\begin{align}\label{eq:27}
  \Gamma: TM_0 \oplus \epsilon^1 \to TM_0 \oplus \epsilon^1
\end{align}
with underlying map $\gamma: M_0 \to M_0$.  We record some of its
properties in a lemma.
\begin{lemma}\label{lem:local-model}
  \begin{enumerate}[(i)]
  \item The map $\gamma_1$ is homotopic in the class of submersions
    (=immersions) to the map $\Id \times (-1): M_0 \times \R \to M_0
    \times \R$.
  \item Let $M_0 \subset I^{n+d}$ and $\gamma$ and $\Gamma$ be as
    above.  Then $\gamma$ is a folded map.  The fold is $\partial U
    \subset N \subset M_0$, and the image of the fold is also
    $\partial U$.  Near $\partial M_0$, the bundle map $\Gamma$ is
    \emph{integrable}, i.e.\ $\Gamma = D\gamma \oplus \epsilon^1$.
  \item Let $\pi: I^{n+d} \to I^n$ be the projection and define a
    bundle map
    \begin{align*}
      H: TM_0\oplus \epsilon^1 \to T(I^n) \oplus \epsilon^1
    \end{align*}
    by $H = (D\pi \oplus \epsilon^1) \circ \Gamma$.  It covers the
    folded map $h = \pi \circ \gamma: M_0 \to I^n$, which has fold
    $\partial \overline{U}_0 \subset \partial U \subset M_0$.  The
    image of the fold is $\partial U_0 \subset I^n$, and it has
    index 0 with respect to the inward coorientation of $U_0$.
  \item\label{item:1} A membrane for the underlying map $h$ can be
    defined as $V = \overline{U}_0$ with the framing  $K=(K_+=\Span( 
   \partial/\partial x_{n+1}, \dots,
    \partial/\partial x_{n+d}$, and $\Ker_-(V)),K_-=\{0\}$.  This defines an enrichment $\fe$ for $h$.
    Finally, $H$ is integrable over $\partial M_0$ and $H =
    \cL(h,\fe)$.
  \end{enumerate}
\end{lemma}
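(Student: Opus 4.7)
The plan is to verify the four parts directly by inspection of the local model, building from the simpler statements to the substantive one.

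For part (i), observe that $\gamma_1$ equals $\Id \times (-1)$ outside of $N \times \R$, and on $N \times \R$ is the map $(u,s,t) \mapsto (u,\phi(s,t))$. Both $\phi$ and $(s,t)\mapsto(s,-t)$ are codimension zero immersions of the strip $[-1,1]\times\R$, and a regular homotopy between them can be produced by smoothly deforming the fold of $\phi_1$ out of the interior, the immersion condition being preserved throughout because $\phi_2'(0)<0$ keeps $D\phi$ invertible at the fold point. Extending this regular homotopy by the identity on the complement yields the desired homotopy $\gamma_1\simeq\Id\times(-1)$ through immersions.

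For parts (ii) and (iii), the underlying map $\gamma(x)=\prr_1\gamma_1(x,0)$ is the identity on $M_0 - N$ and sends $(u,s)\in N$ to $(u,\phi_1(s,0))$. The hypotheses on $\phi$ imply that $s\mapsto\phi_1(s,0)$ is Morse with a unique non-degenerate maximum at $s=0$ and equals $s$ for $|s|>.5$; hence $\gamma$ is folded with fold locus $\partial U\times\{0\}=\partial U$, whose image is $\partial U$ itself. Near $\partial M_0$ we have $\phi(s,t)=(s,t)$ identically, so $\gamma_1=\Id$ there and thus $\Gamma = D\gamma\oplus\epsilon^1$. Post-composing with $\pi$ gives the folded map $h=\pi\circ\gamma$ with fold $\partial\overline{U}_0\subset\partial U$ (the intersection of $\partial U$ with $U_0\times\{0\}$) and image $\partial U_0 \subset I^n$; the index is $0$ with respect to the inward coorientation of $\partial U_0$, combining the hypothesis that $\pi|_{\partial U}$ has an index $0$ fold at $\partial\overline{U}_0$ with the ``inward-pointing maximum'' contributed by $\phi_1$.

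The substantive step is (iv). The candidate membrane $V=\overline{U}_0\subset M_0$ has $\partial V=\partial\overline{U}_0$ equal to the fold of $h$, and $h|_{\Int V}$ is an embedding onto $U_0$. With $K_-=\{0\}$ and $K_+=\Span(\partial/\partial x_{n+1},\dots,\partial/\partial x_{n+d})$, this is a framed membrane of index $0$: the bundle $K_+$ lies in $\Ker\,dh$ (these directions project trivially under $\pi$), is transverse to $TV$, and satisfies $K_+\subset\Cone_+(\partial V)$, which is precisely the index $0$ condition with respect to the inward coorientation supplied by (iii). To identify $H=(D\pi\oplus\epsilon^1)\circ\Gamma$ with $\mathcal{L}(h,\fe)$ up to homotopy of bundle epimorphisms, write both in the matrix form~\eqref{eq:21}: choose a coordinate $u$ on a neighborhood of $V$ with $V=\{u=0\}$ and $du$ defining the inward coorientation (in the model one may take $u=\phi_1(s,0)-\phi_1(0,0)$ after a small smoothing). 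Then the $(2,1)$-entry $\alpha=du$ and the section $(X,q)=(\cos\theta\,\partial/\partial u,\sin\theta)$ from the proof of Proposition~\ref{prop:canonical-enrichment} can be read directly off the construction of $\Gamma$ followed by $D\pi$; near $\partial M_0$ both reduce to $Dh\oplus\epsilon^1$ by (ii). The main obstacle is precisely this last identification, since $\mathcal{L}(h,\fe)$ depends on auxiliary choices of $u$, $\theta$, and the section $(X,q)$; the key point is that any two bundle epimorphisms covering $h$, integrable near $\partial M_0$ and compatible with the same enrichment $\fe$, lie in a single homotopy class of epimorphisms, so the matching need only be checked in the local model.
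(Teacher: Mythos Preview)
Your proposal is correct and is precisely the approach the paper intends: the paper's own proof is simply ``We leave this as an easy exercise for the reader.\ Cf also Section~\ref{sec:basis-surgery} and the proof of Proposition~\ref{prop:canonical-enrichment},'' and you have carried out that exercise by direct inspection of the local model, invoking the construction of $\mathcal{L}(h,\fe)$ from Proposition~\ref{prop:canonical-enrichment} for part~(iv) exactly as suggested.
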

\begin{proof}
  We leave this as an easy exercise for the reader.  Cf also
  Section~\ref{sec:basis-surgery} and the proof of
  Proposition~\ref{prop:canonical-enrichment}. 
\end{proof}

Let  us also point that we
could equally well have based the construction on a map $\phi_-:
[-1,1] \to [-1,1]\times \R$ defined as $\phi$ above, except that we
replace the condition $\phi_2'(0) < 0$ by $\phi_2'(0) > 0$.  Using
this map as a basis for the construction gives a bundle epimorphism
\begin{align*}
  H_-: TM_0 \oplus \epsilon^1 \to T(I^n) \oplus \epsilon^1
\end{align*}
which also satisfies the conclusion of Lemma~\ref{lem:local-model},
except that (\ref{item:1}) gets replaced by $H = \cL_-(h,\fe)$.

\begin{proof}[Proof of Proposition~\ref{prop:destabilization} (i) and (ii)]
  Let $\Phi: TM \oplus \epsilon^1 \to TX \oplus \epsilon^1$ be as in
  the proposition.  By Phillips' theorem we can assume that $\Phi$ is
  induced by a submersion
  \begin{align*}
    \Psi: M \times \R \to X \times \R.
  \end{align*}
  Pick cubes $D = I^{n+d} \subset M$ and $I^n \subset X$.  We
  regard $M_0 \subset D \subset M$ and let $\pi: D \to I^n$ denote
  the projection to the first $n$ coordinates.  We can assume that
  $\Psi(D \times \R) \subset I^n \times \R$.  The space of
  submersions $D \times \R \to I^n \times \R$ is, by Phillips' theorem,
  homotopy equivalent to $O(n+d+1)/O(d)$ which is connected, and hence
  we may assume that $\Psi_{|D} = \pi \times (-1)$.  (Here we used
  $d\geq 1$.  In the case $d = 0$, we get $O(n+d+1)$ which has two
  path components, but after possibly permuting coordinates on $D =
  I^{n+d}$, we may assume that $\Psi_{|D}$ is in the same path
  component as $\pi \times (-1)$.)  Hence, after a homotopy of $\Psi$
  in the class of submersions, we may assume that $\Psi_{|M_0} = \pi
  \times (-1)$.

  By Lemma~\ref{lem:local-model}(i), we can assume after a further
  deformation of $\Psi$ in a neighborhood of $M_0 \subset D$, that
  it agrees with the map $(\pi\times \R) \circ \gamma_1$.  This proves
  (i) and (ii) in the proposition.
\end{proof}
Remember that the domain $M_0$ is a regular neighborhood of a
simplicial complex $K \subset I^n$.  The local model in
Lemma~\ref{lem:local-model} worked for any such $K$, but in the proof
of (i) and (ii) in the proposition we used that $K$ had dimension at
most 1.  On the other hand, $K$ could still be arbitrary within that
restriction.  It remains to prove that $\Phi$ can be destabilized
outside $M_0$, for a suitable choice of $K$.  There is an obstruction
to doing this, which we now describe.

Let $s$ denote a section of $TM \oplus \epsilon^1$ such that $\Phi
\circ s$ is the constant section $(0,1) \in f^*(TX) \oplus
\epsilon^1$.  This defines $s$ uniquely up to homotopy (in fact $s$ is
unique up to translation by vectors in the kernel of $\Phi$).  Over
$M_0$ the epimorphism $\Phi$ is induced by a composition
\begin{align*}
  M_0 \times \R \xrightarrow{\gamma_1} M_0 \times \R
  \xrightarrow{\mathrm{proj}} X \times \R,
\end{align*}
and on $M_0$ we may choose $s$ so that $D\gamma_1$ takes $s$ to a unit
vector in the $\R$-direction.  Another relevant section is the
constant section $s_\infty(x) = (0,1) \in S(TM \oplus \epsilon^1)$.
We have $s(x) = s_\infty(x)$ for $x \in \partial M_0$.  Our aim is to
change $\Phi$ by a homotopy and achieve $s(x) = s_\infty(x)$ for all
$x$ outside $M_0$.  In each fiber, $s(x) \in S(T_x M \oplus \R) =
S^{n+d}$, so by induction of cells of $M - D$, we can assume that
$s_\infty(x) = s(x)$ outside $D$, since $M-D$ can be built using cells
of dimension at most $n+d-1$.  It remains to consider $D - M_0$.  Let
$s_K$ be the section which agrees with $s$ on $M_0$ and with
$s_\infty$ outside $M_0$.  Thus $s$ and $s_K$ are both sections of
$S(TM \oplus \epsilon^1)$ which equal $s_\infty$ outside $D$.  We
study their homotopy classes in the space of such sections.

Using stereographic projection, the fiber of $S(TM\oplus \epsilon^1)$
at a point $x \in M$ can be identified with the one-point
compactification of $T_x M$.  Hence we can think of sections as
continuous vector fields on $M$, which are allowed to be infinite.
The section at infinity is $s_\infty(x) = (0,1) \in S(T_x M \oplus
\R)$.  In this picture we have the following way of thinking of $s_K$:
For $x \in \partial U$, $s_K(x)$ is a unit vector orthogonal to
$\partial U$ pointing outwards.  Moving $x$ away from $\partial U$ to
the \emph{inside} makes $s_K(x)$ smaller and it gets zero as we get
far away from $\partial U$.  Moving $x$ away from $\partial U$ to the
\emph{outside} makes $s_K(x)$ larger and it gets infinite as we get
far away from $\partial U$.  The section $s_K$ depends up to homotopy
only on the simplicial complex $K \subset I^n$, hence the notation.

\begin{lemma}
  There is a bijection between $\Z$ and sections of $S(TM \oplus
  \epsilon^1)$ which agree with $s_\infty$ outside $D$.  The bijection
  takes $s_K$ to $\chi(K) \in \Z$.
\end{lemma}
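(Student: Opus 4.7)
The plan is to prove the two assertions of the lemma in turn. For the bijection with $\Z$: since $D = I^{n+d}$ is contractible, the sphere bundle $S(TM \oplus \epsilon^1)$ is trivial over $D$, and we fix a trivialization in which $s_\infty$ becomes the constant section at the north pole. Then sections agreeing with $s_\infty$ outside $D$ correspond, up to the homotopy relation in question, to pointed maps $D/\partial D \to S^{n+d}$, i.e.\ to elements of $\pi_{n+d}(S^{n+d}) \cong \Z$ via the degree.

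For the identification $s_K \mapsto \chi(K)$, I would reinterpret $s_K$ as a vector field through the stereographic picture already set up in the paragraph preceding the lemma, and then compute its degree by Poincar\'e-Hopf. In that picture $s_K$ is a continuous $TM$-valued vector field (with values allowed at $\infty$ = north pole) that equals $\infty$ outside $U$, is the outward unit normal along $\partial U$, and vanishes on a neighborhood of $K$ inside $U$. After a $C^0$-small perturbation supported in the interior of $U$, we may assume the field is smooth with finitely many non-degenerate zeros, all contained in $U$, and still outward pointing along $\partial U$.

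The degree of the associated map $D/\partial D \to S^{n+d}$ equals, by the Hopf degree theorem, the signed sum of Poincar\'e-Hopf indices of these zeros (equivalently, the signed count of preimages of a small regular value $v \in \R^{n+d}$, which lie in $U$). Since the perturbed field points outward along $\partial U$, Poincar\'e-Hopf for a compact manifold with boundary gives
\[
    \sum_p \operatorname{index}_p = \chi(U),
\]
and since $U$ is a regular neighborhood of $K$ in $I^{n+d}$, it deformation retracts onto $K$, so $\chi(U) = \chi(K)$. The main step requiring attention is arranging the perturbation so that it neither creates zeros outside $U$ nor disrupts the outward-pointing behavior on $\partial U$; this can be done by taking the perturbation $C^0$-small and compactly supported in $\Int U$, since the unperturbed field is already nonzero on $\partial U$ and equal to $\infty$ outside $U$.
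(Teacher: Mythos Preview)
Your proof is correct and follows essentially the same approach as the paper's. Both arguments identify the sections in question with pointed maps $S^{n+d}\to S^{n+d}$ classified by degree, reinterpret $s_K$ as a (possibly infinite) vector field via stereographic projection, and compute the degree as a sum of Poincar\'e--Hopf indices equal to $\chi(K)$. The only minor variation is in the last step: the paper writes down an explicit perturbation using the standard barycentric flow on $K$, so that the zeros sit exactly at the barycenters of simplices with index $(-1)^{\dim\sigma}$, and then sums these directly to get $\chi(K)$; you instead take a generic perturbation supported in $\Int U$ and invoke the Poincar\'e--Hopf theorem for the compact manifold-with-boundary $U$ (outward field on $\partial U$) to obtain $\chi(U)=\chi(K)$. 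Either route works and the underlying idea is the same.
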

\begin{proof}
  The tangent bundle $TM$ is trivial over $D$, so the space of such
  sections is just the space of pointed maps $S^n \to S^n$ and
  homotopy classes of such are classified by their degree, which is an
  integer.
 
  We have assumed $U \subset D = I^{n+d}$ so using the standard
  embedding $D \subset \R^n$ we can work entirely inside $\R^n$.
  The geometric interpretation of $s_K$ given above can then be
  rephrased even more conveniently.  Let $r: U \to K$ be the
  retraction in the tubular neighborhood, and let
   \begin{align}\label{eq:26}
    \tilde s_K(x) = x - r(x)
  \end{align}
  for $x\in U$.  Pick any continuous extension of $\tilde s_K$ to $D$
  with the property that when $x \not\in U$, we have
  \begin{align*}
    \tilde s_K(x) \in (T_xM - \{0\})\cup \{\infty\}.
  \end{align*}
  Up to homotopy there is a unique such extension because we are
  picking a point $\tilde s_K(x)$ in a contractible space.  Then
  $\tilde s_K \simeq s_K$.

  To calculate the degree of the corresponding map we perturb even
  further.  Remember that any simplicial complex $K$ has a standard
  vector field with the following property: The stationary points are
  the barycenters of simplices and the flowline starting at a point
  $x$ converges to the barycenter of the open cell containing $x$.
  Let $\psi_\epsilon: K \to K$ be the time $\epsilon$ flow of this
  vector field, and define a vector field $\hat s_K$ just like $\tilde
  s_K$, except that we replace the right hand side of~\eqref{eq:26} by
  $x - \psi_\epsilon \circ r(x)$ for some small $\epsilon > 0$.

  The resulting vector field vanishes precisely at the barycenters of
  $K$, and the index of the vector field at the barycenter of a
  simplex $\sigma$ is $(-1)^{\dim(\sigma)}$.  The claim now follows
  from the Poincar\'e-Hopf theorem.
\end{proof}

\begin{proof}[Proof of Proposition~\ref{prop:destabilization} (iii)]
  Let us first consider the case $n > 1$.  We have proved that all
  possible sections of $S(TM \oplus \epsilon^1)$ which agree with
  $s_\infty$ outside $D$, are homotopic to $s_K$ for some $K$.  Then we can choose $K$ such that $s_K
  \simeq s$.  Since $s_K$ and $s$ agree on $M_0$ there is a homotopy
  of $s$, fixed over $M_0$, so that $s(x) = s_\infty(x)$ for all $x
  \in M - M_0$.  This homotopy lifts to a homotopy of bundle
  epimorphisms $\Phi: TM \oplus \epsilon^1 \to TX \oplus \epsilon^1$,
  and then (iii) is satisfied.

  For  $n=1$ we may not be able
  to choose a $K \subset X$ with the required Euler characteristic,
  since subcomplexes of 1-manifolds always have non-negative Euler
  characteristic.  However, vector fields of negative index can be
  achieved as $-s_K$, and that is the vector field that arises if we
  use the negative suspension $\cL_-(f,\fe)$.
\end{proof}

\subsection{From formal epimorphisms to enriched folded
  maps}\label{sec:epi-folds}

The following theorem summarizes the results of Section
\ref{sec:prelim}.
\begin{theorem}\label{thm:epi-folds}
  Let $\Phi: TM \oplus \epsilon^1 \to TX \oplus \epsilon^1$ be a
  bundle epimorphism.  Suppose that  $d>0$ and $\Phi$ is integrable in a
  neighborhood of a closed set $A \subset M$.  Then there is a
  homotopy of epimorphisms $\Phi_t: TM \oplus \epsilon^1 \to TX \oplus
  \epsilon^1$, $t \in [0,1]$, fixed near $A$, which covers a homotopy
  $\phi_t: M \to X$, such that $\phi_1: M \to X$ is folded, and
  $\Phi_1 = \cL(\phi_1, \fe)$ for some enrichment $\fe$ of
  $\phi_1$. If $n>1$ then the image $\overline{C} \subset X$  of each    fold component  $C \subset M$  of $\phi_1$ bounds a domain in $X$.
 \end{theorem}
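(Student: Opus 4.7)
The strategy is to combine Proposition~\ref{prop:destabilization} with Theorem~\ref{thm:special-folded}. First, I apply Proposition~\ref{prop:destabilization} to $\Phi$: since $\Phi$ is already integrable near $A$, the cube $D \subset M$ in which the destabilization construction is performed can be chosen disjoint from $A$, so the resulting homotopy of epimorphisms can be kept fixed near $A$. After this homotopy we obtain a compact submanifold $M_0 \subset M$ of homotopy dimension at most $1$ (disjoint from $A$) such that $f|_{M_0}$ is folded, carries an enrichment $\fe_0$ with $\Phi|_{M_0} = \cL(f|_{M_0}, \fe_0)$, $\Phi$ is integrable in a collar of $\partial M_0$, and on $M \setminus M_0$ the epimorphism destabilizes as $\Phi = \overline{\Phi} \oplus \epsilon^1$ for an unstable bundle epimorphism $\overline{\Phi}: TM|_{M \setminus M_0} \to TX$.

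Second, since $d > 0$, I apply Theorem~\ref{thm:special-folded} to $\overline{\Phi}$ on the manifold with boundary $M \setminus \Int(M_0)$, taking as the closed set $K = \partial M_0 \cup (A \setminus \Int(M_0))$, on which $\overline{\Phi}$ is already integrable (equal to $df$). This produces a special folded map $g: M \setminus \Int(M_0) \to X$ that agrees with $f$ near $K$ and whose regularized differential $\cR(dg)$ is homotopic rel $TM|_K$ to $\overline{\Phi}$. By Example~\ref{ex:double-fold}, $g$ carries a canonical enrichment $\fe_1$, and Lemma~\ref{lm:special-enrichment} yields a homotopy from $\cL(g,\fe_1)$ to the stabilized regularized differential $\cR(dg) \oplus \epsilon^1 \simeq \overline{\Phi} \oplus \epsilon^1 = \Phi|_{M \setminus M_0}$, relative to the integrable locus.

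Gluing $f|_{M_0}$ to $g$ along $\partial M_0$ produces a global folded map $\phi_1: M \to X$. The framed membranes of $\fe_0$ lie inside $M_0$ and those of $\fe_1$ lie in $M \setminus \Int(M_0)$, so they combine into a global enrichment $\fe$ of $\phi_1$; concatenating the two homotopies then gives the desired $\Phi_t$ with $\Phi_1 = \cL(\phi_1, \fe)$, fixed near $A$. For the last assertion, when $n > 1$ the fold images of $g$ bound balls in $X$ by the very definition of a special folded map, while the fold images coming from $M_0$ are boundaries of regular neighborhoods of a $1$-complex inside a cube $I^n \subset X$, and hence bound the corresponding neighborhoods.

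The main technical obstacle is the case $n = 1$, where Proposition~\ref{prop:destabilization} may give $\cL_-(f|_{M_0}, \fe_0)$ in place of $\cL(f|_{M_0}, \fe_0)$ on $M_0$; the sign must then be absorbed, either by invoking the negative suspension on the special-folded side (see Remark~\ref{rem:neg-stab}) or by reversing the relevant local orientation. This is also the reason why the "bounds a domain" conclusion is stated only for $n > 1$. A secondary point is the compatibility of the two enrichments across the collar of $\partial M_0$, which is taken care of by the local normal form of Lemma~\ref{lemma-replacing-eq:14} together with the relative nature of Theorem~\ref{thm:special-folded}, ensuring that both $\fe_0$ and $\fe_1$ extend the same integrable germ across $\partial M_0$.
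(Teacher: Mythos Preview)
Your proposal is correct and follows essentially the same two-step approach as the paper: first invoke Proposition~\ref{prop:destabilization} to obtain the domain $M_0$ with its enrichment and a destabilized $\Phi$ on the complement, then apply Theorem~\ref{thm:special-folded} (together with Lemma~\ref{lm:special-enrichment}) on $M\setminus M_0$.  You have in fact supplied more detail than the paper's two-line proof, correctly handling the relative clause (choosing the cube disjoint from $A$), the gluing of the two enrichments across the collar of $\partial M_0$, and the ``bounds a domain'' assertion for $n>1$; your flagging of the $\cL_-$ issue when $n=1$ is also appropriate, as the paper's proof does not address it explicitly.
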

\begin{proof}
  First use Proposition~\ref{prop:destabilization} to make $\phi$
  enriched folded over a domain $M_0$, such that $\Phi$ destabilizes
  outside $M_0$.  Then use Lemma~\ref{lm:special-enrichment} to make
  $(\phi,\Phi)$ special enriched outside $M_0$.
 \end{proof}

\section{Cobordisms of folded maps}\label{sec:cob}

Let us rephrase the results of the previous section more
systematically, and put them in the context of the overall goal of the
paper.  So far we have mainly studied the relation between formal
fibrations and enriched folded maps.  Let us formalize the result.  We
consider various bordism categories  of maps $f: M \to X$ such that $M$ and $X$ are both oriented and $X$ is a
compact manifold, possibly with boundary, and which satisfy the
following two conditions:

\begin{description}
\item{C1.}  $f$ has $\Tinf$ ends, i.e.\ there is (as part of the
  structure) a germ at infinity of a diffeomorphism $j:T_\infty\times
  X\leadsto M$ such that $f\circ j=\pi$, where $\pi$ is a germ at
  infinity of the projection $X\times\Tinf\to X$.  This trivialized
  end will be called the \emph{standard end} of $M$.
\item{C2.} There is a neighborhood $U$ of $\partial X$ such that
  $f^{-1}(U) \to U$ is a fibration (i.e.\ smooth fiber bundle) with
  fiber $\Tinf$.
\end{description}

When $\dim(X)>1$ we will always assume that all fold components are homologically trivial, and in particular,  the image $\oC\subset X$ of any   fold component  $C
\subset M$  bounds a
domain in $X$.  Note that this condition is preserved by all fold surgeries which we discussed above in Section \ref{sec:surgery-general}.  We have been studying the following bordism categories of maps.
\begin{definition}
  \begin{enumerate}[(i)]
  \item $\Fib$ is the category of \emph{fibrations} (smooth fiber
    bundles) with fiber $\Tinf$, which satisfy C1 and C2.
  \item $\Fold^{\$}$ is the category of enriched folded maps, satisfying
    C1 and C2.
  \item $\FFib$ is the category of \emph{formal fibrations}, i.e.\ bundle
    epimorphisms $\Phi: TM \oplus \epsilon^1 \to TX \oplus \epsilon^1$
    with underlying map $f: M \to X$, such that $f$ satisfies C1 and
    C2, and such that $\Phi = df \oplus \epsilon^1$ near
    $f^{-1}(\partial X)$.
 
  \end{enumerate}
  are the objects in  bordism
  categories,   Bordisms  in the categories $\Fib$, $\Fold^{\$}$, and $\FFib$ are required to be trivial (as bordisms, and not as fibrations!) over a
  neighborhood of $\partial X$. 
  \end{definition}

We have functors
\begin{align*}
  \Fib \to \Fold^{\$} \xrightarrow{\cL} \FFib.
\end{align*}
The functor $\Fib \to \Fold^{\$}$ is the obvious inclusion.
Everything we said in Chapter~\ref{sec:prelim} works just as well with
the conditions C1 and C2 imposed, and hence
Proposition~\ref{prop:canonical-enrichment} gives the functor $\cL:
\Fold^{\$} \to \FFib$.

In this setup, the main goal of the paper is to prove that any object
in $\FFib$ is cobordant to one in $\Fib$.  Theorem~\ref{thm:epi-folds}, which also works for manifolds with boundary,
says that $\cL$ is essentially surjective: any object in $\FFib$ is
cobordant to an element in the image of $\cL$.  It remains to see that
any object of $\Fold^{\$}$ is cobordant to one in $\Fib$.  In fact
Theorem~\ref{thm:epi-folds} is a little stronger: any object of
$\FFib$ is cobordant to one in the image of $\cL$ using only
homotopies of the underlying maps, i.e.\ no cobordisms of $M$ and $X$.
In contrast, comparing $\Fib$ to $\Fold^{\$}$ involves changing $M$
and $X$ by \emph{surgery}.  The surgery uses the membranes in the
enrichment, and also makes crucial use of a form of Harer's stability
theorem.

In fact, it is convenient to work with a slight modification of the
category $\Fold^{\$}$.
\begin{definition}
  Let $\wt\Fold^{\$}$ be the category with the same objects as
  $\Fold^{\$}$, but where we  formally add  morphisms which create double folds
  along a submanifold $C$,
  i.e. singularities of the
  form~(\ref{eq:double-fold2}). We also formally add inverses of these morphisms. When $n>1$ we additionally require the
  manifold $C$ to be homologically trivial.  By Example~\ref{ex:double-fold}, this has a
  canonical enrichment, and we formally add morphism in
  $\wt\Fold^{\$}$ in both directions between the map $f: M \to X$ and
  the same map $f':M\to X$ with a double $C$-fold singularity created.
\end{definition}
According to \ref{lm:special-enrichment},
 adding a double $C$-fold together with its canonical enrichment changes $\cL(f,\fe)$
only by a homotopy.  Hence the functor $\cL: \Fold^{\$} \to \FFib$
extends to a functor $\wt\Fold^{\$} \to \FFib$. In the proof of our
main theorem, $\Fold^{\$}$ is just a middle step, and it turns out to
be more convenient to work with the modified category
$\wt\Fold^{\$}$.

We leave it as an exercise to the reader to show that when $n>1$ there is, in fact,
no real difference: if two objects in $\wt\Fold^{\$}$ are cobordant,
then they are already cobordant in $\Fold^{\$}$.  We shall not need
this fact.

We prove that any object of $\wt\Fold^{\$}$ is cobordant to one in
$\Fib$ in two steps.
\begin{definition}
  Let $\Fold_h^{\$}\subset \Fold^{\$}$ be the subcategory where
  objects and morphisms are required to satisfy the following
  conditions.
  \begin{enumerate}[(i)]
  \item Folds are hyperbolic, i.e.\ have no folds of index 0 and 3.
  \item For all $x \in X$, the manifold $f^{-1}(x) - \Sigma(f)$, i.e.\
    the fiber minus its singularities, is connected.
  \end{enumerate}
  Let $\wt\Fold^{\$}_h$ be the category with the same objects, but
  where we formally add morphisms (in both directions) which create double folds.
\end{definition}

Our main result,   Theorem \ref{thm:main1}, follows from Theorem \ref{thm:epi-folds} and  the following two propositions.
\begin{proposition}\label{prop:enriched-hyperbolic}
 Let $d>0$. Any enriched folded map $( f:M\to X,\fe)\in\Fold^{\$}$ is bordant in
  the category $\wt{\Fold}^{\$}$ to an element in $\Fold^{\$}_h$
\end{proposition}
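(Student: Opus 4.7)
The plan is to produce the required bordism in two stages corresponding to the two conditions defining $\Fold^{\$}_h$: absence of elliptic folds and connectedness of smooth fibers. The principal tool in both stages is the fold surgery machinery of Section~\ref{sec:surgery-general}, and the hypothesis $d > 0$ is essential in the first stage.

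\emph{Stage 1 (elimination of elliptic folds).} Let $C$ be an elliptic fold component of $f$ and let $(V,K)$ be the adjacent framed membrane. After possibly reversing the coorientation of $\oC$, I may assume $C$ has index $0$, so the framing index is $k=0$, $K_-=0$, and $\p V = \p_+(V,K)$ while $\p_-(V,K) = \varnothing$. Passing to the dual framing $\oK = (K_+, 0)$ produces a framed membrane of index $d$ with $\p_+(V,\oK) = \varnothing$; this step requires $d \geq 1$. The resulting configuration is exactly the input for membrane--eliminating surgery (Construction~\ref{constr:membrane-elim}, via the dualization reduction at the end of Section~\ref{sec:surgery-general}). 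I construct the required basis $(h \colon S^{d-1}P \to M, \mu)$ by taking $P$ diffeomorphic to $\oV$ and using the normal form of Lemma~\ref{lemma-replacing-eq:14} near $C$ together with a tubular neighborhood of $\Int V \subset M$; the framing $\mu$ arises from the trivialization of $\oK_- = K_+$. After shrinking the tubular neighborhood we may arrange that $h(S^{d-1}P)$ is disjoint from every other membrane of $\fe$. The resulting cobordism lives in $\Fold^{\$}$ and kills $C$ together with $V$ while leaving the rest of $(f,\fe)$ intact. Iterating over the finitely many elliptic folds (index $0$ and, by symmetry, index $d+1$) leaves an enriched folded map with only hyperbolic folds.

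\emph{Stage 2 (connecting fibers).} From now on assume all folds of $f$ are hyperbolic. If the connected-fiber condition fails, there exists $x_0 \in X$ such that $f^{-1}(x_0) \setminus \Sigma(f)$ has a connected component $A$ disjoint from the trivialized $\Tinf$ end. Since fiber topology is locally constant on $X \setminus f(\Sigma)$, the component $A$ extends to a connected subset of $f^{-1}(P)$ over a small disc $P \subset X$ containing $x_0$ and disjoint from $f(\Sigma)$. Pick disjoint sections $s_-, s_+ \colon P \to M$ of $f$ with $s_-(P) \subset A$ fiberwise and $s_+(P)$ landing in the $\Tinf$ end, together with a trivialization of $\Ker\, df$ along each. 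The pair $(h = s_- \amalg s_+, \mu)$ is a basis for the fold-creating surgery of Remark~\ref{rem: fold-creating}, which as the inverse of an index-$1$ membrane eliminating surgery produces a bordism in $\Fold^{\$}$ (hence in $\wt\Fold^{\$}$) ending at an enriched folded map carrying a new index-$1$ \emph{hyperbolic} fold and with $A$ joined to the $\Tinf$ end by a fiberwise connected sum over $P$. This reduces by one the number of non-main components of fibers over the connected component of $X \setminus f(\Sigma)$ containing $x_0$; since $X$ has only finitely many such components over any compact region and each component's fiber has only finitely many non-main pieces, iteration yields an enriched folded map in $\Fold^{\$}_h$.

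The technical heart of the argument is the construction of surgery bases compatibly with the enrichment. In Stage 1, disjointness of $h(S^{d-1}P)$ from other membranes is handled by a neighborhood-shrinking argument, and the identification of the local tubular neighborhood with the template of Section~\ref{sec:surgery-template} is a standard application of the parametric Morse lemma. In Stage 2 the delicate point is global: the sections $s_\pm$ must correctly identify the component $A$ inside $f^{-1}(P)$, and this identification must persist as we propagate across the stratification of $X$ by fold images. This is handled by the fact that $f^{-1}(P) \to P$ is a trivial fibration over the disc $P$ and fiber components are locally constant between fold crossings.
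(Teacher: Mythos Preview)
Your Stage~1 contains a genuine gap. From the fact that the elliptic fold $C$ has index~$0$ you correctly deduce that the adjacent membrane $(V,K)$ has framing index~$0$ and $C\in\p_+(V,K)$, but your further claim that $\p V=\p_+(V,K)$ and $\p_-(V,K)=\varnothing$ does \emph{not} follow: the membrane $V$ may have other boundary components, in particular index--$1$ folds lying in $\p_-(V,K)$, so $(V,K)$ may well be mixed. Membrane--eliminating surgery (Construction~\ref{constr:membrane-elim}) requires a \emph{pure} membrane, so your construction does not apply in general. Even when $(V,K)$ happens to be pure, a second problem arises: after dualization the relevant surgery has fold index $d+1$ (not $d$; your $S^{d-1}P$ should be $S^{d}P$), and for $d=2$ the basis for an index--$3$ surgery is, by Section~\ref{sec:surgery-d=2}, an entire connected component of $M$ forming an $S^{2}$--bundle over $\oV$. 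A tubular neighborhood of $V$ supplies only a $D^{d}$--bundle, and the $S^{d}$ born from $C$ need not persist as a separate component over all of $\oV$ (other fold images may cross $\Int\oV$). The paper circumvents both obstacles by a different mechanism: rather than attacking the membrane of $Z$, it uses the formal double--fold morphism in $\wt{\Fold}^{\$}$ to create a parallel pair $Z_1,Z_2$ in the $T_\infty$ end (Lemma~\ref{lm:2fold-creation}), and then performs an index--$0$ membrane--\emph{expanding} surgery over the annulus between $\oZ$ and $\oZ_1$. The basis for an index--$0$ surgery is just the fold itself with a framing and always exists; this is what makes the argument go through for arbitrary enrichments.

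Your Stage~2 is in the right spirit but the termination argument is incomplete. A surgery over a small disc $P$ creates a new fold along $\p P$ and splits the original stratum into $\Int P$ (with one fewer non-main component) and its complement (with the same number as before), so no global count obviously decreases under iteration. The paper handles this with a single global step: Lemma~\ref{lm:discs} (borrowed from \cite{MW07}) produces finitely many disjoint discs $D_i\subset M$ whose union meets every irreducible component of every fiber, and then performs all the index--$1$ fold--creating surgeries at once using pairs $(D_i,\Delta_i)$ with $\Delta_i$ in the $T_\infty$ end. This guarantees connectedness of all fibers after a bounded number of surgeries, which your local iteration does not.
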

\begin{proposition}\label{prop:hyperbolic-fibrations}
  Let $d =2$.  Any enriched folded map $( f:M\to
  X,\fe)\in\Fold^{\$}_h$ is bordant in the category
  $\wt{\Fold}^{\$}_h$ to a fibration from $ \Fib \subset\Fold^{\$}$.
\end{proposition}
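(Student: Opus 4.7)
The plan is to eliminate fold components one at a time by the fold-eliminating surgeries of Section \ref{sec:surgery-general}, producing the necessary surgery data at each step via the generalized Harer stability theorem for folded maps, and using the extra morphisms in $\wt\Fold^{\$}_h$ to adjust homological obstructions as needed. The induction is on the number of fold components.

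For the inductive step, assume $(f,\fe)\in\Fold^{\$}_h$ has at least one fold. Since $d=2$ and the map is hyperbolic, every fold has index $1$ or $2$. After a generic perturbation, the fold images meet transversely in $X$, and under the standing hypothesis that every fold component is homologically trivial (for $n>1$), each $\oC_i$ bounds a domain. I select a component $C$ whose bounding domain $U_C\subset X$ is \emph{innermost}, i.e.\ contains no other fold image in its interior. By passing to the dual framing of the adjacent membrane $V$ if needed (Section \ref{sec:enriched}), I arrange that $C$ has index $1$ with respect to the outward coorientation of $\p U_C$, placing us in the membrane-expanding, index-$1$ setting of Section \ref{sec:surgery-d=2}. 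The enrichment then furnishes, over a collar $\Omega=\p U_C\times(0,1)\subset\Int U_C$, a pair of framed sections $S_\pm:\Omega\to M$ whose image $\{S_-(x),S_+(x)\}$ is the vanishing $0$-sphere of $C$.

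To apply Construction \ref{constr:membrane-exp} and remove $C$, the framed sections $S_\pm$ must extend to all of $U_C$. Over $U_C\setminus\Omega$, the map $f$ is a surface bundle with $T_\infty$-ends and connected fibers, and the extension problem is exactly of the form addressed by Theorem \ref{thm:Harer-geometric} for a suitable pair $\Sigma_1\subset\Sigma_2$ (two framed discs inside a framed pair of pants); the $T_\infty$-ends hypothesis makes the genus condition automatic. Invoking the enriched-folded-map version (Theorem \ref{thm:Harer2}), I obtain a bordism $(f,\fe)\rightsquigarrow(\wt f,\wt\fe)$ in $\wt\Fold^{\$}_h$, trivial outside a neighborhood of $U_C$ and introducing no new folds there, after which $S_\pm$ extends to a full surgery basis $(h,\mu)$ in the sense of Section \ref{sec:basis-surgery}. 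Performing Construction \ref{constr:membrane-exp} with this basis yields a further bordism in $\Fold^{\$}$ whose far end has $C$ (and the part of $V$ adjacent to $C$) removed. Fibrewise this is a connected sum along $\{S_-(x),S_+(x)\}$, which preserves connectedness and does not create index-$0$ or index-$3$ folds, so the output remains in $\Fold^{\$}_h$. The total fold count has strictly decreased; all bordisms are arranged trivial near $\p X$ so the relative part of the conclusion is automatic. Finite iteration produces a fibration in $\Fib$.

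The principal obstacle is the correct invocation of Theorem \ref{thm:Harer2} in the folded setting: one must verify that the surface bundle over $U_C\setminus\Omega$ has precisely the structure (connected fibers, $T_\infty$-ends, trivialized collar $\Omega$ of the boundary, compatible framings of $S_\pm$ coming from $K_\pm$) needed for the generalized Harer stability theorem, and that the resulting bordism grafts cleanly into the ambient enriched folded map on $M$ without altering the structure elsewhere or disturbing the hyperbolicity and fiber-connectedness conditions defining $\Fold^{\$}_h$. A secondary subtlety is the selection of an innermost homologically trivial fold of index $1$: if no such fold is available directly among the original components, one uses $\wt\Fold^{\$}_h$-morphisms to create auxiliary double folds whose homology classes can be arranged to cancel obstructing ones, after which the innermost component of the new configuration can serve as the starting point for the surgery.
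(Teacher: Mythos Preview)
Your overall strategy---pick an innermost fold, use Harer stability to build a surgery basis over the bounded domain, remove the fold, repeat---is sound and in fact slightly simpler than the paper's route, which inducts on \emph{membranes} (treating pure and mixed membranes separately) and invokes the full Corollary~\ref{cor:basis-existence} over domains that may still contain other folds.  Your innermost trick means that $f$ is a genuine fibration over $\Int U_C$, so only the ordinary Harer stability Theorem~\ref{thm:Harer-geometric} is really needed, not the folded version.

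However, there are real gaps. First, the sentence ``By passing to the dual framing of the adjacent membrane $V$ \dots\ I arrange that $C$ has index~$1$'' is incorrect: dualizing the framing swaps $\partial_\pm(V,K)$ and changes the \emph{membrane} index, but the \emph{fold} index of $C$ with respect to a fixed coorientation of $\partial U_C$ is intrinsic to $f$ and cannot be altered this way. For $d=2$, hyperbolic folds have index~$1$ or~$2$ with respect to the outward coorientation, and you must handle both. Second, and more seriously, you only treat the membrane-\emph{expanding} case, which presumes $\oV\subset X\setminus\Int U_C$. If instead $\oV\subset U_C$ (quite possible: think of the inner sphere of a double fold), then since $U_C$ is innermost, $\partial V=C$ and $V$ is pure, and one must use the membrane-\emph{eliminating} surgery (case~M1 of Corollary~\ref{cor:basis-existence}) rather than Construction~\ref{constr:membrane-exp}. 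Your argument as written simply does not apply to this case. Third, the case $n=1$ is not covered: fold images are points, there is no ``innermost'' domain, and the standing homological-triviality assumption is only imposed for $n>1$. The paper deals with $n=1$ separately via the classical Morse cancellation of a pair of critical points of adjacent indices, using Theorem~\ref{thm:Harer2} to produce a transverse circle to the vanishing cycle; this is where the extra $\wt\Fold^{\$}_h$-morphisms are actually used. Finally, your ``secondary subtlety'' paragraph about creating auxiliary double folds to cancel homological obstructions is not needed for $n>1$ (homological triviality is already a standing hypothesis, so an innermost fold always exists) and does not address the real missing ingredient for $n=1$.
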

The proof of the latter uses Harer stability.  This is the only part
of the whole story in which $d=2$ is used in an essential way.
The reason for the condition $d>0$ in the first proposition is explained in Appendix A.

 %%%%%

\section{Generalized Harer stability theorem}
\label{sec:Harer-generalized}
The main ingredient in the proof of \ref{prop:hyperbolic-fibrations}
is Harer's stability Theorem \ref{thm:Harer-geometric}. In this
section we will  deduce a version of \ref{thm:Harer-geometric} 
for enriched folded maps.

\subsection{Harer stability for enriched folded
  maps}\label{Harer-folds}

The proof of the following main theorem of this section will be given in Section
\ref{sec:proof-Harer}. Section \ref{sec:nodal}   contains necessary preliminary constructions.

\begin{theorem}\label{thm:Harer2}
  Let $(f:M\to X,\fe)\in\Fold_h^{\$}$ be an enriched folded map.  Let $U\subset X$ be a closed
  domain with smooth boundary transversal to the images of the folds
  and $\Sigma_1 \subset \Sigma_2$ be compact surfaces with boundary.
  Let
  \begin{align*}
    j: (\partial U \times \Sigma_2) \cup (U \times \Sigma_1) \to M
  \end{align*}
  be a fiberwise embedding over $\p U$ whose image does not intersect
  any fold or membrane and such that the complement of its image in
  each fiber is connected, even after removing the folds of $f$.

  Then, after possibly changing $(f,\fe)$ by a bordism which is
  constant outside $\Int U$, the embedding $j$ extends to an embedding
  of $U \times \Sigma_2$.

 \end{theorem}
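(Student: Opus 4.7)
The plan is to deduce Theorem~\ref{thm:Harer2} from Harer's Theorem~\ref{thm:Harer-geometric} by downward induction on the complexity of the stratification of the fold image $\overline{\Sigma} := f(\Sigma(f)) \cap U$ by self-intersection multiplicity. After a preliminary small bordism supported in $\Int U$ (which remains far from $\mathrm{Im}(j)$ since the latter avoids folds and membranes), I may assume $\overline{\Sigma}$ is a finite collection of immersed codimension-one closed submanifolds in general position in $U$ and meeting $\partial U$ transversally. For each $k \geq 0$, let $U^{(k)} \subset U$ denote the locally closed $k$-fold self-intersection stratum of $\overline{\Sigma}$, and let $N$ be the largest $k$ with $U^{(k)} \neq \varnothing$. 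The induction is downward on $N$.

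When $N = 0$, no folds meet $U$, so $f: f^{-1}(U) \to U$ is a smooth fiber bundle with fiber $\Tinf$. Theorem~\ref{thm:Harer-geometric} applied directly to this bundle produces the required bordism of fibrations, constant over $\partial U$ and extending $j$ to $U \times \Sigma_2$, which is automatically a bordism in $\Fold_h^{\$}$.

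For the inductive step, I would choose pairwise disjoint small open balls $B_p \subset \Int U$ centered at each point $p$ of the finite set $\overline{U^{(N)}}$. Inside each $B_p$, the subset $\overline{\Sigma} \cap B_p$ is diffeomorphic to $N$ transverse coordinate hyperplanes through $p$, and $f$ restricted to $f^{-1}(B_p)$ should admit a local normal form as a product of $N$ elementary hyperbolic fold maps (with indices and enrichment inherited from the participating sheets) times a trivial factor. In this product model the generic fiber over $B_p$ is obtained from a fixed central fiber by $N$ commuting Morse surgeries along disjoint vanishing cycles, and because $\mathrm{Im}(j)$ avoids every fold and every membrane, its fiberwise image lies in a portion of the surface canonically identified across all $N$ of these surgeries. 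The extension problem over $B_p$ then reduces, in the local model, to an ordinary fiberwise Harer-type extension problem in a fixed closed oriented surface, solvable after a local bordism supported in $B_p$. Having extended $j$ over each $B_p \times \Sigma_2$, I would set $U' = U \setminus \bigcup_p \Int B_p$: the fold stratification over $U'$ has maximum multiplicity at most $N - 1$, and the inductive hypothesis applied to $U'$ with the updated boundary data (the original embedding on $\partial U$ together with the new extensions on $\partial B_p$) yields a bordism supported in $U'$ extending $j$ over $U' \times \Sigma_2$. Concatenating the two bordisms completes the inductive step.

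The principal technical difficulty will be establishing the local product normal form inside each $B_p$ and verifying that the canonical fiberwise identifications it produces are compatible both with the enrichment $\fe$ (so the resulting bordism stays in $\Fold_h^{\$}$) and with the avoidance condition on $\mathrm{Im}(j)$. A secondary but still delicate point is to arrange that the connectivity hypothesis on fiberwise complements (even after removing the folds) is preserved by each local and global extension, so that the inductive hypothesis can be legitimately invoked on $U'$; this can be arranged by placing the surface added in each local extension far from the existing image of $j$ and from all folds and membranes. Once these points are checked, the reduction to Theorem~\ref{thm:Harer-geometric} is essentially formal.
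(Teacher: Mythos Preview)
Your downward induction on the maximal self-intersection multiplicity is exactly the right organizing principle, and your base case is correct. But the inductive step contains a genuine error: the top stratum $\overline{U^{(N)}}$ is \emph{not} a finite set of points. If $\dim X = n$, the transverse intersection of $N$ codimension-one fold images has codimension $N$, so $U^{(N)}$ is a manifold of dimension $n-N$ (with boundary in $\partial U$). Covering it by small balls around points will not work unless $N=n$.

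What the paper does instead is treat the whole top stratum $U_k$ at once. Over $U_k$ the map $f$ is not a fiber bundle but a \emph{$k$-nodal fibration}: each fiber is a surface with $k$ nodal singularities (Lemma~\ref{lm:k-multiple}). One then needs a nodal version of Harer stability (Theorem~\ref{thm:Harer-geometric-nodal}), applied to this $k$-nodal fibration over the manifold $U_k$. The resulting bordism of $k$-nodal fibrations is converted back into a bordism of enriched folded maps via the \emph{universal unfolding} construction of Section~\ref{sec:nodal}, which parametrizes all $2^k$ ways of resolving the $k$ nodes and is precisely the local normal form for $f$ over a tubular neighborhood of $U_k$. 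Your ``local product normal form'' gestures toward this, but the unfolding must be taken over the entire stratum $U_k$, not pointwise.

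There is a second point you omit: to keep the bordism inside $\Fold_h^{\$}$ one must carry the framed membranes along. Membranes adjacent to the $k$ intersecting fold sheets extend automatically via the model framings in the unfolding, but membranes belonging to \emph{other} fold components pass through $f^{-1}(U_k)$ as disjoint framed discs in each fiber. The paper handles this by enlarging $\Sigma_1$ and $\Sigma_2$ with $l$ extra discs (one per such membrane) before invoking nodal Harer stability; this guarantees the resulting bordism admits an enrichment extending $\fe$.
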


\medskip
   
The following corollary of Theorem \ref{thm:Harer2} will be the key
ingredient in the proof of
Proposition~\ref{prop:hyperbolic-fibrations}.
\begin{corollary}\label{cor:basis-existence}
  Let $(f: M \to X,\fe)\in \Fold^{\$}_h$ be an enriched folded map.
  Let $C_1, \dots, C_K \subset M$ be fold components of $f$ and
  $\oC_1 \dots, \oC_K \subset X$ their image.  Assume that the
  $\oC_i$, $i=1,\dots, K$, are disjoint and their union bounds together a domain $P \subset X$,     all
  folds $C_i$ have the same index with respect to the outward co-orientation
  of $\partial P$ and that one of the following conditions holds:
   \begin{description}
  \item{M1.} there exists a pure framed membrane $V$ which spans
    $C=\bigcup\limits_1^K C_j$ and projects to $P$;
  \item{M2.} all framed membranes adjacent to $C $ project to the complement
    of $P$ and the folds $C_j$ are either all positive or all negative
    (recall that the boundary of a membrane is split into a positive
    and negative part).
  \end{description}
 
  Then $(f,\fe)$ is bordant in the category $\wt{\Fold}^{\$}_h$ to an
  element $(\wt f,\wt\fe)$ such that
  \begin{itemize}
  \item the bordism is constant over the complement of $\Int P$;
  \item  $(\wt f,\wt\fe)$ has no more membranes than $(f,\fe)$;
  \item $\wt f$ admits a basis for a surgery eliminating the fold $C$.
  \end{itemize}
The surgery eliminates the membrane of $C$ in Case M1 and spreads it over $P$ in Case M2.
\end{corollary}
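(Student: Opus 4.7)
The plan is to construct the required basis over $P$ by combining two pieces of data: a canonical fiberwise embedding near $\p P$ coming from the local fold normal form, and an extension of that embedding over all of $P$ obtained via Theorem~\ref{thm:Harer2}.

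Choose a smooth shrinking $U \subset \Int P$ with $\p U$ disjoint from all folds of $f$. Using Lemma~\ref{lemma-replacing-eq:14} once per fold component $C_j$, fix normal-form coordinates on disjoint tubular neighborhoods of $C_j$ in $M$; these canonically supply, over a collar of $\p P$ in $P$ (in particular over $\p U$), the fiberwise surgery basis data. Next, select an auxiliary compact surface $\Sigma_2$ adapted to the case: for $k=1$ in Case M1 take $\Sigma_2 = D^2$ (a single disk whose center will be the new section $s_+$ complementing $V = s_-$); for $k=1$ in Case M2 take $\Sigma_2 = D^2 \sqcup D^2$ (both sections $s_\pm$ to be constructed); for $k=2$ take $\Sigma_2$ to be a thin annulus, with core circle providing the vanishing cycle. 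Set $\Sigma_1 = \varnothing$ throughout. Use the normal-form coordinates on $\p U$ to define the fiberwise embedding $j : \p U \times \Sigma_2 \hookrightarrow M$ encoding the vanishing-cycle data. In Case M1 with $k=2$, first excise a small tubular neighborhood of $V|_U$ from $M$ and work in the complement, so that $j$ avoids $V$ as required by Theorem~\ref{thm:Harer2}; the framing axioms of $V$ guarantee that $V$ can be reattached at the end.

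By construction $j$ avoids all folds and all membranes of $\fe$, and its image is a compact proper subsurface of each fiber. The $T_\infty$ ends guarantee that the complement in each fiber remains connected after removing the isolated singular points of $f$, so Theorem~\ref{thm:Harer2} applies. It furnishes a bordism $(\wt f, \wt \fe)$, supported in $\Int U$ and introducing no new membranes, together with an extension $\wt J : U \times \Sigma_2 \hookrightarrow \wt M$. Gluing $\wt J$ to the canonical near-$\p P$ piece (and reattaching $V$ in Case M1, $k=2$) produces a fiberwise embedding of a surface over all of $P$; the centers of the disks ($k=1$) or the core circles of the annulus ($k=2$) give the underlying submanifold $h(S^{k-1}P)$ of the desired basis.

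The main obstacle is the framing bookkeeping: the definition in Section~\ref{sec:basis-surgery} requires the normal framing $\mu$ and the tangential frame $dh(\p/\p x_-)$ to coincide on $\p P$ with the enrichment framing $K = (K_-, K_+)$. This must be encoded in the choice of $\Sigma_2$ together with trivializations of its tangent and normal bundles in the fiber, arranged on $\p U$ so as to match the coordinate frames of Lemma~\ref{lemma-replacing-eq:14}. Once this alignment is set up on $\p U$, the extension $\wt J$ automatically propagates the framings over $U$, because the normal bundle of $\wt J$ is pulled back from $\Sigma_2$. A secondary delicate point is the $k=2$ Case M1 excision step: one must check that the extended family of circles in $\wt M \setminus \nu(V|_U)$ reassembles with $V$ into a genuine circle bundle with $V$ as a section, with the correct transversality $dh(-\p/\p x_{k+1})$ pointing into the membrane, which follows from the explicit form of the normal-form embedding near $C$.
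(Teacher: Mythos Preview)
Your approach is the right one and matches the paper's in Case~M2, but there is a genuine gap in Case~M1 stemming from your choice $\Sigma_1=\varnothing$.

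For $k=1$, M1: you extend only the section $s_+$ and use the existing membrane $V$ as $s_-$.  But Theorem~\ref{thm:Harer2} as stated only promises that $j$ extends to an embedding of $U\times\Sigma_2$; it does \emph{not} assert that the extension avoids the membranes of $\fe$.  So your extended $s_+$ may well intersect $V$, and then $(s_-,s_+)$ is not a pair of disjoint sections and you do not have a surgery basis.  The paper avoids this by taking $\Sigma_1=D^2$ to be a disk around the membrane section $S_-$ (already defined over all of $U$ via the framing of $V$) and $\Sigma_2=\Sigma_1\amalg D^2$; then the single embedding $U\times\Sigma_2\hookrightarrow\wt M$ produced by Theorem~\ref{thm:Harer2} forces $S_+$ and $S_-$ to be disjoint.

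For $k=2$, M1: your excision trick does not work.  A membrane-eliminating basis in index~$2$ is a circle subbundle over $P$ \emph{containing} the membrane $V$ as a section (see Section~\ref{sec:surgery-d=2}).  If you excise a neighborhood of $V|_U$ and build a family of circles in the complement, those circles will lie entirely outside that neighborhood and hence will \emph{not} pass through $V$ after you reattach it; there is no mechanism to make them do so.  The paper instead takes $\Sigma_1=D^2$ to be a disk in the annulus $A=\Sigma_2$ centered at the membrane point (the framed membrane furnishes the embedding $U\times D^2\hookrightarrow M$ over all of $U$), and then Theorem~\ref{thm:Harer2} extends this to $U\times A$.  The resulting core circles automatically pass through $V$.

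The fix in both subcases is the same: in Case~M1 take $\Sigma_1$ to be a small disk carried by the membrane framing, not $\varnothing$.
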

%%%%%%%%%%%%%%%%%%%%%%%
\begin{proof}
  Consider a slightly smaller domain $U\subset \Int P$, so that
  $P\setminus \Int U$ is an interior collar of $\p P$ in $P$.  If the
  index of $C$ with respect to the outward coorientation of $\oC$ is $1$ then the $0$-dimensional vanishing cycles over
  points of $\p U$ form two sections $s_\pm:\p U\to M$ of the map $f$.
  In case M1 we can assume that one of these sections, say $s_-$,
  consists of the points of the membrane $V$. The local structure near
  the membrane allows us to construct fiberwise embeddings
  $S_-:U\times D^2\to M$ and $S_+:\p U\times D^2\to M$ such that
  $S_-|_{U\times 0}$ extends the section $s_-$, $S_+|_{\p U\times
    0}=s_+$ and $S_-(U\times 0)\subset V$.  Applying Theorem
  \ref{thm:Harer2} with $\Sigma_1 = D^2$, $\Sigma_2 = \Sigma_1 \amalg D^2$,
  and $j=S_+ \amalg S_-$, we construct a basis  for a membrane
  eliminating surgery  which removes the fold $C$.  In the case M2 the
  enrichment structure for the membranes adjacent to $C$ provides us
  with an extension of sections $s_\pm$ and $s_+$ to disjoint
  fiberwise embeddings $S_\pm:\p U\times D^2\to M$ such that
  $S_\pm|_{\p U'\times 0}=s_\pm$. To conclude the proof in this case
  we apply \ref{thm:Harer2} with $\Sigma_1 = \varnothing$, $\Sigma_2 =
  D^2 \amalg D^2$, and $j = S_+ \amalg S_-$.  Suppose now that the
  index of $C$ is $2$. Consider first the case M2. Then the vanishing
  cycles over $\p U$ define a fiberwise embedding $\p U\times S^1\to
  M$ over $\p U$ which extends to a fiberwise embedding $j:\p U\times
  A\to M$ disjoint from all folds and their membranes, where $A$ is
  the annulus $S^1\times [-1,1]$.  It follows from the definition of
  the category $ {\Fold}^{\$}_h$ that the complement of the image of
  $j$ is fiberwise connected even after all singularities being
  removed.  Hence we can apply Theorem \ref{thm:Harer2} with $\Sigma_1
  = \varnothing$ and $\Sigma_2 = A$ to construct a basis for a membrane
  expanding surgery  eliminating the fold $C$.  Finally, in the case $M1$ each
  vanishing cycle $j(x\times(S^1\times 0)), x\in\p U$, has a unique
  point $p_x$ which is also in the membrane $V$ of the fold $C$.  This
  point is the center of an embedded disk $D^2 \to A$, and the framed
  membrane gives a fiberwise embedding $j_1: U \times D^2 \to M$ over
  $U$, which over the boundary extends to a fiberwise embedding $j_2:
  \partial U \times A \to M$ over $\partial U$.  Hence, we are  in a position to  apply
  Theorem~\ref{thm:Harer2} with $\Sigma_1 = D^2$ and $\Sigma_2 = A$.
\end{proof}

 \subsection{Nodal surfaces and their unfolding}\label{sec:nodal}
 Consider a  quadratic form
\begin{equation}\label{eq:node}
Q(x)= x_1^2+x_2^2-x_3^2,\;\;  
\end{equation}
Take the handle $H=\{|Q|\leq 1; |x_3|\leq 2\} $ and denote by $K_t$ the
level set $\{Q=t\}\cap H$, $t\in[-1,1]$.  When passing through the
critical value $0$, the level set $K_t$ experiences a surgery of index
$1$, i.e. changes from a $2$-sheeted to a $1$-sheeted hyperboloid. The
critical level set $K_0$ is the cone $\{x_1^2+x_2^2-x_3^2=0, |x_3|\leq 2\}$.
Let us fix diffeomorphisms $\beta_\pm:\p_\pm H=H\cap\{x_3=\pm 2\}\to
S^1\times[-1,1]$ which send boundary circles of $K_t$ to $S^1\times
t$, $t\in I=[-1,1]$. We will write
$\beta_{\pm}(x)=(\beta^S_\pm(x),\beta^I_\pm(x))\in S^1\times[-1,1]$
for $x\in\p_\pm H$.

We will call the singularity of $K_0$ a {\it node} and call surfaces
with such singularities {\it nodal}.  A singular surface $S$ is called
{\it $k$-nodal} if it is a smooth surface in the complement of $k$
points $p_1,\dots,p_k\in S$, while each of these points has a
neighborhood diffeomorphic to $K_0$.
 
\begin{figure}[hi]
%\centerline{\psfig{figure=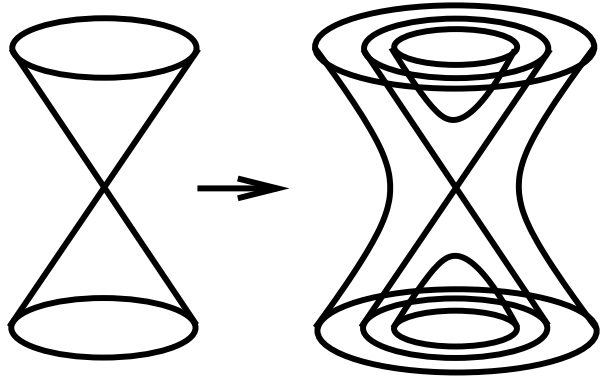,height=50mm}}
\centerline{\includegraphics[height=50mm]{mw10.eps}}
\caption{Nodal surface and its unfolding}
\label{mw10}
\end{figure}

The function $Q$ is a folded map $H\to\R$ with the origin $0\in H$ as its fold $\Sigma$.
 If we want to add to it the structure of an {\it enriched} folded map, there should be considered   four local possibilities for the choice of an enriched  membrane   depending on the membrane index and the sign of $\Sigma $
 as the membrane boundary component:
 \begin{enumerate}[(i)]
\item {\sl Index $0$ membrane with negative boundary},
$V_0^-=\{x_1,x_2=0, x_3\leq 0\}$, $K_-=\{0\}, K_+=\Span(\frac\p{\p x_1},\frac{\p}{\p x_2})$;
\item {\sl Index $1$ membrane with positive boundary},
$V_1^+=\{x_2,x_3=0, x_1\geq 0\}$, $K_-=\Span(\frac\p{\p x_3}), K_+=\Span(\frac\p{\p x_2})$;
\item {\sl Index $1$ membrane with negative boundary}.
$V_1^--=\{x_2,x_3=0, x_1\leq 0\}$, $K_-=\Span(\frac\p{\p x_2}), K_+=\Span(\frac\p{\p x_3})$
\item {\sl Index $2$ membrane with positive boundary},
$V_2^+=\{x_1,x_2=0, x_3\geq 0\}$, $K_+=\{0\}, K_-=\Span(\frac\p{\p x_1},\frac{\p}{\p x_2})$.

 \end{enumerate}
  In  the  cases (i)-(ii) the fold has index 1  and in the cases (iii)-(iv) index 2 with respect to the outward orientation of the boundary of the projection $\oV$  of $V$.
 \medskip
 
A \emph{$k$-nodal fibration} $f:Y\to Z$ is a fiber bundle whose fibers
are $k$-nodal surfaces, equipped with $k$ disjoint fiberwise
embeddings $\psi_i: Z \times K_0 \to Y$ over $Y$, such that the
complement of the images of the $\psi_i$ forms a smooth fiber bundle
over $Z$.

Let $f:Y\to Z$ be a $k$-nodal fibration. Denote $\wh Z:=Z\times I^k$
and construct a manifold $\wh Y$ together with a map $\wh f:\wh Y\to
\wh Z$ as follows.  Set
$$\wh Y= \left(Y\setminus\bigcup\limits_1^k\psi_i( Z\times
 K_0)\right)\times I^k\mathop{\cup} \limits_{\sigma_1} (Z\times H\times
I^{k-1})\mathop{\cup}\limits_{\sigma_2}
\dots\mathop{\cup}\limits_{\sigma_k}(Z\times H\times I^{k-1}),\;$$
where $\sigma _i:Z\times(\p_+H\cup\p_-H)\times I^{k-1}\to \psi_i(Z\times\p K_0)\times
I^k$, $i=1,\dots, k$, are gluing diffeomorphisms defined by the
formula 
$$\sigma_i(z,x,t_1,\dots, t_{k-1})=\psi_i(z,\beta^S_\pm(x), t_1,\dots,
t_{i-1},\beta^I_\pm(x),t_i,\dots, t_{k-1})$$ for $x\in\p_\pm H$,
$z\in Z$ and $t_j\in I$ for $j=1,\dots,k-1$.  The map $f:Y\to X$
extends to a map $\wh f:\wh Y\to \wh Z$ as equal to the projection
$(y,t)\mapsto(f(y),t)\in \wh Z=Z\times I^k$ for $(y,t)\in
\left(Y\setminus\bigcup\limits_1^k\psi_i( K_0\times Z)\right)\times
I^k$ and equal to the map $$(z,x,t_1,\dots,
t_{k-1})\mapsto(z,t_1,\dots,t_{i-1},Q(x),t_i,\dots, t_k)$$ on the
$i$-th copy of $Z\times H\times I^{k-1}$ glued with the attaching map
$\sigma_i$.  Note that the map $\wh f$ has $k$ fold components which
are mapped to the hypersurfaces $C_i=\{t_i=0\}\subset\wh Z=Z\times
I^k$.  Thus $Z=Z\times 0=\bigcap\limits_1^k \oC_i$ is the locus of
$k$-multiple intersection of images of fold components of $\wh f$.  We
will call the folded map $\wh f:\wh Y\to \wh Z$ the {\it universal unfolding} of the
$k$-nodal fibration $f:Y\to Z$. 
%%%%%%%%%%%%%%%%%%
\medskip

The following lemma, which   follows from the local description of an enriched  folded map   in a neighborhood of its fold, see Lemma
\ref{lemma-replacing-eq:14},
 shows that the universal unfolding describes the structure of a folded map over a neighborhood of the locus of maximal multiplicity of fold intersection.
 
%%%%%%%%%%%%%%%%%% 
 \begin{lemma}\label{lm:k-multiple} 
Let $ f:M\to X $ be a folded map with cooriented hyperbolic folds.   Suppose that all combinations of (projections of) fold components
intersect transversally among themselves and with $\p X$.  Let $k$ be
the maximal multiplicity of the fold intersection and $Z $ be one of the components of the $k$-multiple intersection. Then
$Z\subset X$ is a submanifold with boundary $\p Z\subset\p X$, and the
restriction $f|_{Y=f^{-1}(Z)}:Y\to Z$ is a $k$-nodal fibration. 
   Let $\wh f :\wh Y\to\wh Z=Z\times I^k$ be the universal unfolding of
  the $k$-nodal fibration $f|_Z$. Then there exist embeddings $\phi
  :\wh Z\to X$ and $\Phi :\wh Y\to M$ which extend the inclusions
  $Z\hookrightarrow X$ and $Y\hookrightarrow M$ such that the diagram
  \begin{equation}
    \xymatrix{\wh Y\ar[r]^{\Phi}\ar[d]^{\wh f }&M\ar[d]^f\\
      \wh Z  \ar[r]^{\phi} &X}
  \end{equation}
  commutes.
  If the folded map is enriched then, depending on indices of the membranes adjacent to the intersecting folds, and signs of the folds as boundary components of the membranes, one can arrange that the pre-images of the membranes and their framings under the embedding  
$\Phi:\wh Y\to M$ coincide with the submanifolds
$Z\times V_j^\pm\times I^{k-1}$, $j=0,1,2$, and their defined above  model framings in the corresponding copies of $Z\times H\times I^{k-1}$ in $\wh Y$.
\end{lemma}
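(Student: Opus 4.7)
The plan is to reduce the lemma to a combination of the single-fold normal form \eqref{eq:fold} (or its enriched enhancement Lemma~\ref{lemma-replacing-eq:14}) and a transversality/tubular-neighborhood argument that glues these individual normal forms into the product model describing the universal unfolding. First, I would observe that the first statement — that $Z$ is a submanifold with $\partial Z\subset\partial X$ and that $f|_Y:Y\to Z$ is a $k$-nodal fibration — is essentially a restatement of transversality together with the standard fold normal form: each singular fiber point $p\in Y$ lies on exactly one fold component $C_i$, and near $p$ the map $f$ looks like $(y,x)\mapsto (y,Q(x))$ with $Q$ of hyperbolic type (so $Q(x)=x_1^2+x_2^2-x_3^2$ up to sign after a linear change). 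This exhibits a neighborhood of $p$ in the restriction as the model cone $K_0$, and the transversality of the fold images with $Z$ guarantees that all such cones appear over a common smooth base $Z$.

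Next, to build the embedding $\phi:\hat Z=Z\times I^k\to X$, I would use transversality of $\bar C_1,\dots,\bar C_k$ to produce, via the implicit function theorem, a tubular neighborhood of $Z$ inside $X$ of the form $Z\times I^k$ in which the $i$-th fold image is the coordinate hyperplane $\{t_i=0\}$ and the outward coorientation of $\bar C_i$ agrees with $\partial/\partial t_i$. The boundary condition $\partial Z\subset\partial X$ is inherited since all intersections are also transversal to $\partial X$.

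The construction of $\Phi$ is the step that requires some care. Over the complement of the $k$-nodal locus in $Y$, the map $f$ is a submersion, so Ehresmann's theorem (applied fiberwise on the preimage of the tubular neighborhood, after removing suitable neighborhoods of the $\psi_i(Z\times K_0)$) identifies this part of $f^{-1}(Z\times I^k)$ with $(Y\setminus\bigcup\psi_i(Z\times K_0))\times I^k$. Near each nodal embedding $\psi_i$, I would apply the enriched fold normal form from Lemma~\ref{lemma-replacing-eq:14} parametrically in the other $(k-1)$ transversal directions: since those directions lie in the smooth part of $f$ near that node, the family of one-fold normal forms can be chosen to depend smoothly on $(z,t_1,\dots,\hat t_i,\dots,t_k)$, producing an embedding $Z\times H\times I^{k-1}\to M$ over $\phi$ restricted to the corresponding product slab. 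The gluing diffeomorphisms $\sigma_i$ are then precisely the compatibility conditions between the smooth part and the handle charts expressed in the fixed parametrizations $\beta_\pm$.

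The main obstacle is ensuring the simultaneous normal form is globally consistent: a priori each fold's normal form is unique only up to certain discrete/continuous choices, and one must verify that the charts near different folds (and away from the folds) agree on overlaps. Transversality is the crucial input here, as it guarantees that the fibers of the various normal form projections are independent and that the handle coordinates for the $i$-th fold can be chosen orthogonally to those of the $j$-th fold for $j\neq i$; this lets the $k$ single-fold normal forms be combined into the product model defining $\hat f$. For the enriched case, the same argument runs with Lemma~\ref{lemma-replacing-eq:14}'s stronger normal form: the four listed choices $V_j^\pm$ correspond exactly to the four possibilities (index and sign of the boundary component) permitted by the framed-membrane definition, and selecting the chart adapted to the local membrane data at each fold yields the claimed identification with $Z\times V_j^\pm\times I^{k-1}$ together with the model framings.
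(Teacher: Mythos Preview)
Your approach is correct and matches the paper's: the authors do not give a detailed proof but simply remark that the lemma ``follows from the local description of an enriched folded map in a neighborhood of its fold, see Lemma~\ref{lemma-replacing-eq:14}.'' Your proposal is a faithful unpacking of exactly that sentence---using the single-fold normal form at each fold together with transversality to assemble the product model---so there is nothing to correct or contrast.
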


\medskip

Notice that Theorem \ref{thm:Harer-geometric} is easily generalized to $k$-nodal
fibrations as follows.

\begin{theorem}[Geometric form of Harer stability for nodal fibrations]
  \label{thm:Harer-geometric-nodal}
  Let $\Sigma_1 \subset \Sigma_2$ be compact surfaces with boundary
  (not necessarily connected).  Let $f: M \to X$ be a $k$-nodal
  fibration with $T_\infty$ ends, and let
  \begin{align*}
    j: (\partial X \times \Sigma_2) \cup (X \times \Sigma_1) \to M
  \end{align*}
  be a fiberwise embedding over $X$, such that its image in each fiber
  is disjoint from the nodes, and that in each fiber the complement of
  its image is connected, even after removing all nodes.

  Then, after possibly changing $f: M \to X$ by a bordism which is the
  trivial bordism over $\partial X$, the embedding $j$ extends to an
  embedding of $X \times \Sigma_2$, still disjoint from nodes and with
  connected complement.
\end{theorem}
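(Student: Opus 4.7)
The strategy is to reduce Theorem~\ref{thm:Harer-geometric-nodal} to the smooth Harer stability of Theorem~\ref{thm:Harer-geometric} by passing to the fiberwise resolution of the nodes. Let $\pi: \tilde M \to M$ denote the resolution, constructed by replacing each $\psi_i(X \times K_0)$ (two disks meeting at a node, fiberwise over $X$) by $X \times (D^2 \amalg D^2)$. This yields a smooth fibration $\tilde f = f \circ \pi : \tilde M \to X$ with $T_\infty$ ends, together with $2k$ distinguished sections $s_i^\pm: X \to \tilde M$ ($i = 1, \dots, k$) such that $M$ is recovered as the quotient $\tilde M / (s_i^+(z) \sim s_i^-(z))$.

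Since the image of $j$ is disjoint from the nodes, $j$ lifts uniquely to a fiberwise embedding $\tilde j : (\partial X \times \Sigma_2) \cup (X \times \Sigma_1) \to \tilde M$, and the hypothesis that the complement of $j$ is fiberwise connected after removing the nodes translates directly to the complement of $\tilde j$ being fiberwise connected in $\tilde M$. Choose disjoint fiberwise disk embeddings $\delta_i^\pm: X \times D^2 \to \tilde M$ of small neighborhoods of the sections $s_i^\pm$, all disjoint from the image of $\tilde j$. Form the enlarged surfaces
\[
  \Sigma_1' = \Sigma_1 \amalg \coprod_{i,\pm} D^2, \qquad \Sigma_2' = \Sigma_2 \amalg \coprod_{i,\pm} D^2,
\]
with the obvious inclusion $\Sigma_1' \hookrightarrow \Sigma_2'$, and extend $\tilde j$ to a fiberwise embedding $j' : (\partial X \times \Sigma_2') \cup (X \times \Sigma_1') \to \tilde M$ using the $\delta_i^\pm$. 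Adjoining small interior disks to the image does not disconnect the fiberwise complement, so the hypotheses of Theorem~\ref{thm:Harer-geometric} are satisfied for $\tilde f$ and $j'$.

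Applying smooth Harer stability yields a bordism $\tilde F : \tilde W \to Y$ of smooth fibrations (trivial over $\partial X$) together with a fiberwise extension $\tilde J : (Y \times \Sigma_1') \cup (X' \times \Sigma_2') \to \tilde W$ of $j'$. The restriction of $\tilde J$ to the $(D^2)^{2k}$ summands provides fiberwise disk embeddings $\tilde\delta_i^\pm : Y \times D^2 \to \tilde W$ extending the $\delta_i^\pm$; let $t_i^\pm : Y \to \tilde W$ be their center sections. Form the quotient $W = \tilde W / (t_i^+(y) \sim t_i^-(y))_{y \in Y,\, i = 1, \dots, k}$; the induced map $F : W \to Y$ is then a $k$-nodal fibration (with local charts provided by the $\tilde\delta_i^\pm$) which agrees with $f$ over $X$. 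The restriction of $\tilde J$ to the $\Sigma_1, \Sigma_2$ summands is disjoint from the $\tilde\delta_i^\pm$, so it descends to the required extension $J : (Y \times \Sigma_1) \cup (X' \times \Sigma_2) \to W$, automatically disjoint from nodes and with fiberwise connected complement.

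The main subtlety is ensuring that $\tilde M$ has a genuine $T_\infty$-end fibration structure after resolution. Since the nodes are located away from the $T_\infty$ end, the end itself is preserved; however, in principle a ``separating'' node could split off a compact component in some fibers. Such compact components can be handled independently --- they satisfy an easier, closed-fiber form of Harer stability applied componentwise --- and do not interfere with the argument on the main component carrying the $T_\infty$ end. This bookkeeping aside, the conceptual content of the proof is the resolution--quotient argument above.
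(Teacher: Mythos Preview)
Your resolution--quotient argument is correct and supplies exactly the details that the paper omits: the paper states Theorem~\ref{thm:Harer-geometric-nodal} without proof, merely remarking that Theorem~\ref{thm:Harer-geometric} ``is easily generalized to $k$-nodal fibrations.''  Your device of resolving the nodes, carrying the resulting $2k$ sections along as extra disk summands of both $\Sigma_1'$ and $\Sigma_2'$, applying smooth Harer stability, and then re-identifying the section pairs is the natural way to realize that generalization, and it works as written.

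One remark on your final paragraph: the worry about separating nodes is unnecessary.  The hypothesis that the complement of $j$ in each fiber is connected \emph{after removing all nodes} is precisely the statement that the complement of $\tilde j$ in the resolved fiber is connected (the $2k$ isolated points $s_i^\pm(x)$ lie in this complement and do not affect its connectedness, being interior points of a surface).  Since every component of $\Sigma_1$ and $\Sigma_2$ in the paper's applications has nonempty boundary, each component of $\mathrm{im}\,\tilde j$ meets the connected complement along its boundary, and so the resolved fiber $\tilde f^{-1}(x)$ is itself connected and carries the $T_\infty$ end.  Thus Theorem~\ref{thm:Harer-geometric} applies directly, with no side argument for split-off compact pieces required.
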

   
\subsection{Proof of Theorem \ref{thm:Harer2}}\label{sec:proof-Harer}

Let $(f:M\to X,\fe)$ be an enriched folded map from $\Fold^{\$}_h$.
Let $U\subset X$ be a compact domain with smooth boundary. We assume
that all combinations of (projections of) fold components intersect
transversally among themselves and with $\p U$.  Let $k$ be the
maximal multiplicity of the fold intersection. We denote by $U_j$,
$j=1,\dots, k$, the set of intersection points of multiplicity $\geq
j$ in $U$, and set $U_0:=U$.  Thus we get a stratification
$U=\bigcup\limits_0^k U_j\setminus U_{j+1}$, and we have $ U_j
=\bigcup\limits_{i\geq j} U_i$. Set $M_j=f^{-1}(U_j)$.  Note that
$U_k$ is a closed submanifold of $U$ with boundary $\p U_k\subset\p U$.  The
map $f_k=f|_{M_k}:M_k\to U_k$ is a $k$-nodal fibration. The membranes
which are not adjacent to the fold components intersecting along
$U_k$, together with their framings define fiberwise embeddings $
s_1,\dots, s_l:U_k\times D^2\to M_k$ over $U_k$, disjoint from the
image of $j$ and from each other.

 %%%%%%%%%%
Let us apply Theorem \ref{thm:Harer-geometric-nodal}, the nodal
version of Harer's stability theorem, to the nodal fibration $ f_k $
and the fiberwise embedding
\begin{align*}
  \wt j=j\sqcup\bigcup\limits_1^l s_i: (\p
  U_k\times\wt\Sigma_2)\cup(U_k\times \wt\Sigma_1)\to M_k,
\end{align*}
where $\wt\Sigma_1$ and $\wt\Sigma_2$  are  disjoint unions of $\Sigma_1$ and $\Sigma_2$, respectively, with $l$ copies
of the disc $D^2$.   As a result, we
find a bordism $ F_k: W_k\to Y_k$ (in the class of $k$-nodal
fibrations) between the $k$-nodal fibrations $ f_k : M_k \to U_k$ and
$ f'_k: M'_k\to U'_k$ which is constant over $\Op\p U_k$ and such that
the fiberwise embedding $\wt j$, extends to a fiberwise embedding
$$ J:\left((\p' Y_k= \p
  U_k\times[0,1])\cup(\p_+Y_k= U_k')\right)\times
\wt\Sigma_2 \to W_k.
$$
     
Let $\wh F_k:\wh W_k\to\wh Y_k = Y_k \times I^k$ be the universal
unfolding of the $k$-nodal fibration $ F_k:\ W_k\to Y_k$.  We view
$\wh F_k$ as a bordism between the universal unfoldings $\wh f_k:\wh
M_k\to\wh U_k = U_k \times I^k$ and $\wh f'_k:\wh M_k\to\wh U'_k=
U'_k\times I^k$ of the $k$-nodal fibrations $f_k$ and $f_k'$. The
embeddings $J $ extends to a fiberwise embedding
\begin{align*}
  \wh J:\left((\wh{\p' Y}_k=\p' Y_k\times I^k)\cup \wh U_k'
  \right)\times \wt\Sigma_2 \to \wh W_k.
\end{align*} 
over $\wh Y_k$.  According to Lemma \ref{lm:k-multiple} the
restriction of $f$ to a tubular neighborhood of $U_k$ is isomorphic to
the universal unfolding $\wh f_k:\wh M_k\to\wh U_k$ of the $k$-nodal
fibration $f_k$. In other words, there exist embeddings $\phi_k:\wh
U_k\to X$ and $\Phi_k:\wh M_k\to M$ which extend the inclusions
$U_k\hookrightarrow X$ and $M_k\hookrightarrow M$ such that the
diagram
\begin{equation}
  \xymatrix{\wh M_k\ar[r]^{\Phi_k}\ar[d]^{\wh f_k}&M\ar[d]^{f}\\
    \wh X_k \ar[r]^{\phi_k} &X}
\end{equation}
commutes.
 Moreover, $(\phi_k,\Phi_k)$ can be chosen in such a way that the framed membranes
 adjacent to intersecting folds correspond to model framed membranes of Lemma  \ref{lm:k-multiple}.
 
Let us glue the bordism $\wh F_k:\wh W_k\to\wh Y_k$ to the trivial
bordism $ F=f\times\Id:W=M\times I\to Y=X\times I$ using the attaching
maps $(\phi_k, \Phi_k)$:
\begin{align*}
  \wt F: \wh W_k\mathop{\cup}\limits_{\Phi_k\times 1} M\times I\to \wh
  Y_k\mathop{\cup}\limits_{\phi_k\times 1} X\times I
\end{align*}
and then smooth the corners.  The folded map $\wt F:\wt W\to\wt Y$
resulting from this construction is a bordism between $f:M\to X$ and
$f':M'\to X'$, which is trivial over the complement of $\phi_k(\wh
U_k)\subset X$. Model framed membranes  of Lemma  \ref{lm:k-multiple}
provide  us with a canonical extension to $\wh W_k$ of framed  membranes
adjacent to $Y_k$.  On the other hand,  the restriction of $\wh J$ to $\wh Y_k \times
\coprod^l D^2 \subset \wh Y_k \times \Sigma_2$ allows    us  to extend  
all the other  framed membranes.
 Thus the constructed map $\wt F:\wt W\to\wt Y$ together with the
extended enrichment is a bordism in the category $\Fold^{\$}_h$.  The
fiberwise embedding $ \wh J :U_k'\times\Sigma_2\to M_k'$ extends to a
closed neighborhood $\Omega\supset U_k'$ in $M'$.  Consider the domain
$U''=U'\setminus\Int \Omega$. The maximal multiplicity of fold
intersection over $U''$ is equal to $k-1$. Hence, we can repeat the
previous argument to extend $\wh J$ over the stratum $U''_{k-1}$,
possibly after changing it by another bordism in the category
$\Fold^{\$}_h$. Continuing inductively we find the required extension
to the whole domain bounded by $\p U$. \qed
  
\section{Proof of Theorem \ref{thm:main1}}\label{sec:main-proof}
As it was already mentioned above, Theorem \ref{thm:main1} follows from
Theorem \ref{thm:epi-folds} and Propositions
\ref{prop:enriched-hyperbolic} and \ref{prop:hyperbolic-fibrations}.
We prove these propositions in the next two sections.

\subsection{Proof of Proposition \ref{prop:enriched-hyperbolic}}
\label{sec:to-hyperbolic}
 
Let $(f,\fe)$ be an enriched folded map with $f: M \to X$.  First, we
get rid of non-hyperbolic folds. Let $Z$ be a non-hyperbolic fold
component.   The following procedure, which is illustrated in Fig. \ref{mw11}, replaces $Z$ by a parallel hyperbolic fold.

Let  $V$ be the membrane of $Z$, and $N=\oZ\times [-2,0] \subset X$ be an interior collar of
$\oZ=\oZ\times 0$ in $X\setminus\Int \oV$.  Let us recall that the map $f$ has a standard
end, where it is equivalent to the trivial fibration $\Tinf\times X\to
X$.  Let $\overline A= \oZ\times[-2,-1]\subset N$ and $\overline B= \oZ\times[-2,-1]\subset N$, so that $N=\overline A\cup\overline B$.
Let us  lift $\overline A$ to   an annulus   $A=z\times A\subset M$, $z\in
\Tinf$.  Write $\overline Z_1:=Z\times (-1),$ $\overline
Z_2:=Z\times(-2 )$, $Z_1:=z\times \overline Z_1$,
$Z_2:=z\times\overline Z_2$, so that $\p\overline A=\overline
Z_1\cup\overline Z_2$ and $\p A=Z_1\cup Z_2$.  Using Lemma
\ref{lm:2fold-creation} we can create a double fold with the membrane
$A$ and folds $Z_2$ of index $1$ and $Z_1$ of index $0$ with respect
to the coorientation of $\overline Z_1,\overline Z_2$ by the second
coordinate of the splitting $N=Z\times [-2,0]$. The  folds $Z_1$ and $Z$ have index
$0$ with respect to the outward coorientation of $\p \overline B$, and hence we
can use a membrane expanding surgery to kill both folds, $Z$ and
$Z_1$, and spread their membranes over $\overline B$.  As a result of this procedure we have  replaced $Z$ by a hyperbolic fold $Z_2$.

\begin{figure}[hi]
%\centerline{\psfig{figure=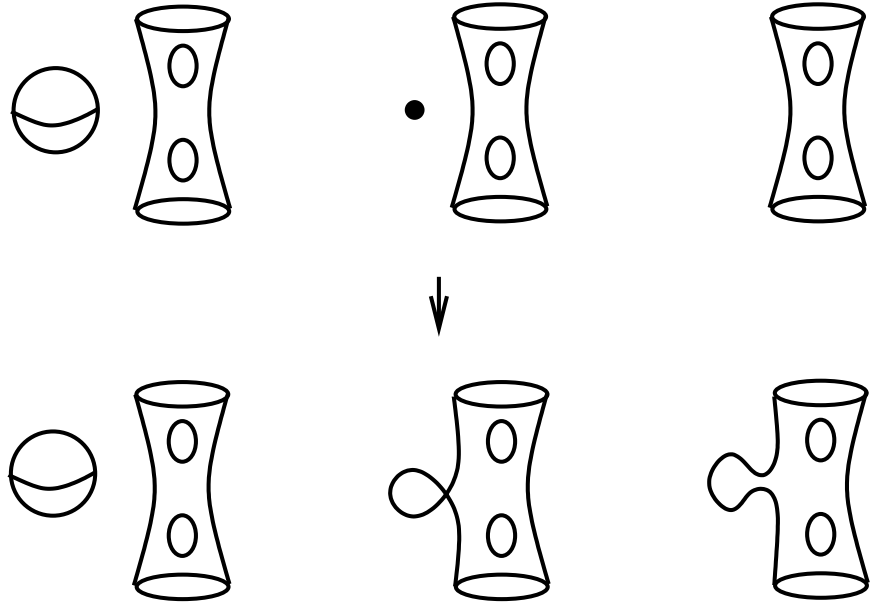,height=80mm}}
\centerline{\includegraphics[height=80mm]{mw11.eps}}
\caption{Replacing an elliptic fold by a hyperbolic one}
\label{mw11}
\end{figure}

\medskip It remains to make the fibers of $f|_{M\setminus\Sigma(f)}$
connected.  We begin with the following lemma from \cite{MW07}.  \mn
\begin{lemma}\label{lm:discs}
  Let $M\to X$ be an enriched folded map without folds of index $0$.
  Then there exist disjoint $n$-discs $D_i$, $i=1,\dots, K$, embedded
  into $M$, such that
  \begin{itemize}
  \item $f|_{D_i}$ is embedding $D_i\to \Int X$ for each $i=1,\dots
    K$;
  \item $V\cap\bigcup\limits_1^K D_i =\varnothing$, where $V\subset M$
    is the union of all membranes;
  \item for each $x\in X$ each irreducible component of $\pi^{-1}(x)$
    intersects at least one of the discs $D_i$ at an interior point.
  \end{itemize}
\end{lemma}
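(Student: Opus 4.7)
The plan is to produce the discs $D_i$ in two batches: one batch using the $T_\infty$ end that catches the irreducible component of every fiber containing that end, and a second batch coming from local sections of $f$ over strata of $X$ that catches the remaining irreducible components. After assembling both batches, I would use general position to make the whole collection pairwise disjoint and disjoint from the membrane $V$.

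First I would use the standard-end trivialization $j:X\times T_\infty\leadsto M$. Since the union of all membranes is compact in $M$, by choosing a finite cover of $X$ by small open $n$-balls $B_\alpha\subset\Int X$ and distinct points $p_\alpha\in T_\infty$ far enough in the end, the images $j(B_\alpha\times\{p_\alpha\})$ form pairwise disjoint $n$-discs which map as embeddings to $\Int X$ and automatically miss $V$. Each of these discs hits, at an interior point, the unique irreducible component of every fiber over $B_\alpha$ that reaches the $T_\infty$ end.

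Next I would handle the remaining irreducible components. Consider $f|_{M^\circ}:M^\circ\to X^\circ$, where $X^\circ=X\setminus f(\Sigma(f))$ and $M^\circ=f^{-1}(X^\circ)$; this is a submersion, locally a fibre bundle, so $M^\circ$ decomposes into finitely many ``layers'' (connected components), each fibred over a connected open subset of $X$. For each layer that is not already met by a disc from Step 1, I would cover its base by finitely many small open $n$-discs, pick local sections, and thicken into $n$-discs in $M$. Since $V$ has the same dimension as $X$ but the layer in question has total space of dimension $n+2$, a generic choice of sections misses $V$ and the previously chosen discs; only finitely many such discs are needed by compactness of $X$ and finiteness of the fold stratification.

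Finally I would verify that every irreducible component of every (possibly nodal) fiber is met. Over a single fold image of index $k$ the nodal fiber has a normalization that matches bijectively with fibre components on the side of the fold with more components (two local discs for $k=1$, two local discs for $k=2$, versus one annulus on the other side); so each such irreducible component is the continuous limit of a fiber component over a nearby point of $X^\circ$, already covered in Step 2. The no index $0$ hypothesis is exactly what rules out the pathological local model in which a sphere component appears at the fold with no counterpart on the opposite side. For deeper strata, I would use the universal unfolding of Lemma \ref{lm:k-multiple} to reduce the matching to an iterated application of the single-fold case. The main obstacle will be the bookkeeping of this last step: verifying that, across all strata of the multiplicity-stratified image $f(\Sigma(f))$, the ``hit'' relation propagates from smooth fibers to the normalization of each nodal fiber simultaneously and compatibly, with the no-index-$0$ condition the only ingredient preventing the argument from failing.
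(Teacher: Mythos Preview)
Your approach has a genuine gap at the disjointness step. You propose to make the collection of $n$-discs pairwise disjoint by general position, but the dimension count does not cooperate: two $n$-discs in the $(n+2)$-manifold $M$ generically intersect in a set of dimension $2n-(n+2)=n-2$, which is nonempty as soon as $n\geq 2$. The situation cannot be improved by choosing the discs more cleverly as sections: over a single point $x\in X$ whose fiber has several irreducible components you must produce several discs whose images in $X$ all contain $x$, and two local sections of a bundle with $2$-dimensional fiber over overlapping $n$-balls again meet generically along an $(n-2)$-dimensional set. So ``general position'' does not separate the discs, and your Steps~1--2 produce a collection that you cannot subsequently disjoin.

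The paper circumvents exactly this obstruction with a trick that is absent from your outline: fix a generic function $h:M\to\bbR$ and force each disc $D_i$ to lie on its own level hypersurface $h^{-1}(c_i)$, with the $c_i$ distinct. Disjointness is then automatic, and the only price is that near each $x$ the disc must avoid the (at most $1$-dimensional) locus in $F_x$ where $h|_{F_x}$ is critical; Thom transversality makes this locus small enough that the remaining part of each irreducible component is still open and dense. Your use of the $T_\infty$ end and your stratum-by-stratum bookkeeping for nodal fibers are reasonable but unnecessary once this device is in place: the paper simply observes the statement is obvious for a fixed $x$, gets finitely many (possibly intersecting) discs by compactness, and then invokes the level-set trick to achieve disjointness.
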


\begin{proof}
  Note first, that the statement is evident for any {\it fixed\, }
  $x\in X$. Hence, without controlling the disjointness of the disks
  $D_i$ the statement just follows from the compactness of $X$. One
  can choose the required {\it disjoint} disks $D_i$ using the
  following trick: fix a function $h:M\to \bbR$ and take the disks
  $D_i$ such that each disk belongs to its own level hypersurface of
  $h$.  When we choose such disks for $x\in X$ one needs to avoid the
  points $z\in F_x=f^{-1}(x)$ where the level hypersurface is tangent
  to the fiber $F_x$.  It can be done by a small perturbation of
  disks, if the complement of all ``bad'' points (for all $x\in X$) is
  {\it open} and {\it dense} in $F_x$ for all $x\in X$.  But Thom's   jet
  transversality theorem asserts that this is a generic situation for  functions
  $h:M\to \bbR$.
\end{proof}

Let the disks $D_i \subset M$ be as in Lemma \ref{lm:discs}.  Let us also
consider discs $\Delta_i=\overline D_i\times y_i$, $i=1,\dots, K$,
where $y_1,\dots y_K$ are disjoint points at the end $\oTinf$ of the
fiber $F$. Next, using each pair $(D_i,\Delta_i)$, $i=1,\dots, K$, as
a basis for an index 1 fold creating surgery we create new folds
$\Sigma_i=\p\wt D_i$ of index $1$ with the discs $\overline D_i$
serving as membranes, while making all the fibers connected, see Remark \ref{rem: fold-creating}.

\medskip

As a result of this step we arrange all fibers $F_x=
f^{-1}(x)\setminus\Sigma(f), x\in X$, to be connected.  This completes
the proof of Proposition \ref{prop:enriched-hyperbolic}.

\qed

\subsection{Proof of Proposition \ref{prop:hyperbolic-fibrations}}
\label{sec:to-fibrations}

Let $V_1, \dots, V_N$ be the collection of (connected) framed membranes of
$\ff=(f,\fe)$. We are going to inductively remove all of them.  If the
membrane $V_1$ is pure then, in view of Corollary \ref{cor:basis-existence},
we can assume, after a possible change of $f$ by a bordism in the
category $\Fold^{\$}_h$, that there is a basis for the fold surgery
which removes $\p V_1$ together with the membrane.
  
Suppose now that the membrane $V_1$ is mixed.  Consider first the case
$n>1$. By our assumption  each boundary component  $\C$ of $\oV_1$ bounds in this case a domain $U_C\subset X$. If $\p X\neq \varnothing$ then the domain $U_C$ is uniquely defined. If $X$ is closed then we fix a point $p\in X\setminus \oV_1$ and denote by $U_C$ the domain which does not contain $p$.
There exists exactly one  boundary component  $C$   of $V_1$ such that $\oV_1\subset U_C$.  For any other boundary component $C'$ of $V_1$ we have $U_{C'}\subset X\setminus\Int\oV_1$. We will refer to $C$ as  the exterior fold of $V_1$, and to all other boundary folds of $V_1$ as its interior folds.
First, we apply 
  Corollary \ref{cor:basis-existence} in
order to create a basis for a surgery which eliminates each interior fold $C'$ and spreads
the membrane $V_1$ over $U_{C'}$.  After applying this procedure to all interior folds 
 of $V_1$   we
will come to the situation when $\p V_1=C$ is connected, and hence the
membrane $V_1$ is pure, which is the case we already considered
above. Applying the same procedures subsequently to the membranes
$V_2,\dots, V_N$ we complete the proof of Proposition
\ref{prop:hyperbolic-fibrations} when $n>1$.

\medskip
Finally consider the case $n=1$, i.e. $X=I$ or $X=S^1$.
We assume for determinacy that $X=I$, i.e. $f$ is a Morse function. A mixed framed membrane    of $f$ connects two critical points $p_1,p_2$ of
$f$ of index $1$ and $2$, and with critical values $c_1,c_2, c_1<c_2$,
respectively. For a small $\eps>0$ let us consider vanishing circles
$S_1\subset F_{c_1+\epsilon}, S_2\subset F_{c_2-\epsilon}$ of critical
points $p_1$ and $p_2$, where we denote $F_t:=f^{-1}(t),\;
t\in\R$. Choose an oriented embedded circle $S_1'\subset
F_{c_1+\epsilon}$ which intersects $S_1$ transversally in one point. Let
$A$ denote the annulus $S^1\times [-1,1]$, and $D\subset \Int A$ be a
small 2-disc centered at a point $q\in S^1\times 0$.  Choose a
fiberwise embedding $k:I_\eps\times D\to M_\eps=f^{-1}(I_\eps)$ over
$I_\eps=[1+\eps,2-\eps]$, such that $k(I_\eps\times 0)\subset V$ and
the linearization of $k$ along $k(I_\eps\times 0)$ provides the given
framing of the membrane $V$.  Let us also choose embeddings $j_1:A\to
F_{1+\eps}$ and $j_2: A\to F_{2-\eps}$ such that $j_1|_{S^1\times
  0}=S_1'$, $j_2|_{S^1\times 0}=S_2$, and for $x\in D$ we have
$j_1(x)=k(1+\eps,x), j_2(x)=k(2-\eps,x)$. Let us apply Theorem
\ref{thm:Harer2} to the Morse function ($=$ the folded map)
$f'=f_{M_\eps}:M_\eps \to I_\eps$ with $\Sigma_2=A $, $\Sigma_1=D\subset\Sigma_1$ and  the embedding $j:(\p I_\eps\times \Sigma_2)\cup(I_\eps\times\Sigma_1)\to M$, which is equal to $ 
j_1\sqcup j_2$ on $\p I_\eps\times A$ and to $k$ on $I_\eps\times D$.
   Theorem \ref{thm:Harer2}
then allows us, after possibly changing $f$ in its bordism class in
$\Fold_h^{\$}$, to extend $j_1\sqcup j_2$ to a fiberwise over $I_\eps$
map $j:I_\eps\times A\to M$ which coincides with $k$ on $I_\eps\times
D$. Choosing a metric for which the cylinder $f(I_\eps\times
(S^1\times0))$ is foliated by gradient trajectories, one of which is
$V$, we come to the situation when we can apply the standard Morse
theory cancellation lemma, see for instance \cite{Milnor}, to kill
both critical points $p_0$ and $p_1$ together with their membrane
$V$. The cancellation deformation is inverse to the double fold
creation, and hence it can be realized by a bordism in the category
$\wt\Fold_h^{\$}$.  \qed
    
\medskip This completes the proof of Theorem \ref{thm:main1}.
\medskip

 %%%%%%%%%%%
 \section{Miscellanous}\label{sec:misc}
\subsection{Appendix A: From wrinkles to double folds}
\label{sec:appendix-a:-from}

\subsubsection{Cusps}\label{sec:cusps}
\mn
Let $n>1$. Given a map $f:M\to X$, a point $p \in \Sigma(f)$ is called a
{\it cusp} type singularity or a {\it cusp}
of index $s + \frac{1 }{ 2}$\; if near the point $p$ the map $f$ is
equivalent to the map
$$
\bbR ^{n-1}\times \bbR ^1\times \bbR ^{d}
\rightarrow \bbR^{n-1} \times \bbR ^1
$$
given by the formula
\begin{equation}\label{eq:cusp}
(y,z,x) \mapsto \left( y,z^3 + 3y_1z - \sum^s_1 x_i^2 +
\sum^{d}_{s+1} x^2_j \right)  
\end{equation}
where $x=(x_1, \ldots, x_{d}) \in \bbR ^{d},\,\, z \in \bbR ^1,\,\,
y = (y_1, \ldots, y_{n-1}) \in \bbR ^{n-1}$.

The set of cusp points is denoted by $\Sigma^{11}(f)$. It is a codimension $1$ submanifold of $\Sigma(f)$ which is in the above canonical coordinates is given by $x=(x_1,\dots,x_d)=0, y_1=z=0.$
The vector  field $\frac{\p}{\p y_1}$ along $\Sigma^{11}(f)$ is called the 
{\it characteristic vector} field of the cusp locus. 
It can be invariantly  defined   as follows.
Note that for any point $p\in\Sigma^{11}(f)$ there exists  a neighborhood $\Omega\ni f(p)$ in $X$ such that
$\Omega\cap f(\Sigma(f))$ can be presented as a union of two manifolds  $\overline\Sigma_\pm$ with the common boundary $\p\overline\Sigma_\pm=\Omega\cap f(\Sigma^{11}(f))$, the common tangent space $T=T_{f(p)}\overline\Sigma_\pm=df(T_pM)$ at the point $f(p)$, and the common outward coorientation $\nu$ of  $T'=T_p{\p\overline\Sigma_\pm}\subset T $.  On the other hand, the differential $df$ defines an isomorphism $$T_pM/(\Ker\,d_pf+T_p \Sigma(f))\to T/ T'.$$
Hence, there exists a  vector field $Y$ transversal to
 $\Ker\,df+T \Sigma(f)$ in $TM$ along $\Sigma^{11}(f)$, 
 whose projection defines the coorientation $\nu$ of  $T'$ in $T$ for all points $p\in\Sigma^{11}(f)$.  
 One can show that any vector field 
 $Y$ defined  that way coincides with the vector field
 $\frac{\p}{\p y_1}$ for some  local coordinate system in which the map $f$ has the canonical form \eqref{eq:cusp}.
 
 Note that the line bundle $\lambda$ over $\Sigma^{11}(f)$ is always trivial.
 Indeed, $\lambda$ can be equivalently defined as the kernel of the quadratic form $d^2f:\Ker\,df\to\Coker\,df$, and thus one has an invariantly defined cubic form $d^3f:\lambda\to\Coker\,df$ which does not vanish. The bundle $\Ker\,df|_{\Sigma^11(f)}$ can be split  as $\Ker_+\oplus\Ker_-\oplus\lambda$, so that  the quadratic form $d^2f$ is positive definite on $\Ker_+$ and negative definite on $\Ker_-$.
 
%%%%%%%%%%%%%
  \subsubsection{Wrinkles and wrinkled mappings}
\label{ss:wrinkles}

Consider the map
$$
w(n+d,n,s): \bbR^{n-1}  \times \bbR^1\times \bbR^{d}
\rightarrow \bbR^{n-1} \times \bbR^1
$$
given by the formula
$$
(y,z,x) \mapsto
\left( y, z^3 + 3(|y|^2-1)z - \sum^s_1 x^2_i + \sum_{s+1}^{d}
x_j^2 \right)\,,
$$
where $y \in \bbR ^{n-1},\,\, z \in \bbR^1 ,\,\, x\in \bbR ^{d}$ and
$|y|^2 = {\sum_1^{n-1}y^2_{i}}$.

\mn
The singularity
$\Sigma (w(n+d,n,s))$ is the $(n-1)$-dimensional sphere
$$
S^{n-1}=S^{n-1}\times 0\subset\bbR^n\times\bbR ^{d}.
$$
whose equator $\Sigma^{11}(f)= \{ |y|=1, z=0, x=0\} \subset \Sigma (w(n+d,n,s)) $
consists of cusp points of index $s + \frac{1 }{ 2}$\,.  The upper
hemisphere $\Sigma (w) \cap\{z>0\}$ consists of folds of index $s$,
while the lower one $\Sigma (w) \cap \{z< 0\}$ consists of folds of
index $s+1$. The radial vector field $Y=\sum\limits_1^{n-1}y_j\frac{\p}{\p y_j}$ serves as a characteristic vector field  of the cusp locus.

\begin{figure}[hi]
%\centerline{\psfig{figure=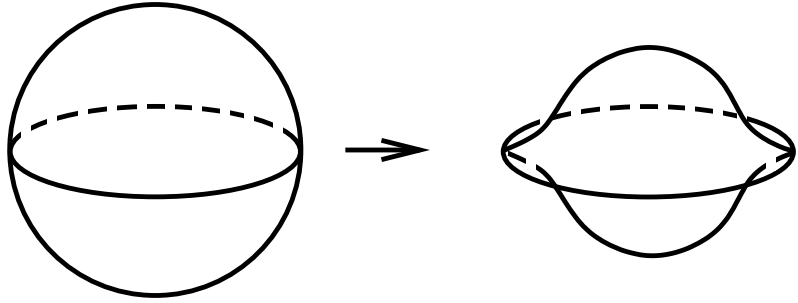,height=60mm}}
\centerline{\includegraphics[height=40mm]{mw9.eps}}
\caption{Wrinkle in the source and in the image}
\label{mw9}
\end{figure}

\mn
Although the differential
$dw(n+d,n,s):T(\bbR^{n+d}) \rightarrow T(\bbR^n)$
degenerates at points of $\Sigma(w)$, it can be canonically {\it regularized}
over $\Op_{\bbR^{n+d}}D^n$, an open neighborhood of the disk
$D^n=D^n\times 0\subset \bbR^n\times\bbR^{d}$.
Namely, we can substitute the element $3(z^2 + |y|^2 -1)$ in the Jacobi matrix of
$w(n+d,n,s)$ by a function $\gamma$ which
coincides with $3(z^2+|y|^2-1)$ on $\bbR^{n+d}\setminus\Op_{\bbR^{n+d}}D^n$ and does
not vanish along the $n$-dimensional subspace
$\{x=0\}=\bbR^{n}\times {\bf 0}\subset \bbR^{n+d}\,.$
The new bundle map
${\cal R}(dw):T (\bbR ^{n+d}) \rightarrow T(\bbR ^n)$
provides a homotopically canonical
extension of the map
$dw:T(\bbR ^{n+d}\setminus \Op_{\bbR^{n+d}}D^n) \rightarrow T(\bbR ^n)$ to
an epimorphism (fiberwise surjective bundle map)
$T(\bbR^{n+d}) \rightarrow T(\bbR ^n)$.  We call
${\cal R}(dw)$ the {\it regularized differential} of the map $w(n+d,n,s)$.

\mn
A map $f:U\rightarrow X$ defined on an open ball $U \subset M$ is called a
{\it wrinkle} of index $s +\frac {1 }{ 2}$\,\, if it is equivalent to the
restriction  $w(n+d,n,s)|_{\Op_{\bbR^{n+d}}D^n}$.
We will use the term
``wrinkle'' also for the singularity $\Sigma (f)$ of a wrinkle $f$.

\mn
Notice that for $n=1$ the wrinkle is a function with two nondegenerate
critical points of indices $s$ and $s+1$ given in a neighborhood of a gradient
trajectory which connects the two points.

\mn
A map $f:M \rightarrow X$ is called {\it wrinkled} if there exist disjoint
open subsets $U_1, \ldots, U_l \subset M$ such that the restriction
$f|_{M\setminus  U}, \,\, U=\bigcup^l_1 U_i, $ is a
submersion (i.e. has rank equal $n$) and for each $i=1, \ldots, l$ the
restriction $f|_{U_i}$ is a wrinkle.
%Notice that the sets $U_i,\,i=1,\dots,l,$
%are included into the structure of a wrinkled map.

\mn
The singular locus $\Sigma(f)$ of a wrinkled map $f$ is a union of
$(n-1)$-dimensional spheres (wrinkles) $S_i=\Sigma(f|_{U_i})
\subset U_i$.
Each $S_i$ has a $(n-2)$-dimensional equator $S'_i \subset S_i$
of cusps which divides $S_i$ into two hemispheres of folds of two neighboring
indices. The differential $df:T(M) \rightarrow T(X)$ can be regularized to
obtain an epimorphism
${\cal R} (df):T(M) \rightarrow T(X)$.  To get
${\cal R} (df)$ we regularize $df|_{U_i}$ for each wrinkle $f|_{U_i}$.

\mn
The following  Theorem \ref{thm:wrinkled}
is the main result of the paper [EM1]:

\begin{theorem}
[Wrinkled mappings]\label{thm:wrinkled}
Let $F: T(M) \rightarrow T(X)$ be an
epimorphism which covers a map $f: M\rightarrow X$.
Suppose that $f$ is a submersion on a neighborhood of a
closed subset $K \subset M$, and $F$
coincides with $df$ over that neighborhood.
Then there exists a wrinkled map $g : M \rightarrow X$ which coincides with
$f$ near $K$ and such that ${\cal R} (dg)$ and $F$ are homotopic
rel. $T(M)|_K$. Moreover, the map $g$ can be chosen arbitrarily $C^0$-close to $f$
and with arbitrarily small wrinkles.
\end{theorem}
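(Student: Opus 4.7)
The plan is to establish the theorem by an inductive construction over a sufficiently fine handle decomposition of $M$ relative to $K$, reducing the global problem to a local model on a single handle which is solved by explicit wrinkle insertion. This is a relative, parametric h-principle, and the overall strategy follows Gromov's scheme of inductively holonomizing a formal section.

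First I would pick a smooth triangulation $T$ of $M$, very fine compared with how much $F$ varies, and having a neighborhood of $K$ as a subcomplex. By Phillips' submersion theorem, $f$ can be homotoped to a genuine submersion on a neighborhood of the $0$-skeleton, with $F$ homotoped there to $df$. One then proceeds inductively over the skeleton: assume a wrinkled $g$ has been constructed on a neighborhood of the $(k-1)$-skeleton together with a homotopy, fixed near $K$, from $F$ to $\mathcal{R}(dg)$. The step across a single $k$-handle is governed by the following Local Lemma: for $M = D^{n+d}$, $X = \R^n$, and an epimorphism $F \colon TM \to f^*TX$ covering $f$ with $F = df$ near $\partial M$, there exists a wrinkled map $g$ agreeing with $f$ near $\partial M$ together with a homotopy of epimorphisms, fixed near $\partial M$, from $F$ to $\mathcal{R}(dg)$.

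The substance of the argument is the Local Lemma, which I would prove by reducing to a parametric family of one-dimensional Cerf-type problems. After a preliminary homotopy of $F$ supported in $\Int M$, one can arrange that $n-1$ rows of $F$ coincide with $dy_1, \dots, dy_{n-1}$, where $y_1, \dots, y_{n-1}$ are standard coordinates on $X$; this reduces the question to deforming, through a formal homotopy, a family of real-valued functions on $\R^{1+d}$ parametrized by $(y_1, \dots, y_{n-1}) \in \R^{n-1}$. Parametric Morse theory allows such a formal deformation to be realized by a genuine homotopy whose only singular locus is an $(n{-}2)$-sphere of birth--death points in the parameter space; after adding the transverse non-degenerate quadratic form $Q$ in the $d$ remaining coordinates, a round such sphere produces exactly the wrinkle $w(n+d, n, s)$, with the index $s$ being the signature of $Q$. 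The regularized differential $\mathcal{R}(dg)$ is then homotopic to $F$ by construction, since wrinkle regularization extends $df$ from outside a neighborhood of the wrinkle sphere to a genuine epimorphism agreeing with the prescribed formal section.

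The main obstacle, as in any Cerf-theoretic argument, is showing that the parametric family of one-dimensional problems admits a solution whose singular locus is a union of round wrinkle spheres, rather than a more complicated stratified set. Two ingredients are needed: a parametric version of Lemma \ref{lm:2fold-creation} providing elementary wrinkle creation, and a genericity argument in the spirit of the classical Cerf stratification guaranteeing that an arbitrary formal deformation can be made transverse to the codimension-one locus of birth--death phenomena while avoiding higher-codimension strata such as swallowtails. Once the Local Lemma is in hand, the induction over $T$ produces the global $g$, and because each inserted wrinkle is supported in a single handle, taking $T$ fine enough from the outset delivers both the $C^0$-closeness of $g$ to $f$ and the smallness of individual wrinkles asserted in the statement.
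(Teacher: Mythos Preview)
The paper does not prove Theorem~\ref{thm:wrinkled} at all: it is stated there as ``the main result of the paper [EM1]'' (i.e.\ \cite{EM97}) and quoted without proof, serving only as input to the derivation of Theorem~\ref{thm:special-folded} in Appendix~A. So there is no proof in the paper to compare your attempt against.

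That said, your sketch is a reasonable outline of the strategy actually used in \cite{EM97}: induction over a fine skeleton of $M$ relative to $K$, reduction to a local problem on a cube, and a fibered reduction to a parametric one-dimensional problem solved by inserting birth--death pairs. Two comments are in order. First, invoking Phillips' submersion theorem on the $0$-skeleton is unnecessary and slightly circular in spirit (Phillips' theorem is essentially a corollary of the wrinkling theorem); near a point the homotopy from $F$ to $df$ is elementary linear algebra. Second, and more substantively, the step you flag as ``the main obstacle''---arranging that the singular locus of the parametric family consists of round $(n{-}2)$-spheres of birth--death points rather than a general stratified set---is exactly the heart of the matter, and your appeal to ``a genericity argument in the spirit of the classical Cerf stratification'' does not by itself deliver \emph{round} spheres bounding disks on which the regularization can be performed. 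In \cite{EM97} this is handled by a direct construction (iterated wrinkle creation along the inductive step) rather than by post-hoc normalization of a generic Cerf locus; your sketch would need to be sharpened here to become a proof.
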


\subsubsection{Cusp eliminating surgery}\label{sec:cusp-surgery}

We are going to modify each wrinkle to a spherical double fold
using {\it cusp elimination surgery}, which is  one of the surgery operations studied in \cite{El72}. Unlike fold elimination surgeries described above in Section \ref{sec:surgery-general}  cusp elimination surgery does not affect the underlying manifold and changes a map by a homotopic one.  For maps
$\bbR^2\to\bbR^2$ the operation  is shown on Fig.\ref{mw1}.
  
\begin{figure}[hi]
%\centerline{\psfig{figure=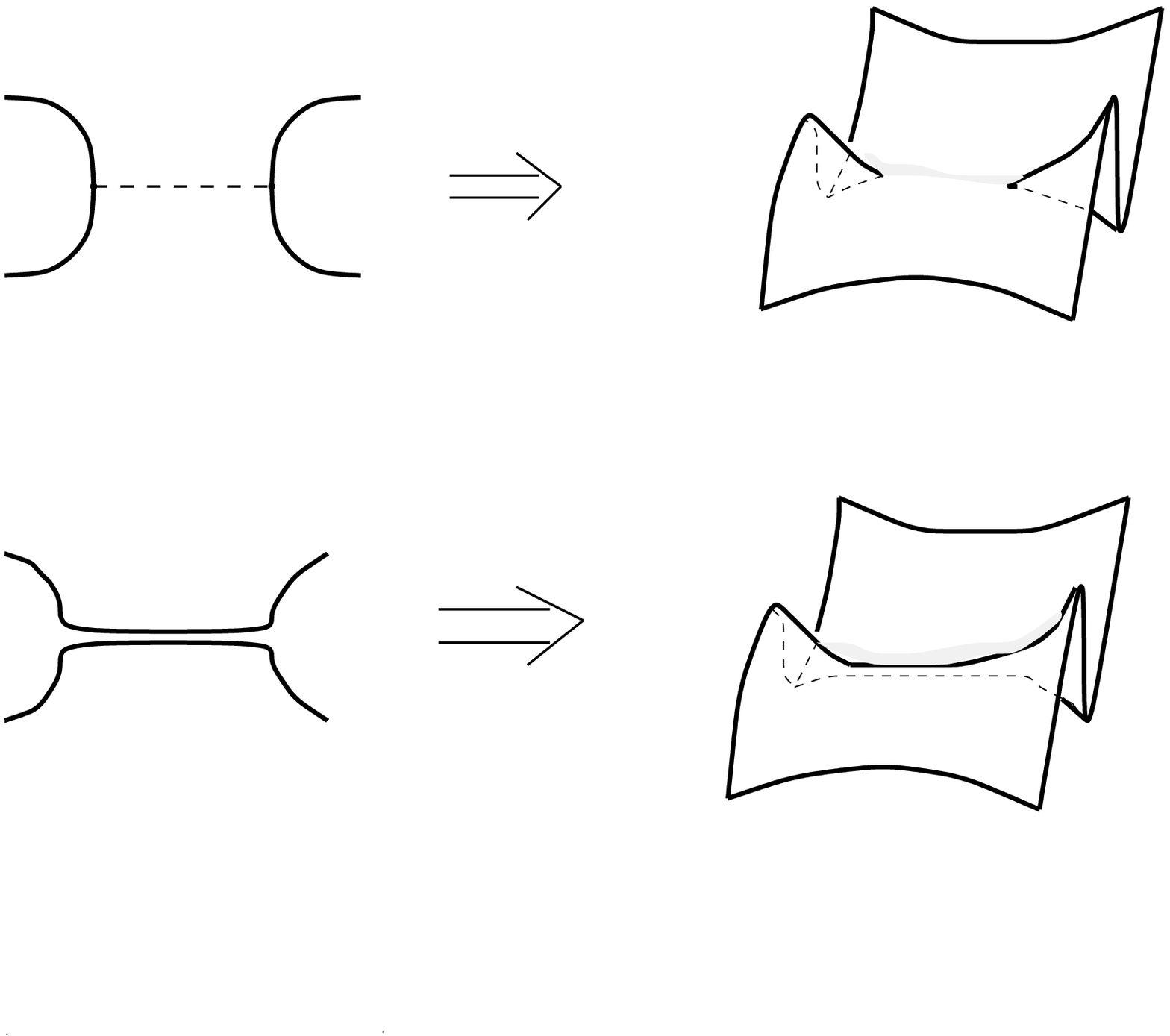,height=60mm}}
\centerline{\includegraphics[height=70mm]{mw1.eps}}
\caption{Cusp eliminating surgery in the case $n=2, d=0$}
\label{mw1}
\end{figure}
\begin{definition}{\rm
Let $C\subset\Sigma(f)$ be a connected component of the cusp locus. Let $Y$ be the characteristic vector field of $C$. Suppose that the bundles $\Ker_-,\Ker_+$ and $\lambda$ over $C$ are trivialized, respectively, by the frames 
$$\left(\frac{\p}{\p x_1},\dots, \frac{\p}{\p x_s}\right),\; \left(\frac{\p}{\p x_{s+1}},\dots, \frac{\p}{\p x_d}\right)\;\hbox{and}\; \frac{\p}{\p z}.$$
A {\em basis } for a cusp eliminating surgery consists
 of an $(n-1)$-dimensional submanifold $A\subset M$ bounded by $C$, together with an extension of the above framing as a trivialization of the normal bundle   $\nu$ of $A$ in $M$, such that
 \begin{itemize}
 \item $ f|_{\Int A}:\Int A\to X$ is an immersion;
  \item the characteristic vector field $Y$ is tangent to $A$ along $C$, and inward transversal to
 $C=\p A$;
 \item $\frac{\p}{\p x_j}\in\Ker\,df$ for all $j=1,\dots, d$.
 \end{itemize}}
 \end{definition}

Let us extend $A$ to a slightly bigger manifold $\wt A$ such that $\Int \wt A\supset A$, and extend the framing over   $\wt A$. 
One can show (see \cite{El72} and \cite{Ar76}) that there exists a splitting
 $U\to\wt A\times\R \times \R^d $ of a tubular neighborhood of $\wt A$ in $M$, such that in the corresponding local coordinates $ y\in\wt A,\, z\in\R$ and $x=(x_1,\dots, x_d)\in\R^d$ the map $f$
 can be presented as a composition $$U\mathop{\to}\limits^F\wt A\times\R\mathop{\to}\limits^h X,$$
 where $h$ is an immersion and $F$ has the form
 $$F(  y,z, x)=\left( y, z^3+\phi_0(\wt y)\sigma(\frac1\epsilon(z^2+\sum\limits_1^d x_j^2))z- \sum^s_1 x_i^2 +
\sum^{d}_{s+1} x^2_j\right) ,$$ where the function $\phi_0:\R\to\R$ satisfies
 $\phi_0<0 $ on $\Int A\subset \wt A$ and $\phi_0>0 $ on $\wt A\setminus A$,   $\sigma:[0,1]\to[0,1]$ is a cut-off function equal to $1$ near $0$ and to $0$ near $1$, and $\eps>0$ is small enough.
 
 \medskip
 Consider another function $\psi_1:\wt A\to(-\infty,0)$ which coincides with $\phi_0$ outside $\Op A\subset\wt A$ and such that $|\phi_1|\leq|\phi_0|.$
 Denote $\phi_t:=(1-t)\phi_0+t\phi_1$, $t\in[0,1]$, and consider homotopies
 $$F_t(y,z,x)=\left(\wt y, z^3+\phi_t(\wt y)\sigma(\frac1\epsilon(z^2+\sum\limits_1^d x_j^2))z- \sum^s_1 x_i^2 +
\sum^{d}_{s+1} x^2_j\right),$$ $( y,z,x)\in U$,
and $f_t=h\circ F_t:U\to X$. The homotopy $f_t$ is supported in $U$ and hence can be extended to the whole manifold $M$ as equal to $f$ on $M\setminus U$. The next proposition is straightforward.
 \begin{proposition}\label{prop:cusp-surgery}
 \begin{enumerate}
 \item The homotopy $f_t$ removes the cusp component $C$. The map $ f_1$ coincides with $f_0$ outside $U$, has only fold type singularities in $U$, and   $$\Sigma(f_1|_U)=\{x=0, z^2=-\wt\phi_1(\wt y)\}.$$
 \item   Suppose that $\Sigma(f)\setminus C$ consists of only fold points and that the restriction of the map $f$ to $\Sigma\cup A$ is an embedding. Then the restriction $f_1|_{\Sigma (f_1)}:\Sigma (f_1)\to X$ is an embedding provided that the neighborhood $U\supset A$ in the surgery  construction is chosen small enough.
 \end{enumerate}
 \end{proposition}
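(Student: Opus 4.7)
The proof is essentially a direct inspection of the model formula for $F_t$, so the plan is to unwind the definitions and check everything on the local model, then propagate the conclusion back to $M$ using that $h$ is an immersion.

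For part (1), I would first compute $\Sigma(F_t)$. Away from the cut-off region $\{z^2+\sum x_j^2<\varepsilon\}$ the map $F_t$ equals $f_0$ and is unchanged. Inside that region, $\sigma\equiv 1$, so $F_t$ has the form $(y,z,x)\mapsto(y,\, z^3+\phi_t(y)z-\sum_1^s x_i^2+\sum_{s+1}^d x_j^2)$ and its singularities are determined by $3z^2+\phi_t(y)=0$ and $x=0$. At $t=0$, $\phi_0<0$ on $\Int A$ and $\phi_0>0$ on $\wt A\setminus A$, with a fold of positive and negative indices meeting at the cusp locus $\{\phi_0=0,z=0\}$; this recovers $C$. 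At $t=1$, $\phi_1<0$ everywhere by construction, so the singular locus becomes $\Sigma(f_1|_U)=\{x=0,\ z^2=-\phi_1(y)/3\}$ (after absorbing the factor into the defining equation; this is the equation stated in the proposition up to scaling). Since $\phi_1$ is nowhere zero on $\wt A$, this singular locus is the smooth double cover of $\wt A$ by $z=\pm\sqrt{-\phi_1(y)/3}$, and the transverse quadratic form in the $x$-directions shows each sheet is a fold; no cusps remain. Finally, for intermediate $t$ one checks that $\phi_t$ has regular value $0$ on $\wt A$ so that the cusp locus of $f_t$ either persists as a smooth family or disappears through a cancellation at a finite set of $t$'s, confirming that the homotopy is supported in $U$ and eliminates the cusp component $C$.

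For part (2), the key computation is that the image under $F_1$ of the new fold locus is tame. Evaluating on $\{x=0,\ 3z^2=-\phi_1(y)\}$ gives $F_1(y,z,0)=(y,z^3+\phi_1(y)z)=(y,z(3z^2+\phi_1(y))/3\cdot\text{const})$, which simplifies to $(y,0)$ once the defining relation of the fold is used. Thus $F_1$ sends the new fold into the slice $\wt A\times\{0\}\subset\wt A\times\R$, and so $f_1=h\circ F_1$ sends the new fold into the immersed submanifold $h(\wt A\times\{0\})$, which is the same as $f(A)$ (up to a small enlargement). Under the hypothesis that $f|_{\Sigma(f)\cup A}$ is an embedding, we may shrink the tubular neighborhood $U\supset A$ so that $h$ is an embedding on an appropriate open piece of $\wt A\times\R$ containing all relevant images, and so that the image $h(\wt A\times\{0\})$ remains disjoint from $f(\Sigma(f)\setminus C)$. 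Since the new fold is a double cover of $\wt A$ realized inside $U$ as two disjoint graphs $\{z=\pm\sqrt{-\phi_1(y)/3}\}$ above $\wt A$, and both sheets project to $\wt A\times\{0\}$ via $F_1$, one has to be careful that the two sheets are distinguished by $h$. But this is automatic because $h$ is an immersion of $\wt A\times\R$ and the two sheets only meet after projecting to $\wt A\times\{0\}$, which does not occur in the image of $f_1$; thus $f_1|_{\Sigma(f_1)}$ is an embedding provided $U$ is chosen small enough.

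The potential obstacle is the global embedding claim in (2): one must ensure not only that the local model is embedded but that the new fold does not meet the image of old singularities elsewhere. This is a standard neighborhood-shrinking argument once one knows $f(A)$ is embedded and disjoint from $f(\Sigma(f)\setminus C)$, which is part of the hypothesis. Everything else reduces to the explicit model computation above, justifying the authors' word ``straightforward''.
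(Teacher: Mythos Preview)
Your treatment of part (1) is fine and matches what the authors intend: the singular locus is read off from $3z^2+\phi_t(y)=0$, $x=0$ inside the region where $\sigma\equiv 1$, and for $t=1$ the strict negativity of $\phi_1$ forces a smooth fold locus with no cusps.

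Part (2), however, contains an algebra slip that breaks the argument. On the new fold $\{x=0,\ 3z^2+\phi_1(y)=0\}$ you compute
\[
F_1(y,z,0)=(y,\,z^3+\phi_1(y)z)
\]
and then claim this equals $(y,0)$. It does not: substituting $\phi_1(y)=-3z^2$ gives $z^3+\phi_1(y)z=z^3-3z^3=-2z^3$, so
\[
F_1(y,\pm z,0)=\bigl(y,\,\mp 2z^3\bigr),\qquad z=\sqrt{-\phi_1(y)/3}>0.
\]
The two sheets of the new fold therefore land at \emph{distinct} points of $\wt A\times\R$, and $F_1|_{\Sigma(F_1)}$ is already an embedding before composing with $h$. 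Your attempted rescue (``the two sheets are distinguished by $h$ because $h$ is an immersion'') could not have worked: if $F_1$ really identified the two sheets, composing with any map $h$ would still identify them. Once the computation is corrected, the rest of the argument is exactly as you outline: the image $h\circ F_1(\Sigma(F_1))$ lies in an arbitrarily thin tube around $h(\wt A\times 0)=f(\wt A)$, and the hypothesis that $f|_{\Sigma(f)\cup A}$ is an embedding lets you shrink $U$ so that this tube is embedded and disjoint from $f(\Sigma(f)\setminus C)$.
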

 
 \subsubsection{From wrinkles to double folds}\label{sec:wrinkle-double}

 \begin{proposition}
 \label{prop:preparation}
Let
$$w(n+d,n,s): \bbR^{n-1}  \times \bbR^1\times \bbR^{d}
\rightarrow \bbR^{n-1} \times \bbR^1$$
be the standard
wrinkled map with the wrinkle $S^{n-1}\subset \bbR^n\times \bbR^d$. Suppose that $n>1$. Then
\begin{description}
\item{a)} there exists an embedding
$$h:D^{n-1}\to \Op_{\bbR^{n+d}}D^n$$ and a framing $\mu$ of the normal bundle to $A=h(D^{n-1} )\subset  \bbR^n\times \bbR^d$ such that the pair  $(A,\mu)$ forms a basis for a surgery eliminating the cusp $\Sigma^11(w)=S^{n-2}\subset S^{n-1}$ of the wrinkle;
\item{b)} if $d>0$ then one can arrange that the map $w(n+d,n,s)$ restricted to $\Sigma(w(n+d,n,s))\cup A$ is an embedding.
\end{description}
 \end{proposition}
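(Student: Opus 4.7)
Take $A := \{|y|\leq 1,\ z = 0,\ x = 0\} \cong D^{n-1}$, the flat equatorial disk inside the model disk $D^n \subset \R^n \times \{0\}$. Its boundary $\p A = \{|y|=1,\ z=0,\ x=0\}$ is exactly the cusp locus $C = S^{n-2}$ of the wrinkle. I frame the rank-$(d+1)$ normal bundle $\nu$ of $A$ in $\R^n \times \R^d$ by the constant coordinate fields $(\p/\p z,\,\p/\p x_1,\ldots,\p/\p x_d)$, which manifestly extends the required framings of $\lambda$, $\Ker_-$, and $\Ker_+$ along $C$.

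To verify part (a) I will check the three bullet items in the definition of a basis directly from the explicit formula for $w(n+d,n,s)$. The restriction $f|_{\Int A}(y,0,0) = (y,0)$ is an embedding of $\Int D^{n-1}$ into $X = \R^{n-1}\times\R$. Differentiating gives $\p f/\p x_j = \pm 2 x_j$, which vanishes on $A = \{x=0\}$, so $\p/\p x_j \in \Ker df$ along $A$. For the characteristic vector field at a cusp point $(y_0,0,0)$ with $|y_0|=1$, I use the invariant definition: the outward coorientation of the fold image is read off from the parameterizations $f(\Sigma_\pm) = \{(y,\mp 2(1-|y|^2)^{3/2})\}$, giving $Y = \sum_j y_{0,j}\,\p/\p y_j$, the radial field in $y$. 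This vector lies in $T_pA$ and is transversal to $T_p C$; its direction agrees with the sign convention of the surgery template function (the singular region $|y|<1$ is identified with $\Int A$, i.e.\ with the locus where $\phi_0<0$).

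For part (b) both $\Sigma(w)$ and $A$ lie in $\{x=0\}$ and meet cleanly along $C$. The three image sheets $f(A) = \{(y,0)\}$ and $f(\Sigma_\pm) = \{(y,\mp 2(1-|y|^2)^{3/2})\}$, each parameterized by $|y|\leq 1$, are pairwise disjoint off the cusp image $f(C) = \{|y|=1,\,u=0\}$, so $f$ is already injective on $\Sigma(w)\cup A$. When $d>0$ I deform $A$ by a small bump in some $x_j$-coordinate, supported away from $\p A$, so that at points of $f(C)$ the tangent plane of $f(A)$ becomes transverse to the common tangent plane of $f(\Sigma_\pm)$ there; this upgrades the set-theoretic injectivity to the stratified embedding required in the application. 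The perturbation is supported in the $x$-directions, can be taken arbitrarily small, and preserves all the conditions verified in part (a). The main obstacle will be arranging this perturbation compatibly with the framing $\mu$ of $\nu$ and the tangency of $Y$ to $A$ along $C$; this uses only routine transversality but is unavailable when $d=0$, which explains the hypothesis $d>0$ in (b).
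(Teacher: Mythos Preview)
The central problem is your choice of $A$. The flat equatorial disk $A = \{|y|\le 1,\ z=0,\ x=0\}$ does \emph{not} form a basis for the cusp-eliminating surgery: you correctly identify the characteristic vector field as the radial field $Y = \sum_j y_j\,\partial/\partial y_j$, but this points radially \emph{outward} at $C=\{|y|=1\}$, hence is \emph{outward} transversal to $\partial A$ for your disk, whereas the definition of a basis requires $Y$ to be \emph{inward} transversal to $C=\partial A$. Your sentence ``its direction agrees with the sign convention of the surgery template function (the singular region $|y|<1$ is identified with $\Int A$, i.e.\ with the locus where $\phi_0<0$)'' in fact exhibits the mismatch rather than resolving it: in the template the characteristic field points from the locus $\{\phi_0<0\}$ toward $\{\phi_0>0\}$, i.e.\ \emph{out} of $A$ under your identification. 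The membrane $A$ must lie on the side of $C$ toward which $Y$ points, namely the non-singular side $\{|y|>1\}$ near $C$.

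The paper's $A$ is accordingly a cap that leaves $C$ in the outward direction $|y|>1$ and then curves up and over the upper hemisphere of the wrinkle sphere; for this $A$ the field $Y$ is inward at $\partial A$ as required. But now $w|_{\Sigma\cup A}$ is \emph{not} injective --- the image $g_0 = w\circ h_0$ of the cap overlaps the image of the fold --- and this is precisely why part~(b) is nontrivial and needs $d>0$. The repair pushes the last coordinate of $g_0$ down by a suitable $\alpha\ge 0$ to make $g=(g_0^{n-1},\,g_0^1-\alpha)$ an embedding disjoint from the fold image, and simultaneously lifts $h$ into $\{x_1>0\}$ via $x_1=\sqrt{\alpha}$ so that $w\circ h=g$ (this uses $s>0$; the case $s=0$ uses the lower semi-ball instead). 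Your part~(b) discussion is built on the invalid $A$, so it does not touch this issue; indeed, the fact that your $A$ already yields injectivity without invoking $d>0$ is a sign that the wrong membrane has been chosen.
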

 
\begin{proof}  

It is easy to construct an embedding $h_0$ and a framing $\mu $  to satisfy a). 
\begin{figure}
\centerline{\includegraphics[height=30mm]{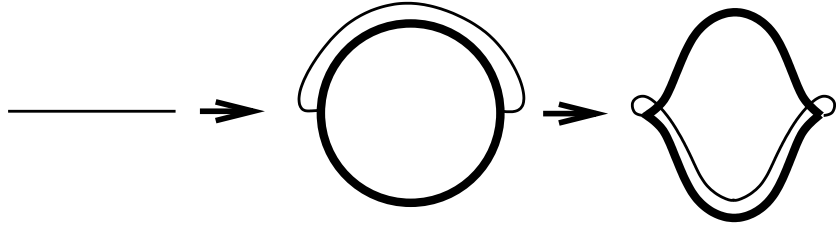}}
\caption{The embeddings $h_0$ and $g_0=w\circ h_0 $ (thin lines)}
\label{mw2}
\end{figure}
The construction is clear from  Fig.\ref{mw2}. The manifold $A$ in this case is obtained from the boundary  of the upper semi-ball $\{|y|^2+z^2\leq 1+\delta,z\geq 0\}\subset\R^{n-1}\times\R$ by removing the open disc
$D^{n-1}=\{z=0,|y|<1\}$, and then smoothing the corner. Here $\delta>0$ should be chosen small enough so that $A$ lie in the prescribed neighborhood of the wrinkle.

The framing $\mu$ is given by $\frac{\p}{\p x_1},\dots,\frac{\p }{\p x_d}$ and the normal vector field to $A$ in $\R^{n-1}\times\R$ which coincides with $\frac{\p}{\p z}$ near $\p A$.

%\begin{figure}
%\centerline{\psfig{figure=mw3.eps,height=35mm}}
%\caption{The embeddings $w\circ (h_0+\varepsilon \vec \p/\p x_1)$}
%\label{mw3}
%\end{figure}

Unfortunately the embedding $h_0$
does not satisfies the property $b)$. However,  if $d>0$  this can be corrected as follows.
We suppose that the index 
 $s> 0$ (if $s=0$ then one should start with an embedding $h$ obtained by smoothing the boundary of the {\it lower} semi-ball). 
 Let us denote by $g_0$ the composition $w\circ h_0:D^{n-1}\to\R^{n-1}\times\R$, and by by $g^{n-1}_0$ and $g_0^1$ the projections of $g_0$ to the first and second factors, respectively.

 \begin{figure}
\centerline{\psfig{figure=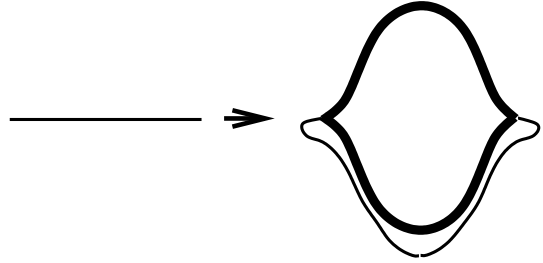,height=35mm}}
\caption{The embedding $  g$}
\label{mw4}
\end{figure}
 
For any $\eps>0$ one can choose $\delta$ in the construction of $h_0$ so small that  there exists a function $\alpha:D^{n-1}\to[0,\eps)$ such that
 \begin{itemize}
 \item $\alpha$ vanishes along $\p D^{n-1}$ together with all its derivatives;
 \item $\alpha|_{\Int D^{n-1}}>0$;
 \item the function $  g^1=g^1_0-\alpha$ has a unique interior critical point, the minimum, at $0\in D^{n-1}$;
 \item  the map $ g=(g^{n-1}_0, g^1):D^{n-1}\to\R^{n-1}\times\R$ is an embedding, and the image $ g(\Int D^{n-1})$ does not intersect the image of the wrinkle.
 \end{itemize}
  Next, take an embedding $  h: D^{n-1}\to \R^{n-1}\times\R\times \R^d$ given  by
 $$(y,z)=h_0(u), x_1=\sqrt{\alpha(u)}, x_j=0,j=2,\dots,d,$$
 $ y\in \R^{n-1}, \;z\in\R,\; x=(x_1,\dots, x_d)\in\R^d.$ Then we have $  g=w\circ h$, and hence the embedding $ h$ satisfies the property b) of Proposition \ref{prop:preparation}.
   \end{proof}
 Combining Propositions \ref{prop:preparation} and \ref{prop:cusp-surgery} we get

\begin{proposition}\label{prop:wrinkle-to-double}
 \label{thm:surgery}
   There exists a $C^0$-small perturbation of the
map $w(n+d,n,s)|_{\Op_{\bbR^{n+d}}D^n}$ in an arbitrarily small
neighborhood of the embedded disk $h(D^n)$ constructed in \ref{prop:preparation} such that the resulting map
$\widetilde w(n+d,d,s)$ is a special folded map with only one double
fold (of index $s+\frac{1}{2}$). Moreover, the regularized  differentials of $w(n+d,n,s)$ and 
$\widetilde w(n+d,d,s)$  are homotopic.
\end{proposition}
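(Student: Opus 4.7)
The plan is to combine Propositions \ref{prop:preparation} and \ref{prop:cusp-surgery} directly. First I would apply Proposition \ref{prop:preparation}(a) to the standard wrinkle $w = w(n+d,n,s)$ on $\Op_{\R^{n+d}} D^n$, obtaining an embedded $(n-1)$-disk $A = h(D^{n-1})$ spanning the cusp equator $\Sigma^{11}(w) = S^{n-2}$, together with a framing $\mu$ of its normal bundle satisfying the requirements for a cusp eliminating surgery basis. Since $d > 0$, part (b) ensures that this basis can be chosen so that $w|_{\Sigma(w) \cup A}$ is an embedding.

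Next, I would apply the cusp eliminating surgery of Proposition \ref{prop:cusp-surgery} along this basis. Enlarging $A$ slightly to $\wt A$ and extending the framing gives canonical coordinates $(y,z,x)$ in a tubular neighborhood $U$ and a factorization $w|_U = h \circ F$ with $F$ in the explicit form given there. Choosing $\phi_1: \wt A \to (-\infty,0)$ that agrees with $\phi_0$ outside a small neighborhood of $A$, satisfies $|\phi_1| \le |\phi_0|$, and has zero set equal to a smooth, round $(n-2)$-sphere bounding an $(n-1)$-disk inside $\wt A$, the linear interpolation $\phi_t = (1-t)\phi_0 + t\phi_1$ yields a compactly supported homotopy $f_t$ which is $C^0$-small provided $U$ is small. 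By item (1) of Proposition \ref{prop:cusp-surgery}, the cusp is eliminated and $\wt w := f_1$ has fold singular set inside $U$ equal to $\{x=0,\ z^2 = -\phi_1(y)\}$. This set is precisely the union of two graphs $z = \pm\sqrt{-\phi_1(y)}$ meeting smoothly along the zero sphere of $\phi_1$, i.e.\ an $(n-1)$-sphere; the two hemispheres inherit the fold indices $s$ and $s+1$ from the two sides of the original cusp, so this is a spherical double fold of index $s + \tfrac{1}{2}$. By item (2) of Proposition \ref{prop:cusp-surgery} together with part (b) of Proposition \ref{prop:preparation}, the restriction of $\wt w$ to $\Sigma(\wt w)$ is an embedding; its image is a $C^0$-small deformation of $w(S^{n-1})$ and hence bounds a ball in the target. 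Outside $U$, $\wt w$ coincides with $w$ and is therefore a submersion, so $\wt w$ is a special folded map with a single spherical double fold.

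It remains to check that $\mathcal{R}(dw)$ and $\mathcal{R}(d\wt w)$ are homotopic. For every $t$, the singular set of $f_t$ is contained in $U$, and outside $U$ all the $f_t$ agree with $w$. Inside $U$ the explicit formula for $F_t$ shows that the regularization recipe used in Section \ref{ss:wrinkles} — replacing the Jacobian factor $3(z^2 + \phi_t(y)\sigma(\cdots))$ by a positive function that agrees with it outside a neighborhood of the singular locus — can be chosen uniformly in $t$. This is the only step that requires a little care, but the uniformity is immediate from the shape of the formula, and it produces a continuous family of epimorphisms $\mathcal{R}(df_t): T\R^{n+d} \to T\R^n$ interpolating between $\mathcal{R}(dw)$ and $\mathcal{R}(d\wt w)$, completing the proof.
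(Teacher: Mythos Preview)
Your overall strategy is exactly the paper's: the paper's proof is the single sentence ``Combining Propositions~\ref{prop:preparation} and~\ref{prop:cusp-surgery} we get [the proposition]'', and you carry this out with more detail. Most of your added detail is fine, but one point is genuinely wrong and worth fixing.

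Your description of the fold locus after surgery is incorrect. You write that $\Sigma(\wt w)$ inside $U$ is ``the union of two graphs $z=\pm\sqrt{-\phi_1(y)}$ meeting smoothly along the zero sphere of $\phi_1$, i.e.\ an $(n-1)$-sphere; the two hemispheres inherit the fold indices $s$ and $s+1$''. But a connected fold component without cusps has a \emph{single} index (the index is locally constant on $\Sigma(f)\setminus\Sigma^{11}(f)$), so a single $S^{n-1}$ carrying indices $s$ and $s+1$ on its hemispheres would force cusps on the equator --- precisely what the surgery removed. The correct picture is that the cusp surgery disconnects the fold: the original wrinkle sphere $S^{n-1}$, which was two hemispheres meeting along the cusp equator $S^{n-2}$, becomes two disjoint $(n-1)$-spheres, one entirely of index $s$ and one of index $s+1$. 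This is exactly $S^{n-1}\times S^0$, the fold locus of a spherical double fold. (Related to this, your conditions on $\phi_1$ are self-contradictory: a function $\phi_1:\wt A\to(-\infty,0)$ cannot have a nonempty zero set. The paper's own formulation here has typos, which may have misled you; the point is simply that after surgery $\phi_1$ should have no zeros, so no cusps remain.)

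Your argument for the homotopy of regularized differentials is reasonable and in the spirit of what the paper intends.
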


Theorem \ref{thm:wrinkled} and Proposition \ref{prop:wrinkle-to-double}
yield Theorem \ref{thm:special-folded}.
\subsection{Appendix B: Hurewicz theorem for
oriented bordism}\label{sec:Hurewitz}

As before, let $\Omega_* = \Omega_*^{\mathrm{SO}} (-)$ denote oriented
bordism.  In this appendix we give a proof of the following well known
lemma.
\begin{lemma}
  Let $f: X \to Y$ be a continuous map of topological spaces.  Then
  the following statements are equivalent.
  \begin{enumerate}[(i)]
  \item $f_*: H_k(X) \to H_k(Y)$ is an isomorphism for $k < n$ and an
    epimorphism for $k = n$.
  \item $f_*: \Omega_k(X) \to \Omega_k(Y)$ is an isomorphism for $k <
    n$ and an epimorphism for $k = n$.
  \end{enumerate}
  In particular, $f$ induces an isomorphism in homology in all degrees
  if and only if it does so in oriented bordism.
\end{lemma}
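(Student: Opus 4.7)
The plan is to reduce the comparison of the two theories on the map $f$ to a relative vanishing statement, and then establish the equivalence of the two relative vanishing statements by a simultaneous induction on $n$. First, I would replace $f: X \to Y$ by the cofibration $X \hookrightarrow M_f$ into the mapping cylinder of $f$, which is homotopy equivalent to $Y$. Both $H_*$ and $\Omega_*$ satisfy the long exact sequence of a pair, so applying the five-lemma to the natural map of long exact sequences shows that (i) is equivalent to $H_k(M_f, X) = 0$ for all $k \leq n$, and (ii) is equivalent to $\Omega_k(M_f, X) = 0$ for all $k \leq n$. Thus the problem reduces to the following claim: \emph{for any CW pair $(Y, X)$, $H_k(Y, X) = 0$ for all $k \leq n$ if and only if $\Omega_k(Y, X) = 0$ for all $k \leq n$.}

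I would then prove the reduced claim by simultaneous induction on $n$. The base case $n = 0$ is immediate, since $\Omega_0(Y, X) = H_0(Y, X) = \Z[\pi_0(Y)/\pi_0(X)]$ under the natural transformation $\tau \colon \Omega_* \to H_*$ sending $[g \colon M \to Y]$ to $g_*[M, \partial M]$. For the inductive step in the direction $H \Rightarrow \Omega$, assume the equivalence for $n - 1$, so that $\Omega_k(Y,X) = 0$ for $k < n$. Pick $[g \colon (M^n, \partial M) \to (Y, X)]$. The hypothesis $H_n(Y, X) = 0$ gives $g_*[M, \partial M] = 0$ on the chain level, producing a relative $(n+1)$-chain whose boundary is $g_\#[M, \partial M]$. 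I realize this chain-level null-homology geometrically by a sequence of surgeries on $M$, attaching handles to $M \times I$ whose attaching spheres and frames are chosen to track the null-homologous cycles and to be compatible with $g$. The resulting oriented $(n+1)$-manifold $W$, together with a map $W \to Y$ extending $g$ and with $\partial W$ decomposed as $M$ together with a piece mapping into $X$, exhibits $[g]$ as null-bordant in $\Omega_n(Y, X)$.

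For the inductive step in the direction $\Omega \Rightarrow H$, assume $\Omega_k(Y,X) = 0$ for $k \leq n$; the inductive hypothesis gives $H_k(Y, X) = 0$ for $k < n$. Given $\alpha \in H_n(Y, X)$, the idea is to realize $\alpha$ as $g_*[M, \partial M]$ for some oriented manifold-with-boundary mapping to $(Y, X)$; then $[g] \in \Omega_n(Y, X) = 0$ forces $\alpha = 0$. Representability is Thom's theorem in a relative form. The main input is that the only potential obstructions to realizing $\alpha$ lie in $H_{n-j}(Y, X; \Omega_{j-1}^{\mathrm{SO}})$ for $j \geq 1$; using $\Omega_1^{\mathrm{SO}} = \Omega_2^{\mathrm{SO}} = \Omega_3^{\mathrm{SO}} = 0$ together with the inductive vanishing $H_k(Y, X) = 0$ for $k < n$, these obstruction groups all vanish, so $\alpha$ lifts through $\tau$ and the conclusion follows.

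The main obstacle will be the geometric realization of the chain-level null-homology by surgery in the $H \Rightarrow \Omega$ step: one must attach handles so that orientations are preserved and the map to $Y$ extends over each handle, which amounts to a careful choice of framings of attaching spheres in $M$ over $Y$. The other step, representability, is classical in spirit but requires verification that the inductive vanishing really does eliminate all higher obstructions; this is the place where the particular low-dimensional vanishing $\Omega_j^{\mathrm{SO}}(\mathrm{pt}) = 0$ for $j = 1, 2, 3$ enters the argument essentially.
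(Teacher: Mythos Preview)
Your reduction to the relative statement via the mapping cylinder and the five-lemma is correct, and your $\Omega \Rightarrow H$ direction is essentially the Atiyah--Hirzebruch edge-map argument, which works. However, your $H \Rightarrow \Omega$ step contains a genuine gap. You write that the chain-level null-homology of $g_*[M,\partial M]$ can be ``realized geometrically by a sequence of surgeries on $M$'', but this is exactly the content of the implication and cannot be taken for granted. A singular $(n+1)$-chain bounding $g_\#[M]$ carries no manifold structure, and there is no general mechanism for converting it into handle attachments on $M\times I$; the failure of such a mechanism is precisely what the nonvanishing of $\Omega_*(\mathrm{pt})$ in degrees $\geq 4$ records. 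Nothing in your description (``attaching spheres chosen to track the null-homologous cycles'') specifies which spheres, why they embed with the correct framing, or why the map to $Y$ extends over each handle. The honest argument in this direction is the same spectral-sequence reasoning you used in the other direction: if $H_p(Y,X)=0$ for $p\le n$ then every $E^2_{p,q}=H_p(Y,X;\Omega_q)$ with $p\le n$ vanishes, and since $q\geq 0$ this kills all of $\Omega_k(Y,X)$ for $k\le n$. Relatedly, your claim that the low-dimensional vanishing $\Omega_1^{\mathrm{SO}}=\Omega_2^{\mathrm{SO}}=\Omega_3^{\mathrm{SO}}=0$ ``enters essentially'' is a misconception: once $H_k(Y,X)=0$ for $k<n$ by induction, the obstruction groups $H_{n-j}(Y,X;\Omega_{j-1})$ vanish for every $j\ge 1$ regardless of the coefficient group, so that vanishing is irrelevant.

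The paper's proof is genuinely different. It suspends: $\Sigma f$ is a map of simply-connected spaces, and the classical Hurewicz theorem shows $\Sigma f$ is a weak equivalence iff $f$ is a homology isomorphism. The paper then proves a \emph{bordism Hurewicz theorem}---for an $(n-1)$-connected pointed space the map $\pi_n\to\Omega_n$ is an isomorphism, with inverse constructed by choosing a CW structure on a representing manifold with a single top cell---and deduces that $\Sigma f$ is a weak equivalence iff $f$ is a bordism isomorphism. This route avoids the spectral sequence and obstruction theory entirely, using only CW approximation and an attaching-map argument; on the other hand the paper's Appendix as written really only handles the ``isomorphism in all degrees'' case, so your relative framing is in that respect sharper once the gap above is repaired.
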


For a pair $(X,A)$ of spaces, let $\Omega_n(X,A) =
\Omega^{\mathrm{SO}}_n(X,A)$ denote the set of bordism classes of
continuous maps of pairs
\begin{equation*}
  f: (M^n, \partial M^n) \to (X,A)
\end{equation*}
for smooth oriented compact manifolds $M^n$ with boundary $\partial
M$.  There is a ``cycle map'' $\Omega_n(X,A) \to H_n(X,A)$ that maps
the class of $f$ to $f_*([M]) \in H_n(X,A)$.

Recall that $\Omega_*(X,A)$ is a ``generalized homology theory'',
i.e.\ it satifies the same formal properties as singular homology (the
Eilenberg-Steenrod axioms except the dimension axiom).  In particular
we have a long exact sequence for pairs of spaces.  If $f: X \to Y$ is
an arbitrary map, then we can replace $Y$ by the mapping cylinder of
$f$.  The axioms implies there is a natural long exact sequence
\begin{equation*}
  \dots \to \Omega_n(X) \to \Omega_n(Y) \to \Omega_n(C(f),x) \to
  \Omega_{n-1}(X) \to \dots
\end{equation*}
where $C(f)$ is the mapping cone and $x \in C(f)$ is the cone point.

It seems well known that a map $X \to Y$ induces an isomorphism in
oriented bordism if and only if it induces an isomorphism in singular
homology (with $\Z$-coefficients).  This can be seen e.g.\ from the
Atiyah-Hirzebruch spectral sequence.  We offer a more geometrical
argument, based on Hurewicz' theorem.  Recall that this says that a
map $f: X \to Y$ of simply connected spaces is a weak equivalence if
and only if it is a homology equivalence.

We will use the \emph{unreduced suspension} $\Sigma X$ of a space $X$.
This is the union $CX \cup_X CX$ of two cones.  We will regard $\Sigma
X$ as a pointed space, using one of the cone points (which we denote
$N$) as basepoints.  It is easily seen that $\Sigma X$ is simply
connected if and only if $X$ is arcwise connected.  Also it follows
from the Mayer-Vietoris exact sequence that $h_*(\Sigma X,N) \cong
\tilde h_{*-1}(X)$ for any homology theory $h_*$ (e.g.\ singular
homology or oriented bordism).

\begin{lemma}
  A map $f: X \to Y$ of topological spaces induces a homology
  isomorphism if and only if the unreduced suspension $\Sigma f$ is a
  weak equivalence.
\end{lemma}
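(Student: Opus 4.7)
The plan is to combine the suspension isomorphism $H_n(\Sigma X, N) \cong \widetilde{H}_{n-1}(X)$ (natural in $X$, obtained from Mayer--Vietoris as recalled just before the lemma) with Whitehead's homology theorem for simply connected CW complexes. By CW approximation I would first replace $X$ and $Y$ by CW complexes compatibly with $f$; then $\Sigma X$ and $\Sigma Y$ are also CW complexes, so weak equivalence is detected by homotopy groups.

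For the forward direction, suppose $f_* \colon H_*(X) \to H_*(Y)$ is an isomorphism. Then $f$ induces a bijection on $\pi_0$, so I would split both sides into path components and exploit the wedge decomposition $\Sigma(\coprod_\alpha X_\alpha) \simeq \bigvee_\alpha \Sigma X_\alpha \vee \bigvee^{k-1} S^1$ (obtained by contracting, inside each component $\Sigma X_\alpha$, a path from $N$ to $S$ through a chosen point of $X_\alpha$; here $k = |\pi_0 X|$) to reduce to the path-connected case. In that case $\Sigma X$ and $\Sigma Y$ are simply connected CW complexes, the suspension isomorphism turns the hypothesis into an isomorphism $\widetilde{H}_*(\Sigma X) \to \widetilde{H}_*(\Sigma Y)$, and Whitehead's theorem gives that $\Sigma f$ is a weak equivalence.

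For the converse, if $\Sigma f$ is a weak equivalence then it is a homology isomorphism, and the suspension isomorphism immediately yields an isomorphism $\widetilde{H}_*(X) \to \widetilde{H}_*(Y)$. For $H_0$: since $\Sigma X$ is path-connected iff $X$ is nonempty, the bijection $\pi_0(\Sigma X) \to \pi_0(\Sigma Y)$ forces $X$ and $Y$ to be simultaneously empty or nonempty, and in the nonempty case a basepoint-induced splitting $H_0 = \widetilde{H}_0 \oplus \Z$ compatible with $f$ upgrades the $\widetilde{H}_0$-isomorphism to a genuine $H_0$-isomorphism.

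The main obstacle is the bookkeeping with path components in the forward direction; a cleaner alternative would be to check via van Kampen that $\pi_1(\Sigma f)$ is an isomorphism (with $\pi_1(\Sigma X)$ a free group of rank $|\pi_0 X| - 1$) and then pass to universal covers, but the explicit wedge decomposition above handles it concretely.
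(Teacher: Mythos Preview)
Your argument is correct and follows the same strategy as the paper's: combine the suspension isomorphism in homology with the homological Whitehead theorem for simply connected spaces (which the paper states just before the lemma and calls ``Hurewicz' theorem''). You are in fact more careful than the paper about the reduction to the path-connected case---the paper simply asserts ``we can assume that $X$ and $Y$ are arc connected''---though note that to obtain your wedge decomposition one should collapse a \emph{single} arc $\gamma_1$ rather than one in each $\Sigma X_\alpha$ simultaneously, since the union of all such arcs is the non-contractible graph $\Sigma(\{x_\alpha\}) \simeq \bigvee^{k-1} S^1$.
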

\begin{proof}
  If $\Sigma f$ is a weak equivalence, then it induces a homology
  isomorphism $(\Sigma f)_*$, but $\tilde H_n(X) = H_{n+1}(\Sigma X)$,
  so $f_*$ is an isomorphism as well.

  For the converse assume $f_*$ is an isomorphism.  Then $\pi_0(f)$ is
  an isomorphism so we can assume that $X$ and $Y$ are arc connected.
  Then $\Sigma X$ and $\Sigma Y$ are simply-connected.  Then Hurewicz'
  theorem implies that $(\Sigma f)$ is a weak equivalence.
\end{proof}
\bigskip

Below we will sketch a proof of the following Hurewicz theorem for
oriented bordism.
\begin{proposition}
  A map $f: X \to Y$ of simply connected spaces is a weak homotopy
  equivalence if and only if it induces an isomorphism in oriented
  bordism.
\end{proposition}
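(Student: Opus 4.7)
The plan is to prove both directions of the equivalence, but only one of them has content: the forward direction (weak equivalence implies bordism isomorphism) is immediate since $\Omega_*(-)$ is a homotopy functor, so I focus on the converse. For the converse, I would first reduce to an absolute claim. Replacing $Y$ by the mapping cylinder of $f$, I may assume $f: X \hookrightarrow Y$ is a cofibration, and I form the cofiber $Z = Y/X$. Because $X$ is path-connected and both $X$ and $Y$ are simply connected, a van Kampen argument (applied to the two halves of the unreduced mapping cone) shows that $Z$ is simply connected. The long exact sequence of the pair $(Y,X)$ in oriented bordism, combined with the natural identification $\Omega_*(Y,X) \cong \tilde{\Omega}_*(Z)$, gives $\tilde{\Omega}_*(Z) = 0$. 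It therefore suffices to establish the following absolute claim: \emph{a simply connected pointed space $Z$ with $\tilde{\Omega}_*(Z) = 0$ is weakly contractible.}

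I would prove the claim by induction on $k$ that $\tilde{H}_k(Z) = 0$ for every $k$; the classical Hurewicz theorem then forces $\pi_k(Z) = 0$ for all $k$, completing the proof. The base case $k \leq 1$ holds by simple connectedness. For the inductive step, assume $\tilde{H}_j(Z) = 0$ for $j < k$, with $k \geq 2$. Iterating the classical Hurewicz theorem shows that $Z$ is $(k-1)$-connected and that the Hurewicz map $h: \pi_k(Z) \to H_k(Z)$ is an isomorphism. Hence every $\alpha \in H_k(Z)$ is represented by some $\varphi: S^k \to Z$ with $\varphi_*[S^k] = \alpha$. The bordism class $[\varphi] \in \tilde{\Omega}_k(Z)$ vanishes by hypothesis, so there exist a compact oriented $(k+1)$-manifold $W$ with $\partial W = S^k$ together with a continuous extension $F: (W,\partial W) \to (Z,Z)$ of $\varphi$. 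Naturality of the connecting homomorphism in the long exact sequence of the pair $(W,\partial W)$ now yields
\[
  \alpha \;=\; \varphi_*[S^k] \;=\; \varphi_*\bigl(\partial[W,\partial W]\bigr) \;=\; \partial\bigl(F_*[W,\partial W]\bigr) \in H_k(Z),
\]
and since $F_*[W,\partial W]$ lies in $H_{k+1}(Z,Z) = 0$, we conclude $\alpha = 0$, closing the induction.

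The only delicate point is the reduction step: verifying that the cofiber $Z$ is simply connected and that $\tilde{\Omega}_*(Z) = 0$ both rely on mild technical hypotheses (it is safest to work in the category of CW complexes, which can be arranged by CW approximation). Once the reduction is in hand, the proof is a bootstrapped Hurewicz theorem whose only geometric ingredient is the naturality of the cycle map $\Omega_k(-) \to H_k(-)$ together with the trivial but crucial observation that a sphere which bounds a compact oriented manifold mapped into $Z$ represents the zero class in $H_k(Z)$.
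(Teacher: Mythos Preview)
Your proof is correct and follows the same overall architecture as the paper: reduce to the mapping cone $C(f)$, observe it is simply connected with vanishing reduced oriented bordism, and deduce that it is weakly contractible. The difference lies in how the last step is carried out. The paper runs the induction on $\pi_k$: it first proves a lemma that for an $(n-1)$-connected space the bordism Hurewicz map $h_n:\pi_n\to\Omega_n$ is an \emph{isomorphism}, constructing an explicit inverse by choosing a CW structure on an oriented $n$-manifold and analyzing the attaching map of its top cell; then $\Omega_k=0$ forces $\pi_k=0$. You instead run the induction on $H_k$: classical Hurewicz gives that every class in $H_k(Z)$ is spherical, and then you only need the trivial direction of the cycle map (a null-bordism kills the fundamental class in homology), which is exactly your naturality computation with the pair $(W,\partial W)\to(Z,Z)$. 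In effect, the paper proves and uses injectivity of $\pi_k\to\Omega_k$, while you use surjectivity of the classical $\pi_k\to H_k$ together with the factorization $\pi_k\to\Omega_k\to H_k$; your route avoids the CW-structure-on-manifolds argument entirely, at the cost of invoking the classical Hurewicz theorem one extra time inside the induction.
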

Using this we can easily prove the main theorem
\begin{theorem}
  A map $f: X \to Y$ of topological spaces is a homology isomorphism
  if and only if it induces an isomorphism in oriented bordism.
\end{theorem}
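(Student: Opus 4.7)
The plan is to reduce the theorem to the Hurewicz-type Proposition for oriented bordism stated just above, by suspending $X$ and $Y$ enough times to make them simply connected. Both singular homology with $\Z$-coefficients and oriented bordism are generalized homology theories, and for any such $h_*$ the Mayer-Vietoris sequence applied to the cover $\Sigma X = C_+X \cup C_-X$ by the two cones produces a natural suspension isomorphism $\widetilde{h}_n(X) \cong \widetilde{h}_{n+1}(\Sigma X)$ (this is the reduced form of the identity $h_n(\Sigma X, N) \cong \tilde h_{n-1}(X)$ already noted in the excerpt). Consequently $f \colon X \to Y$ induces an isomorphism on $h_*$ if and only if $\Sigma^k f$ does, for any $k \ge 0$.

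Next, I would choose $k$ large enough so that $\Sigma^k X$ and $\Sigma^k Y$ are simply connected. Since $\Sigma X$ is path connected whenever $X$ is nonempty, and $\Sigma^2 X$ is simply connected whenever $X$ is nonempty and path connected (by van Kampen), $k = 2$ suffices when $X, Y$ are nonempty and path connected; one additional suspension handles the possibly disconnected case, and the $X = \varnothing$ case is trivial.

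With $\Sigma^k X$ and $\Sigma^k Y$ simply connected, the theorem follows by a double application of Hurewicz. If $f$ induces an isomorphism on $H_*$, then so does $\Sigma^k f$, which by the classical Hurewicz theorem (used as in the Lemma above) is then a weak homotopy equivalence; the assumed Proposition upgrades this to an isomorphism on oriented bordism, and the suspension isomorphism for $\Omega_*$ transfers this conclusion back to $f$. The converse is entirely symmetric: a bordism iso for $f$ gives a bordism iso for $\Sigma^k f$, which by the Proposition is a weak equivalence and hence an $H_*$-iso, and this descends to $f$ by the suspension isomorphism for $H_*$.

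There is essentially no obstacle in this argument beyond bookkeeping; all of the real content sits in the Hurewicz-type Proposition for oriented bordism, which is the main technical input and the step where genuine argument is required (to be handled separately, e.g.\ by a geometric transversality argument in the spirit of the classical Hurewicz theorem, or via the Atiyah-Hirzebruch spectral sequence).
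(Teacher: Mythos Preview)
Your proposal is correct and follows essentially the same route as the paper: suspend to land in the simply connected setting, then invoke the classical Hurewicz theorem and the bordism Hurewicz proposition to show that both conditions are equivalent to the suspended map being a weak equivalence. The only cosmetic difference is that the paper suspends once and handles disconnected spaces by reducing to path components, whereas you suspend twice (or three times) to force simple connectivity directly; your bookkeeping here is arguably cleaner, but the argument is the same.
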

\begin{proof}
  We proved above that $f$ is a homology equivalence if and only if
  $\Sigma f$ is a weak equivalence.  Using the bordism Hurewicz
  theorem we can prove in the same way that $f$ is a bordism
  isomorphism if and only if $\Sigma f$ is a weak equivalence.
\end{proof}

We sketch a proof of a bordism Hurewicz theorem.  Let $h_n:
\pi_n(X,x_0) \to \Omega_n(X,x_0)$ be the map that maps the homotopy
class of a map $(D^n, \partial D^n) \to (X, x_0)$ to the bordism class
of the same map.  The composition of $h_n$ with the cycle map is the
classical Hurewicz homomorphism.

\begin{lemma}
  Let $n \geq 2$.  Let $(X,x_0)$ be a pointed topological space with
  $\pi_k(X,x_0) = 0$ for all $k < n$.  Then
  \begin{equation*}
    h_n: \pi_n(X,x_0) \to \Omega_n(X,x_0)
  \end{equation*}
  is an isomorphism.
\end{lemma}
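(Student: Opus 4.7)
The plan is to prove both surjectivity and injectivity of $h_n$ by geometric surgery, exploiting the hypothesis $\pi_k(X, x_0) = 0$ for $k < n$ to extend maps across surgery handles.

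The key observation driving the argument is the following: if $\phi : S^k \times D^{n-k} \hookrightarrow \operatorname{int}(M)$ is a framed embedding with $k < n$, then the restriction $f \circ \phi|_{S^k \times \{0\}} : S^k \to X$ represents a class in $\pi_k(X, x_0) = 0$, hence is null-homotopic. A null-homotopy extends first to $D^{k+1} \to X$ and then, via projection, across the handle $D^{k+1} \times D^{n-k}$. Therefore the trace of the $k$-surgery along $\phi$ is a bordism in $\Omega_n(X, x_0)$ between $(M, f)$ and the surgered pair $(M', f')$; the framings needed are provided by the orientability of $M$ together with the low dimension.

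For surjectivity, starting from $[M^n, f] \in \Omega_n(X, x_0)$, I would first perform $0$-surgeries to make $M$ connected, and then for $k = 1, 2, \ldots$ iteratively kill $\pi_k(M)$ by $k$-surgery on embedded framed representatives (embeddings furnished by general position whenever $2k+1 \leq n$). Past the middle dimension one continues via the dual handle decomposition and Poincar\'{e}--Lefschetz duality, reducing $M$ to the standard $n$-disk. The resulting $g : (D^n, S^{n-1}) \to (X, x_0)$ then satisfies $h_n([g]) = [M, f]$.

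For injectivity, given $g$ with $h_n([g]) = 0$ witnessed by a null-bordism $(W^{n+1}, \partial' W, F)$ with $\partial_- W = D^n$, $F|_{D^n} = g$ and $F(\partial' W) \subset \{x_0\}$, I would apply the same surgery procedure in $\operatorname{int}(W)$ (rel $\partial W$) to kill $\pi_k(W)$ for $k < n$. After simplification, $W$ can be arranged to be the standard ball $D^{n+1}$ with boundary sphere $D^n \cup_{S^{n-1}} \partial' W$, with $\partial' W$ itself a disk; then $F : D^{n+1} \to X$ restricts to $g$ on one hemisphere and to $x_0$ on the other, which is precisely a null-homotopy of $g$ rel $\partial D^n$.

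The principal obstacle I anticipate is the terminal step of driving $M$ (resp.\ $W$) all the way to the standard disk after all low-dimensional homotopy has been killed. Above the middle dimension this requires the Whitney trick and the $h$-cobordism theorem in dimensions $\geq 5$, and more direct arguments in low dimensions where one leverages the vanishing of $\Omega_k^{\mathrm{SO}}(\mathrm{pt})$ for $k \leq 3$ together with the explicit classification of surfaces and $3$-manifolds with boundary. The bordism relation is flexible enough to absorb any exotic-smooth-structure discrepancies (e.g.\ in dimension $4$), since any homotopy sphere is oriented-bordant to the standard sphere; thus the strategy is essentially uniform in $n$.
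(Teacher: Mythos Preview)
Your surgery strategy for surjectivity has a genuine gap: interior surgery on $M^n$ preserves the characteristic numbers of (the closed-up) $M$, so it cannot in general reduce $M$ to a disk. Take $n=4$ and $M=\mathbb{C}P^2$ (closed, so $\partial M=\varnothing$) with any map $f$ to the $3$-connected space $X$. Every trace $W^5$ of one of your surgeries has $\partial W=M\amalg(-M')$ and $\partial'W=\varnothing$, hence $[M]=[M']$ in $\Omega_4^{\mathrm{SO}}(\mathrm{pt})=\Z$; since $[\mathbb{C}P^2]\neq 0$ you never reach $M'=D^4$ or $S^4$. The phrase ``past the middle dimension one continues via the dual handle decomposition and Poincar\'e--Lefschetz duality'' does not supply a mechanism here: the generator of $\pi_2(\mathbb{C}P^2)$ has nontrivial normal bundle, so no $2$-surgery on it is available, and duality of handle decompositions reinterprets handles of a fixed cobordism rather than furnishing new surgeries. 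Your closing remark about homotopy spheres concerns only the endgame \emph{after} all homotopy is killed, which is never reached in this example. The injectivity argument has the analogous defect with $W^{n+1}$ in place of $M^n$.

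What your approach is missing is that relative bordisms in $\Omega_n(X,x_0)$ may carry a nontrivial $\partial'W$ over $x_0$: the cylinder $\mathbb{C}P^2\times[0,1]$ with $\partial'W=\mathbb{C}P^2\times\{1\}$ sent to $x_0$ witnesses $[\mathbb{C}P^2,\mathrm{const}]=0$ in $\Omega_4(X,x_0)$, yet is not a trace of any surgery. The paper sidesteps all of this by leaving $M$ untouched. It chooses a CW structure on $(M,\partial M)$ with a single $n$-cell, observes that the connecting map $S^n\simeq M/M^{[n-1]}\to\Sigma(M^{[n-1]}/M^{[n-2]})$ is null-homotopic (degree zero and $n\geq 2$), and from the resulting cofibre sequence together with $\pi_{<n}(X)=0$ reads off a bijection $\pi_n(X,x_0)\cong[(M,\partial M),(X,x_0)]$. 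This produces an explicit inverse to $h_n$ by pure obstruction theory, with no surgery and no case analysis by dimension.
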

\begin{proof}
  We construct an inverse.  Let $f:(M,\partial M) \to
  (X,x_0)$ represent an element of $\Omega_n(X, x_0)$.  We can assume
  $M$ is connected.  Choose a CW-structure on $(M,\partial M)$ with
  only one top cell $e: D^n \to M^n$.  Let $M^{[n-1]}$ and $M^{[n-2]}$
  denote the skeleta in the chosen structure.  These fit into a
  cofibration sequence
  \begin{equation*}
    M^{[n-1]}/M^{[n-2]} \to
    M/M^{[n-2]} \to M/M^{[n-1]} \to \Sigma( M^{[n-1]}/M^{[n-2]}).
  \end{equation*}
  The last of these maps is a pointed map $S^n \to \vee^k S^n$, where
  $k$ is the number of $n-1$ cells in $(M,\partial M)$.  It must
  induce the zero map in homology, because otherwise we would have
  $H_n(M,\partial M) = 0$.  Hence it is null-homotopic because $n\geq
  2$.  Applying the functor $[-,(X,x_0)]$, pointed homotopy classes of
  maps to $(X,x_0)$, then gives a isomorphisms
  \begin{equation*}
    \pi_n(X,x_0) \to [M/M^{[n-2]},(X,x_0)] \to [(M,\partial M), (X,x_0)]
  \end{equation*}

  Thus $f$ defines an element of $\pi_n(X,x_0)$ which is easily seen
  to depend only on the cobordism class of $f$.  This defines an inverse.
\end{proof}
Notice how surjectivity of the Hurewicz homomorphism is easier than
injectivity.  Surjectivity uses only connectivity estimates, but
injectivity uses a property of the attaching map of the top
dimensional cell in an oriented manifold.

\begin{theorem}
  Let $n \geq 2$.  Let $(X,x_0)$ be a pointed topological space with
  $\pi_1(X,x_0) = 0$ and $\Omega_k(X,x_0) = 0$ for all $k< n$.  Then
  $\pi_k(X, x_0) = 0$ for all $k < n$.
\end{theorem}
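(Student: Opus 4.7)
The plan is a straightforward induction on $k$, using the previous lemma (the bordism Hurewicz isomorphism for the first nonvanishing homotopy group) as the key tool at each step. I would prove by induction on $k$ with $1 \leq k < n$ that $\pi_k(X,x_0) = 0$. The base case $k=1$ is given as a hypothesis of the theorem, so no work is needed there.

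For the inductive step, suppose $2 \leq k < n$ and $\pi_j(X,x_0) = 0$ for all $j < k$. Since $k \geq 2$, the hypotheses of the preceding lemma are satisfied (with $n$ in the lemma replaced by $k$), so the Hurewicz-type map
\begin{equation*}
  h_k : \pi_k(X,x_0) \to \Omega_k(X,x_0)
\end{equation*}
is an isomorphism. By the hypothesis of the theorem, $\Omega_k(X,x_0) = 0$, so $\pi_k(X,x_0) = 0$, completing the induction.

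There is essentially no obstacle here: all the geometric work has been absorbed into the previous lemma, whose proof uses the CW-structure on $(M,\partial M)$ together with the fact that the attaching map of the top cell in an oriented closed manifold is null-homotopic after stabilization (when $M$ has one top cell and the boundary is the basepoint). The only thing to check carefully is that the inductive hypothesis lines up correctly with the hypothesis of the preceding lemma at each step, and that we do not try to invoke the lemma for $k = 1$ (where simply connectivity is fed in directly by assumption instead). If desired, one could phrase the conclusion even more strongly: under the same hypotheses, $h_n$ itself is an isomorphism, which combined with the lemma gives the bordism Hurewicz theorem in the form usually stated. For the purposes of the theorem as stated, the simple induction above suffices.
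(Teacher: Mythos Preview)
Your proposal is correct and matches the paper's own proof, which is simply the one-line remark ``This follows from the previous lemma by induction.'' You have just spelled out that induction carefully, including the observation that the base case $k=1$ is the simply-connectedness hypothesis and that the inductive step applies the preceding lemma with $n$ replaced by $k$.
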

\begin{proof}
  This follows from the previous lemma by induction.
\end{proof}
\begin{proof}[Proof of bordism Hurewicz theorem]
  Assume $f: X \to Y$ is a map of simply connected spaces that is a
  bordism isomorphism.  Let $C(f)$ be the mapping cone and $x \in
  C(f)$ the cone point.  It follows from the long exact sequence in
  oriented bordism that $\Omega_*(C(f), x) = 0$.  Hence by the
  previous theorem (using that $C(f)$ is simply connected) we get that
  $C(f)$ is weakly contractible.  Therefore $H_*(C(f), x) = 0$, so $f$
  is a homology isomorphism and hence a weak equivalence by the
  classical Hurewicz theorem.
\end{proof}
 \nocite{*}
%\clearpage

\end{document}